%%%%%%%%%%%%%%%%%%%%%%%%%%%%%%%%%%%%%%%%%%%%%%
%
% copyright: A. Dimakis and F. Mueller-Hoissen
%            2011
% 
%%%%%%%%%%%%%%%%%%%%%%%%%%%%%%%%%%%%%%%%%%%%%%
\documentclass[11pt,twoside]{article}
\textheight23cm
\textwidth16.5cm
\evensidemargin-0.cm
\oddsidemargin-0.cm
\topmargin-1.5cm

\usepackage[pdftex]{graphicx}
\usepackage{amssymb}
\usepackage{amsmath}
\usepackage{amsthm}
\usepackage{times,cite,bm}
\usepackage{color}
\usepackage{float}
\usepackage[leftcaption]{sidecap}

%\usepackage{ifthen}
%\usepackage{hyperref}
%\usepackage[all]{hypcap}

%\usepackage{microtype} 
%\usepackage{array}
%\usepackage{booktabs}

%%%%% abbreviations %%%%%%%%%%%%%%%%%%%%%%
\newcommand{\be}{\begin{eqnarray}}
\newcommand{\ee}{\end{eqnarray}}
\newcommand{\bez}{\begin{eqnarray*}}
\newcommand{\eez}{\end{eqnarray*}}
%%%%%%%%%%%%%%%%%%%%%%%%%%%%%%%%%%%%%%%%%%

\theoremstyle{plain}
\newtheorem{thm}{Theorem}[section]

\newtheorem{proposition}[thm]{Proposition}

\newtheorem{lemma}[thm]{Lemma}
\newtheorem{corollary}[thm]{Corollary}

\theoremstyle{definition}

\newtheorem{definition}[thm]{Definition}
\theoremstyle{remark}

\newtheorem{remark}[thm]{Remark}
\newtheorem{example}[thm]{Example}

\title{KP solitons, higher Bruhat and Tamari orders} 

\author{ {\scshape Aristophanes Dimakis} \\
 Department of Financial and Management Engineering, \\
 University of the Aegean, 41, Kountourioti Str., GR-82100 Chios, Greece \\
 E-mail: \emph{dimakis@aegean.gr}
             \and
 {\scshape Folkert M\"uller-Hoissen} \\
 Max-Planck-Institute for Dynamics and Self-Organization \\
 Bunsenstrasse 10, D-37073 G\"ottingen, Germany \\
 E-mail: \emph{folkert.mueller-hoissen@ds.mpg.de} 
       }

\date{ }

\begin{document}

\maketitle

\begin{abstract}
In a tropical approximation, any tree-shaped line soliton solution, a member of  
the simplest class of soliton solutions of the Kadomtsev-Petviashvili (KP-II) equation, 
determines a chain of planar rooted binary trees, connected by right rotation. More precisely, 
it determines a maximal chain of a Tamari lattice. 
We show that an analysis of these solutions naturally involves 
higher Bruhat\index{order!higher Bruhat} and higher Tamari orders\index{order!higher Tamari}.
\end{abstract}

\section{Introduction}
\setcounter{equation}{0}
Waves on a fluid surface show a very complex behavior in general. Only under special 
circumstances can we expect to observe a more regular pattern. For shallow water waves,  
the Kadomtsev-Petviashvili (KP) equation\index{KP!equation} 
\bez
      (-4 \, u_t + u_{xxx} + 6 \, u u_x )_x + 3 \, u_{yy} = 0
\eez
(where e.g. $u_t = \partial u/\partial t$) 
provides an approximation under the conditions 
that the wave dominantly travels in the $x$-direction, the wave length is long as 
compared with the water depth, and the effect of the nonlinearity is about the same order 
as that of dispersion.\footnote{The physical form of this equation 
is obtained by suitable rescalings of $x,y,t$ and $u$, involving physical parameters. } 
More precisely, this is the KP-II equation\index{KP!equation}, but we will write KP, for short. 
It generalizes the famous Korteweg-deVries (KdV) equation, 
which describes waves moving in only one spatial dimension. 
Although the KdV equation is much better established as an approximation of the more  
general water wave equations, recent studies also confirm the physical relevance of 
KP\index{KP!equation} \cite{DM_Kodama10}. 

In \cite{DM_DMH11KPT} we studied the soliton solutions of the KP equation\index{KP!equation} in a 
\emph{tropical}\index{tropical} approximation, which reduces them to networks 
formed by line segments in the $xy$-plane, evolving in time $t$. 
A subclass corresponds to evolutions (in time $t$) in the set of (planar) rooted binary trees\index{tree!planar rooted binary}. 
At transition events, the binary tree type changes (through a tree that 
is not binary). 
It turned out that the time evolution is simply given by right rotation in a tree, 
and the solution evolves according to a maximal chain of a 
\emph{Tamari lattice}\index{Tamari!lattice}
\cite{DM_Tamari1951thesis,DM_Tamari62}, see Figure~\ref{DM_fig:T4}. 
\begin{figure}[H] 
\begin{center}
\includegraphics[scale=.9]{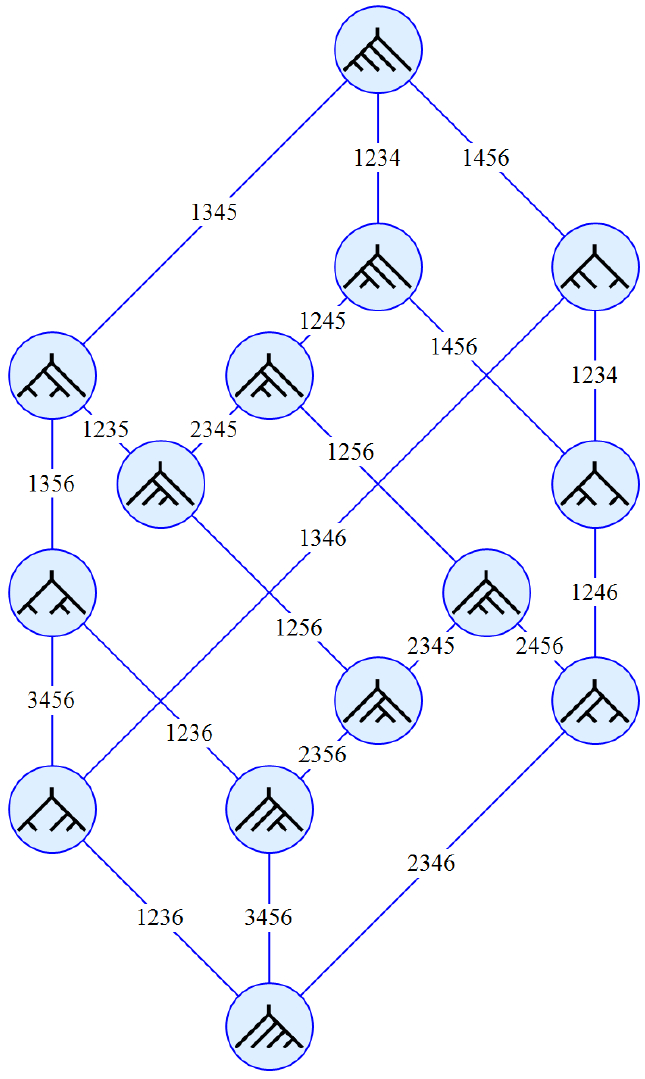} 
\parbox{15cm}{
\caption{The Tamari lattice\index{Tamari!lattice} $\mathbb{T}_4$ in terms of rooted binary trees. 
The top node shows a left comb tree that represents the structure of a certain KP line 
soliton\index{KP!soliton} family (with six asymptotic branches in the $xy$-plane) as $t \to -\infty$. 
A label $ijkl$ assigned to an edge indicates the transition time $t_{ijkl}$ at which the soliton 
graph changes its tree type via a `rotation' (see Section~\ref{DM_sec:tropical}). 
The values of the parameters, on which the solutions depend, determine the linear order of 
the `critical times' $t_{ijkl}$, and thus decide which chain is realized. 
The bottom node shows a right comb tree, which represents the tree type of the soliton as 
$t \to \infty$. The special family of solutions thus splits into classes corresponding to 
the maximal chains of $\mathbb{T}_4$. For each Tamari lattice\index{Tamari!lattice}, 
there is a family of KP line solitons\index{KP!soliton} that realizes its maximal chains in this way. \label{DM_fig:T4} }
}
\end{center} 
\end{figure} 

In this realization of Tamari lattices\index{Tamari!lattice}, the underlying set consists of 
states of a physical system, here the tree-types of a soliton configuration in the $xy$-plane. 
The Tamari poset\index{Tamari!lattice} (partially ordered set) structure describes the possible ways in 
which these states are allowed 
to evolve in time, starting from an initial state (the top node) and ending in a 
final state (the bottom node). 

Figure~\ref{DM_fig:rotation} displays a solution evolving via 
a tree rotation, and further provides an idea how this can be understood 
in terms of an arrangement of planes in three-dimensional space-time (after idealizing 
line soliton branches to lines in the $xy$-plane, see Section~\ref{DM_sec:tropical}). 

\begin{figure}
\begin{center}
\includegraphics[scale=.5]{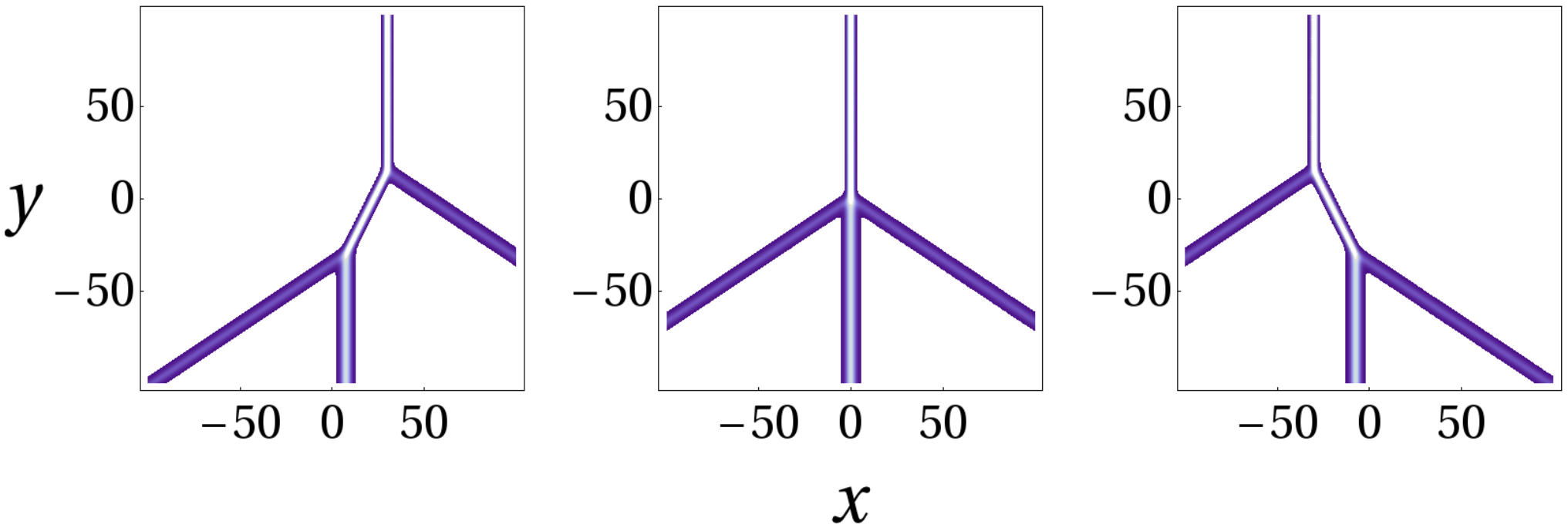} 
\hspace{.6cm}
\includegraphics[scale=.42]{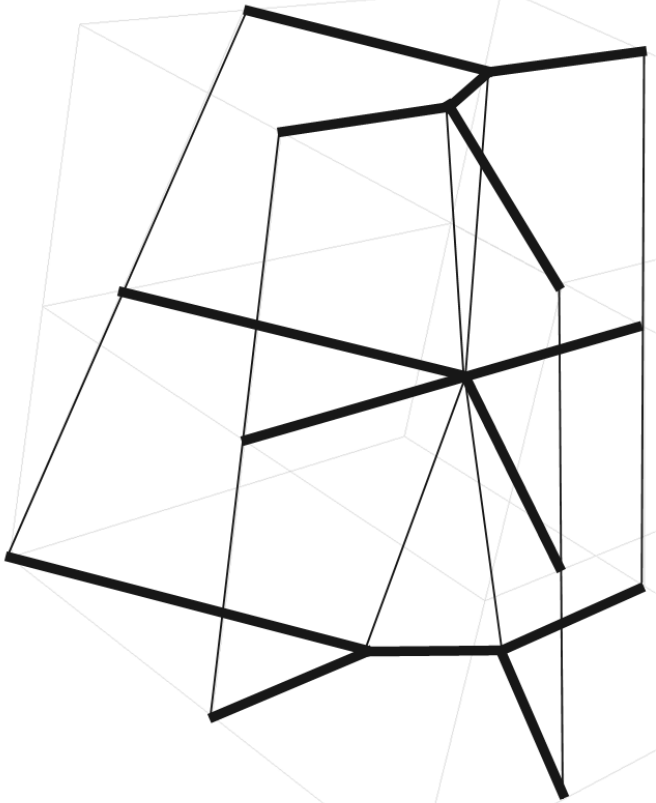}  
\parbox{15cm}{ 
\caption{Density plots of a line soliton solution at three successive times, exhibiting 
a tree rotation. To the right is a corresponding space-time view in terms of intersecting planes 
(here time flows upward). \label{DM_fig:rotation} }
}
\end{center} 
\end{figure} 

In this work we show that the classification of possible evolutions of tree-shaped 
KP line solitons\index{KP!soliton}  
involves \emph{higher Bruhat orders}\index{order!higher Bruhat} 
 \cite{DM_Manin+Schecht86a,DM_Manin+Shekhtman86b,DM_Manin+Schecht89,DM_Ziegler93}. 
Moreover, we are led to associate with each higher Bruhat order a 
\emph{higher Tamari order}\index{order!higher Tamari} 
via a surjection, in a way different from what has been considered previously. There is some 
evidence that our higher Tamari orders coincide with `higher Stasheff-Tamari posets' 
introduced by Kapranov and Voevodsky \cite{DM_Kapr+Voev91} (also see \cite{DM_Edel+Rein96}), 
but a closer comparison will not be undertaken in this work. 

In Section~\ref{DM_sec:KP_line_solitons}, we briefly describe the general class of 
KP soliton\index{KP!soliton} 
solutions. In Section~\ref{DM_sec:tropical}, we concentrate on the abovementioned subclass 
of tree-shaped solutions, in the tropical\index{tropical} approximation, and somewhat improve 
results in \cite{DM_DMH11KPT}.
Section~\ref{DM_sec:hBT} recalls results about higher Bruhat orders\index{order!higher Bruhat} and 
extracts from the analysis of tree-shaped KP line solitons\index{KP!soliton} a reduction to 
higher Tamari orders\index{order!higher Tamari}. 
Section~\ref{DM_sec:polygonal_rels} proposes a hierarchy of monoids that
expresses the hierarchical structure present in the KP soliton\index{KP!soliton} problem. 
This makes contact with \emph{simplex equations}\index{simplex equation}
\cite{DM_Zamolodchikov81,DM_Bazh+Stro82,DM_Frenkel+Moore91,DM_Lawrence97} 
and provides us with an algebraic method 
to construct higher Bruhat and higher Tamari orders. 
Section~\ref{DM_sec:remarks} contains some additional remarks. 
Throughout this work, Hasse diagrams\index{Hasse diagram} of posets will be 
displayed upside down (i.e. with the lowest element(s) at the top).

\section{KP solitons}
\label{DM_sec:KP_line_solitons}
The line soliton solutions of the KP-II equation\index{KP!equation} are parametrized by 
the totally non-negative 
Grassmannians $\mathrm{Gr}^{\geq}_{n,M+1}$ \cite{DM_Kodama10}, which is easily recognized in 
the Wronskian form of the solutions. Translating the KP equation\index{KP!equation} via
\bez
         u = 2 \, \log(\tau)_{xx} \, ,
\eez
into a bilinear equation in the variable $\tau$, these solutions are given by 
\bez
      \tau = f_1 \wedge f_2 \wedge \cdots \wedge f_n \, ,
\eez
where
\bez
    f_i = \sum_{j=1}^{M+1} a_{ij} \; e_j \, , \qquad
    e_j = e^{\theta_j} \, , \qquad
    \theta_j = \sum_{r = 1}^M p_j^r \; t^{(r)} + c_j  \; .
\eez
Here $t^{(r)}$, $r=1,\ldots,M$, are independent real variables that may be regarded as coordinates 
on $\mathbb{R}^M$, and we set $t^{(1)}=x$, $t^{(2)}=y$, $t^{(3)}=t$. The variables $t^{(r)}$, $r>3$, 
are the additional evolution variables that appear in the \emph{KP hierarchy}\index{KP!hierarchy}, which extends 
the KP equation\index{KP!equation} to an infinite set of compatible PDEs. 
Furthermore, $p_j,c_j,a_{ij}$ are real constants, and without restriction of generality 
we can and will assume that
\bez
      p_1 < p_2 < \cdots < p_{M+1} \; .  
\eez
The exterior product on the space of functions generated by the exponential functions 
$e_j$, $j=1,\ldots,M+1$, is defined by
\bez
    e_{i_1} \wedge \cdots \wedge e_{i_m} 
 = \Delta(p_{i_1},\ldots,p_{i_m}) \, e_{i_1} \cdots e_{i_m} \, ,
\eez
with the Vandermonde determinant\index{Vandermonde determinant} 
\bez
     \Delta(p_{i_1},\ldots,p_{i_m}) 
   = \left| \begin{array}{cccc} 1 & p_{i_1} & \cdots & p_{i_1}^{m-1} \\
                   1 & p_{i_2} & \cdots & p_{i_2}^{m-1}  \\
                   \vdots & \vdots & \cdots & \vdots \\
                   1 & p_{i_m} & \cdots & p_{i_m}^{m-1} \end{array} \right|
   = \prod_{1 \leq r < s \leq m} ( p_{i_s} - p_{i_r} )  \; .  
\eez
Now we can express $\tau$ as
\bez
    \tau = \sum_{I \in {[M+1] \choose n}} A_I \, \Delta(p_I) \, e_I \, , 
\eez
where $[m] = \{1,2,\ldots,m\}$, and ${[m] \choose n}$ denotes the set of $n$-element 
subsets of $[m]$. 
Numbering the elements of a subset $I=\{i_1,\ldots,i_n\}$ such that  
$i_1 < \cdots < i_n$, we set $\Delta(p_I) = \Delta(p_{i_1},\ldots,p_{i_n})$. 
Finally, $A_I$ denotes the maximal minor with columns $i_1, \ldots, i_n$ of the matrix
\bez
    A = \left( \begin{array}{ccc} a_{1,1} & \cdots & a_{1,M+1} \\
                \vdots & \ddots & \vdots \\
               a_{n,1} & \cdots & a_{n,M+1} 
               \end{array} \right) \; .
\eez 
For regular (soliton) solutions, the Pl\"ucker coordinates\index{Pl\"ucker coordinates}
$A_I$ have to be 
\emph{non-negative} real numbers (and at least one has to be different from zero). 
In the following we concentrate on the subclass of solutions parametrized by 
$\mathrm{Gr}^{\geq}_{1,M+1}$, i.e.
\be
    \tau = e_1 + \cdots + e_{M+1} \, ,  \label{DM_tau_simple_class}
\ee
where we absorbed the positive constants $a_{1,j}$ into the constants $c_j$. 
To good approximation, a \emph{general} line soliton solution can be understood as a 
\emph{superimposition} of solutions from the subclass (see \cite{DM_DMH11KPT}).

\section{Tropical approximation of a subclass of KP line solitons}
\label{DM_sec:tropical}
Let us fix $M \in \mathbb{N}$, constants $p_1 < p_2 < \cdots < p_{M+1}$ 
and $c_i$, $i=1,\ldots,M+1$. 
The behavior of $\tau \, : \, \mathbb{R}^M \to \mathbb{R}$, given by (\ref{DM_tau_simple_class}), 
is best understood in a tropical\index{tropical} approximation of $\log(\tau)$. 
In a region where some phase, say $\theta_i$, \emph{dominates} all others, i.e. 
$\theta_i > \theta_j$ for all $j \neq i$, we have
\bez
    \log(\tau) = \theta_i + \log\Big( 1 + \sum_{j=1 \atop j \neq i}^{M+1} e^{-(\theta_i-\theta_j)} \Big) 
            \simeq \theta_i \; .
\eez
As a consequence, 
\bez
    \log(\tau) \simeq \max\{\theta_1, \ldots, \theta_{M+1} \} \, ,
\eez
where the right hand side can be regarded as a \emph{tropical}\index{tropical} version of $\log(\tau)$. 
Sufficiently away from the boundary of a dominating-phase region, $\log(\tau)$ is linear 
in $x$, so that $u$ vanishes. A crucial observation is that a line soliton branch in the $xy$-plane, 
for fixed $t^{(r)}$, $r>2$, corresponds to a boundary line between two dominating-phase 
regions. Viewing it in space-time, by regarding $t$ as a coordinate of an additional dimension, 
or more generally 
in the extended space $\mathbb{R}^M$ by adding dimensions corresponding to the evolution variables 
$t^{(r)}$, $r=3,\ldots,M$, the boundary consists piecewise of affine hyperplanes. 
Let $\mathcal{U}_i$ denote the region where $\theta_i$ is not dominated by any other phase, i.e. 
\be
    \mathcal{U}_i 
 = \{ \mathbf{t} \in \mathbb{R}^M \, | \, \max\{\theta_1, \ldots, \theta_{M+1} \} 
     = \theta_i \} 
 = \bigcap_{k \neq i}  \{ \mathbf{t} \in \mathbb{R}^M \, | \, \theta_k \leq \theta_i \}  
    \; . \label{DM_cU_i}
\ee
In the tropical\index{tropical} approximation, a description of KP line solitons\index{KP!soliton}
 amounts to an ana\-lysis of intersections of such regions, i.e.
\bez
    \mathcal{U}_I = \mathcal{U}_{i_1} \cap \cdots \cap \mathcal{U}_{i_n} \, , \quad
    I = \{i_1,\ldots,i_n\} \in {\Omega \choose n} \, , \quad  
    \Omega := [M+1] = \{1,\ldots,M+1\} \; .
\eez
This is a subset of the affine space 
\bez
  \mathcal{P}_I = \{ \mathbf{t} \in \mathbb{R}^M \, | \, \theta_{i_1} 
                        = \cdots = \theta_{i_n} \} 
             \qquad \qquad n >1 \, ,
\eez
which is easy to deal with (see below). It is more difficult to determine which parts of 
$\mathcal{P}_I$ are \emph{visible}, i.e. belong to $\mathcal{U}_I$.
Fixing the values of $t^{(r)}$, $r>2$, determines 
a line soliton segment in the $xy$-plane if $n=2$, and a meeting point of $n$ such segments 
if $n>2$.\footnote{For generic values of $t^{(r)}$, $r>2$, we see line soliton segments 
and meeting points of \emph{three} segments in the $xy$-plane. Meeting points of more than 
three segments only occur for special values. } 
In order to decide about visibility, i.e. whether a point of $\mathcal{P}_I$ lies 
in $\mathcal{U}_I$, a formula is needed to compare the values 
of all phases at this point, see (\ref{DM_theta-diff-on-Pred}) below.

Let us first look at $\mathcal{P}_I$ in more detail. Introducing a real \emph{auxiliary variable} 
$t^{(0)}$, the equation $\theta_{i_1} = \cdots = \theta_{i_n} = -t^{(0)}$ results in the 
linear system\footnote{We note that the full set of equations
$t^{(0)} + p_i \, t^{(1)} + p_i^2 \, t^{(2)} + \cdots + p_i^M \, t^{(M)} = -c_i$, 
$i=1,\ldots, M+1$, defines a \emph{cyclic hyperplane arrangement} \cite{DM_Ziegler93} in $\mathbb{R}^{M+1}$ 
with coordinates $t^{(0)}, \ldots, t^{(M)}$. }
\bez
   t^{(0)} + p_{i_j} \, t^{(1)} + p_{i_j}^2 \, t^{(2)} + \cdots + p_{i_j}^{n-1} \, t^{(n-1)}
   = - \tilde{c}_{i_j}  \, , \qquad
   j=1,\ldots, n \, ,
\eez
where
\bez
    \tilde{c}_{i_j} = c_{i_j} + p_{i_j}^{n} \, t^{(n)} + \cdots + p_{i_j}^{M} \, t^{(M)} \; .
\eez
This fixes the first $n-1$ coordinates as linear functions of the remaining coordinates, 
$t_I^{(k)} = t_I^{(k)}(t^{(n)},\ldots,t^{(M)})$, $k=1,\ldots,n-1$. In particular, we obtain
(also see \cite{DM_DMH11KPT}, Appendix~A)
\be
    t_I^{(n-1)} &=& - \sum_{r=1}^{M+1-n} h_r(p_I) \, t^{(n+r-1)} - c_I  \nonumber \\
                &=& - (p_{i_1} + \cdots + p_{i_n}) \, t^{(n)} - \tilde{c}_I \, ,
                   \label{DM_t_I^n-1__h_r}
\ee
where $h_r(p_I) = h_r(p_{i_1},\ldots,p_{i_n})$ is the $r$-th \emph{complete symmetric polynomial} 
\cite{DM_Macd95} in the variables $p_{i_1},\ldots,p_{i_n}$, and
\bez
  c_I = \frac{1}{\Delta(p_I)} \sum_{s=1}^n (-1)^{n-s} \, c_{i_s} \, \Delta(p_{I\setminus \{i_s\}})
          \, , \qquad
  \tilde{c}_I = \sum_{r=2}^{M+1-n} h_r(p_I) \, t^{(n+r-1)} + c_I \; .
\eez 
We note that $\tilde{c}_I$ depends on $t^{(n+1)},\ldots, t^{(M)}$. 
Here are some immediate consequences:
\begin{itemize}
\item Since obviously $\mathcal{P}_I \subset \mathcal{P}_J$ for $J \subset I$, on $\mathcal{P}_I$ we have
\bez
    t_I^{(n-2)} = t_{I \setminus \{i_n\}}^{(n-2)}(t_I^{(n-1)}, t^{(n)},\ldots,t^{(M)}) \, ,
\eez
and corresponding expressions for $t_I^{(r)}$, $r=1,\ldots,n-3$. 
\item $\mathcal{P}_\Omega$ is a common point of all $\mathcal{P}_I$, $I \subset \Omega$. 
According to (\ref{DM_t_I^n-1__h_r}), on $\mathcal{P}_\Omega$ we have
\bez
      t_\Omega^{(M)} = - c_\Omega \; .
\eez
Clearly, $\mathcal{P}_\Omega = \mathcal{U}_\Omega$, and is thus visible. 
\item The hyperplane $\mathcal{P}_{\{i_1,i_2\}}$ is given by 
\bez
     t^{(1)}_{\{i_1,i_2\}} = - (p_{i_1} + p_{i_2}) \, t^{(2)} - \tilde{c}_{\{i_1,i_2\}} \, ,
\eez
and we have 
\be
    \theta_{i_2} - \theta_{i_1} = (p_{i_2} - p_{i_1}) \, (t^{(1)} - t^{(1)}_{\{i_1,i_2\}}) \, ,
        \label{DM_theta_diff_12}
\ee
so that, for $i_1 < i_2$,
\bez
    t^{(1)} \lessgtr t^{(1)}_{\{i_1,i_2\}} 
    \quad \Longleftrightarrow \quad \theta_{i_2} \lessgtr \theta_{i_1} \; .
\eez
Together with (\ref{DM_cU_i}), this implies in particular that each $\mathcal{U}_i$ is the intersection 
of half-spaces, and thus a closed convex set. None of these sets is empty since they 
all contain $\mathcal{U}_\Omega$. It follows in turn that each set $\mathcal{U}_I$ (with non-empty $I$) is 
non-empty, closed and convex (and thus in particular connected). 
\end{itemize}

\begin{figure}[H] 
\begin{center}
\includegraphics[scale=.45]{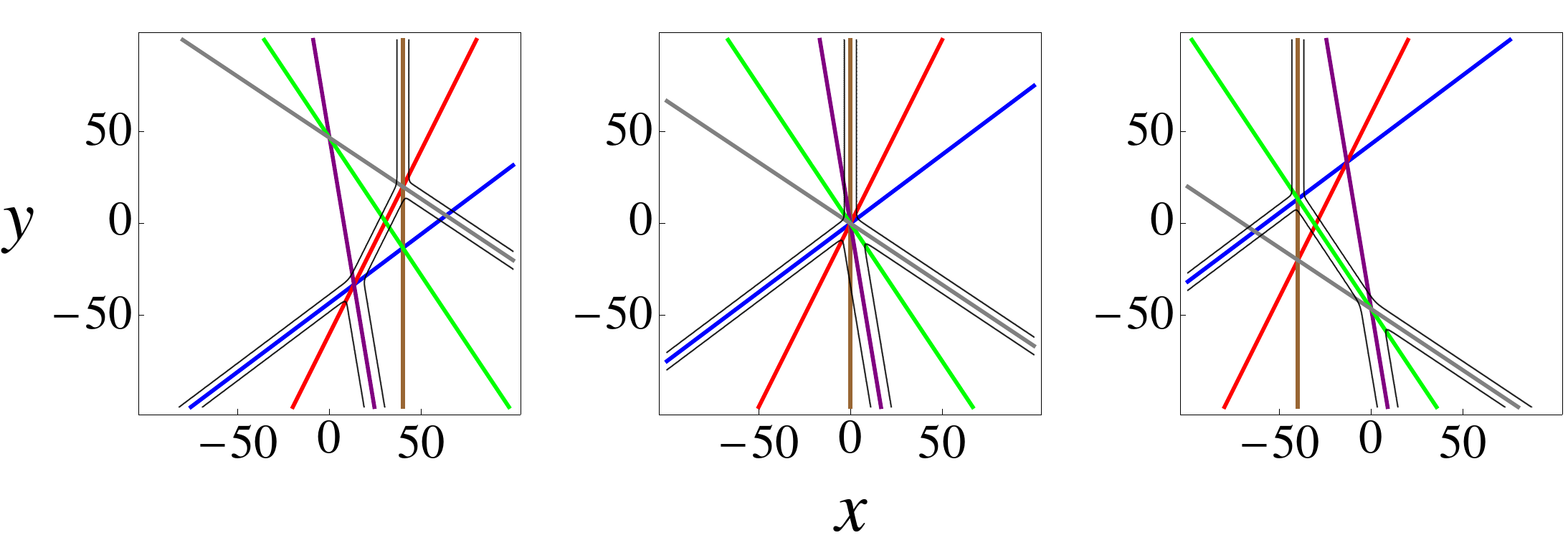} 
\parbox{15cm}{
\caption{A soliton solution with $M=3$, hence $\Omega = \{1,2,3,4\}$, at times 
$t<t_\Omega$, $t=t_\Omega$ and $t>t_\Omega$. A thin line is the coincidence of two phases. 
It corresponds to some $\mathcal{P}_{\{i,j\}}$ restricted to the respective value of $t$. 
Only a thick part of such a line is visible at the respective value of time. It 
corresponds to some $\mathcal{U}_{\{i,j\}}$, restricted to that value of $t$. 
The left and also the right plot shows two visible coincidences of three phases, corresponding 
to points in some $\mathcal{U}_{\{i,j,k\}}$. They coincide in the middle plot to form 
a visible four phase coincidence, the point $\mathcal{U}_\Omega$. \label{DM_fig:M3plot} }
}
\end{center} 
\end{figure}

\begin{lemma}[\cite{DM_DMH11KPT}, Proposition~A.3]
For $K=\{k_1,\ldots,k_{n+1}\}$, $n \in [M]$, we have\footnote{For $n=1$, this is 
(\ref{DM_theta_diff_12}), since $t^{(0)}_i = -\theta_i$. }
\bez
   t^{(n-1)}_{K \setminus \{k_j\}} - t^{(n-1)}_{K \setminus \{k_l\}}
 = (p_{k_j} - p_{k_l})(t^{(n)} - t^{(n)}_K) 
   \qquad \quad j,l \in \{1,\ldots,n+1\}   \; .
\eez
\end{lemma}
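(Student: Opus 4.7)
The plan is to expand every term using the explicit formula (\ref{DM_t_I^n-1__h_r}). First I would apply (\ref{DM_t_I^n-1__h_r}) to each of the two $n$-element sets $K \setminus \{k_j\}$ and $K \setminus \{k_l\}$ and subtract, writing the left-hand side as an affine function of $t^{(n)}, \ldots, t^{(M)}$ whose coefficients are differences of the complete symmetric polynomials $h_r(p_{K\setminus\{\cdot\}})$ and of the constants $c_{K\setminus\{\cdot\}}$. Then I would apply (\ref{DM_t_I^n-1__h_r}) with $n$ replaced by $n+1$ to the $(n+1)$-element set $K$, which puts $t^{(n)}_K$, and hence the right-hand side, in the same form. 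The identity then reduces to a coefficient comparison.

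The coefficient comparison for each $t^{(n+r-1)}$, $r = 1,\ldots,M+1-n$, reduces to the single identity
\bez
   h_r(p_{K \setminus \{k_l\}}) - h_r(p_{K \setminus \{k_j\}}) = (p_{k_j} - p_{k_l}) \, h_{r-1}(p_K) \qquad (h_0 = 1),
\eez
which I would derive from the generating function $\sum_{r \geq 0} h_r(p_J) \, z^r = \prod_{k \in J} (1 - p_k z)^{-1}$: applied with $J = K$ and $J = K \setminus \{k\}$ it gives $h_r(p_{K \setminus \{k\}}) = h_r(p_K) - p_k \, h_{r-1}(p_K)$, and subtracting the cases $k = k_j$ and $k = k_l$ yields the displayed identity. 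The case $r = 1$ is then the straightforward observation that $h_1$ of an $n$-element set changes by $p_{k_j} - p_{k_l}$ when the removed element is switched.

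The step I expect to be less routine is matching the constant terms, that is establishing
\bez
   c_{K \setminus \{k_l\}} - c_{K \setminus \{k_j\}} = (p_{k_j} - p_{k_l}) \, c_K .
\eez
The cleanest route is to recognise the formula for $c_I$ quoted in the excerpt as the Lagrange representation of the divided difference of the function $p_i \mapsto c_i$ at the nodes $\{p_i : i \in I\}$; the factorisation $\prod_{t \neq s} (p_{i_s} - p_{i_t}) = (-1)^{n-s} \, \Delta(p_I) / \Delta(p_{I \setminus \{i_s\}})$ converts the definition of $c_I$ into the standard divided-difference formula. The desired identity is then the symmetric two-argument recurrence for divided differences applied to $c[p_K]$, singling out the nodes $p_{k_j}$ and $p_{k_l}$ among the arguments. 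This constant-term identity is the main obstacle, but the divided-difference viewpoint makes it transparent, and assembling the three matchings completes the proof.
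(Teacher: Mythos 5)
Your proposal is correct. Note that the paper itself gives no proof of this lemma --- it is quoted from Proposition~A.3 of \cite{DM_DMH11KPT} --- so there is no in-paper argument to compare against; but your derivation is a valid, self-contained one using only the formula (\ref{DM_t_I^n-1__h_r}) that the paper does supply. All three coefficient matchings check out: the identity $h_r(p_{K\setminus\{k\}})=h_r(p_K)-p_k\,h_{r-1}(p_K)$ follows from the generating function exactly as you say, the $r=1$ case gives the coefficient $p_{k_j}-p_{k_l}$ of $t^{(n)}$ (consistent with $t^{(n)}_K$ being independent of $t^{(n)}$), and the constant-term identity $c_{K\setminus\{k_l\}}-c_{K\setminus\{k_j\}}=(p_{k_j}-p_{k_l})\,c_K$ is indeed the symmetric divided-difference recurrence once one observes that $(-1)^{n-s}\Delta(p_{I\setminus\{i_s\}})/\Delta(p_I)=1/\prod_{t\neq s}(p_{i_s}-p_{i_t})$, so that $c_I$ is the divided difference of $p_i\mapsto c_i$ at the nodes $p_I$.
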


With the help of this important lemma, we obtain the following result. 

\begin{proposition}
\label{DM_prop:Bruhat}
If $K=\{k_1,\ldots,k_{n+1}\}$ is in linear order, i.e. $k_1 < k_2 < \cdots < k_{n+1}$, then 
\be
  \begin{array}{l}
   t^{(n-1)}_{K \setminus \{k_{n+1}\}} < t^{(n-1)}_{K \setminus \{k_n\}} < \cdots 
   < t^{(n-1)}_{K \setminus \{k_1\}}  \\[2ex]
   t^{(n-1)}_{K \setminus \{k_1\}} < t^{(n-1)}_{K \setminus \{k_2\}} < \cdots 
   < t^{(n-1)}_{K \setminus \{k_{n+1}\}} 
   \end{array} 
   \quad \mbox{for} \quad
   \begin{array}{l} 
     t^{(n)} < t^{(n)}_K \\[2ex]
     t^{(n)} > t^{(n)}_K \; .
   \end{array}     \label{DM_t^n-1_order}
\ee
\end{proposition}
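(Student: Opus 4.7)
The plan is to apply the preceding lemma directly to pairs of consecutive indices. Specifically, I would set $j = l+1$ in the lemma to obtain
\[
 t^{(n-1)}_{K \setminus \{k_{l+1}\}} - t^{(n-1)}_{K \setminus \{k_l\}}
 = (p_{k_{l+1}} - p_{k_l})\,(t^{(n)} - t^{(n)}_K)
\]
for every $l \in \{1, \ldots, n\}$. Because $K$ is in linear order, $k_l < k_{l+1}$, and the standing assumption $p_1 < p_2 < \cdots < p_{M+1}$ forces the prefactor $p_{k_{l+1}} - p_{k_l}$ to be strictly positive. So the sign of each adjacent difference $t^{(n-1)}_{K \setminus \{k_{l+1}\}} - t^{(n-1)}_{K \setminus \{k_l\}}$ is controlled entirely by the sign of $t^{(n)} - t^{(n)}_K$.

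From here the two cases are symmetric. If $t^{(n)} < t^{(n)}_K$, every adjacent difference is strictly negative, so $t^{(n-1)}_{K \setminus \{k_{l+1}\}} < t^{(n-1)}_{K \setminus \{k_l\}}$ for each $l$, and reading the chain from $l=n$ down to $l=1$ produces the first ordering claimed. If $t^{(n)} > t^{(n)}_K$, each adjacent difference is strictly positive, yielding the reverse chain and hence the second ordering. The transitivity needed to string the inequalities into a single chain is immediate.

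I do not expect any serious obstacle: the hard work has already been done in proving the lemma. The only thing to verify carefully is that the formula from the lemma is antisymmetric in $j \leftrightarrow l$, so there is no ambiguity about which way to orient the difference when passing to consecutive indices of $K$, and that the set $K \setminus \{k_j\}$ inherits the linear ordering from $K$ consistently, so that each $t^{(n-1)}_{K \setminus \{k_j\}}$ is well defined in the sense of the construction preceding \eqref{DM_t_I^n-1__h_r}. Both are straightforward bookkeeping.
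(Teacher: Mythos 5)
Your proof is correct and is exactly the argument the paper intends: the proposition is stated as an immediate consequence of the preceding lemma, and specializing that lemma to consecutive indices $j=l+1$, using $p_{k_l}<p_{k_{l+1}}$ to fix the sign of the prefactor, and chaining the resulting inequalities is precisely how it follows. No further comment is needed.
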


The first chain in (\ref{DM_t^n-1_order}) is in 
\emph{lexicographic order}\index{order!lexicographic}, the second in 
\emph{reverse lexicographic order}, with respect to the index sets. 
This makes contact with \emph{higher Bruhat orders}\index{order!higher Bruhat}, see Section~\ref{DM_sec:hBT}. 
The following result is crucial for determining (non-)visible events. 

\begin{proposition}[\cite{DM_DMH11KPT}, Corollary~A.6] 
Let $I=\{i_1,\ldots, i_n\}$ and $k \in \Omega \setminus I$. On $\mathcal{P}_I$ we have
\be
    \theta_k - \theta_{i_1} = (p_k - p_{i_1}) \cdots (p_k - p_{i_n})
    \, (t^{(n)} - t^{(n)}_{I \cup \{k\}}) \; .   \label{DM_theta-diff-on-Pred}
\ee
\end{proposition}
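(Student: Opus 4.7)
The plan is to prove the identity by induction on $n = |I|$, using Proposition~\ref{DM_prop:Bruhat}'s predecessor (the stated Lemma) as the essential geometric input at the inductive step.

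For the base case $n=1$, the set $\mathcal{P}_I = \mathcal{P}_{\{i_1\}}$ imposes no constraint, so the asserted formula
\[
\theta_k - \theta_{i_1} = (p_k - p_{i_1})\bigl(t^{(1)} - t^{(1)}_{\{i_1,k\}}\bigr)
\]
is exactly the identity (\ref{DM_theta_diff_12}) (with $i_2$ relabeled as $k$). This is a direct consequence of the explicit formula for $t^{(1)}_{\{i_1,i_2\}}$ given just before (\ref{DM_theta_diff_12}), combined with the definition of $\theta_i$; a short calculation using $(p_k-p_{i_1})\,h_r(p_{i_1},p_k) = p_k^{r+1}-p_{i_1}^{r+1}$ and the formula for $c_{\{i_1,k\}}$ verifies it.

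For the inductive step, assume the formula holds for index sets of size $n-1$, and let $I = \{i_1,\ldots,i_n\}$, $k \in \Omega \setminus I$. Set $I' = I \setminus \{i_n\}$. Since $\mathcal{P}_I \subset \mathcal{P}_{I'}$, I apply the induction hypothesis on $\mathcal{P}_{I'}$ twice: once with the outside index $k$, giving
\[
\theta_k - \theta_{i_1} = \prod_{j=1}^{n-1}(p_k - p_{i_j})\cdot\bigl(t^{(n-1)} - t^{(n-1)}_{I' \cup \{k\}}\bigr),
\]
and once with $i_n$ playing the role of the outside index, giving
\[
\theta_{i_n} - \theta_{i_1} = \prod_{j=1}^{n-1}(p_{i_n} - p_{i_j})\cdot\bigl(t^{(n-1)} - t^{(n-1)}_{I}\bigr).
\]
Restricting further to $\mathcal{P}_I$, the defining condition $\theta_{i_n} = \theta_{i_1}$ and the non-vanishing of the product $\prod_{j=1}^{n-1}(p_{i_n}-p_{i_j})$ force $t^{(n-1)} = t^{(n-1)}_I$ on $\mathcal{P}_I$. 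Substituting this into the first displayed equation yields
\[
\theta_k - \theta_{i_1} = \prod_{j=1}^{n-1}(p_k - p_{i_j})\cdot\bigl(t^{(n-1)}_I - t^{(n-1)}_{I' \cup \{k\}}\bigr).
\]

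The final step applies the Lemma to $K = I \cup \{k\} = \{i_1,\ldots,i_n,k\}$ with the choices $k_j = k$ (so that $K \setminus \{k_j\} = I$) and $k_l = i_n$ (so that $K \setminus \{k_l\} = I' \cup \{k\}$). This produces
\[
t^{(n-1)}_I - t^{(n-1)}_{I' \cup \{k\}} = (p_k - p_{i_n})\bigl(t^{(n)} - t^{(n)}_{I \cup \{k\}}\bigr),
\]
and inserting this absorbs the missing factor $(p_k - p_{i_n})$ into the product, completing the induction. The only point requiring care is the bookkeeping of which index is being removed to form the subscript in the Lemma — I expect no deeper obstacle, since everything else is direct substitution.
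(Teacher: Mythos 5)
Your proof is correct. There is nothing in the paper to compare it against: the statement is imported verbatim as Corollary~A.6 of the cited reference \cite{DM_DMH11KPT}, and no argument is reproduced here. Your induction is sound and uses exactly the ingredients this paper does make available: the base case is (\ref{DM_theta_diff_12}); on $\mathcal{P}_I$ the identity $\theta_{i_n}=\theta_{i_1}$ together with the distinctness of the $p_i$ forces $t^{(n-1)}=t^{(n-1)}_I$, which is consistent with how $t^{(n-1)}_I$ is defined; and the missing factor $(p_k-p_{i_n})$ is supplied by the Lemma applied to $K=I\cup\{k\}$ with the pair of removed indices $(k,i_n)$. The bookkeeping point you flag is harmless, since the Lemma's difference formula is antisymmetric in $(k_j,k_l)$ and does not depend on how the elements of $K$ are labelled, and since all phases $\theta_{i_j}$, $j=1,\ldots,n$, coincide on $\mathcal{P}_I$, so the choice of reference index $i_1$ is immaterial.
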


\begin{example}
\label{DM_ex:n=2}
Let $n=2$ and thus $I=\{i,i'\}$ with $i < i'$. 
On $\mathcal{P}_I$, (\ref{DM_theta-diff-on-Pred}) reads 
\bez
   \theta_{k} -  \theta_{i} = (p_k - p_i) (p_k - p_{i'})
    \, (t^{(2)} - t^{(2)}_{\{i,i',k\}}) \; .
\eez
If $I$ is an interval, i.e. $i'=i+1$, we have either $k < i$ or $k > i+1$, 
and thus $\theta_k \lessgtr \theta_i$ iff $t^{(2)} \lessgtr t^{(2)}_{\{i,i+1,k\}}$. 
As a consequence, the part of $\mathcal{P}_{\{i,i+1\}}$ with $t^{(2)} \leq \min_k\{t^{(2)}_{\{i,i+1,k\}}\}$ 
is visible and the part with $t^{(2)} > \min_k\{ t^{(2)}_{\{i,i+1,k\}}\}$ is non-visible.
If $I$ is not an interval, then there is a $k \in \{2,\ldots,M\}$ such that 
$i < k < i'$. 
It follows that $\theta_k > \theta_i$ if $t^{(2)} < t^{(2)}_{\{i,i',k\}}$, so 
the part of $\mathcal{P}_{\{i,i'\}}$ with $t^{(2)} < \max_k\{t^{(2)}_{\{i,i',k\}} \, | \, i<k<i' \}$ is non-visible. 
If there is a $k \in \{1,\ldots,M+1\}$ with $k <i$ or $k >i'$, 
then the situation is as in the case of an interval. 
In the remaining case $I = \{1\} \cup \{M+1\}$, the part of $\mathcal{P}_{\{1,M+1\}}$ with 
$t^{(2)} \geq \max_k\{ t^{(2)}_{\{i,i',k\}}\}$ is visible. 
We conclude that $\mathcal{P}_I$, with $I$ of the form $\{i,i+1\}$, $i \in \{1,\ldots,M\}$, 
has a visible part extending to arbitrary negative values of $y=t^{(2)}$, and only 
$\mathcal{P}_{\{1,M+1\}}$ has a visible part extending to arbitrary positive values of $y$. 
Any visible part of another $\mathcal{P}_{\{i,i'\}}$ has to be bounded in the $xy$-plane. 
Furthermore, (\ref{DM_t^n-1_order}) shows that 
\bez
    t^{(1)}_{\{1,2\}} < t^{(1)}_{\{2,3\}} < \cdots < t^{(1)}_{\{M,M+1\}} \qquad
    \mbox{for} \qquad t^{(2)} < \min_{i,k}\{t^{(2)}_{\{i,i+1,k\}}\} \; .
\eez 
All this information determines the asymptotic line soliton structure in the $xy$-plane 
depicted in Figure~\ref{DM_fig:asympt}. 
\end{example}

\begin{SCfigure}[2.][hbtp]
\includegraphics[scale=.36]{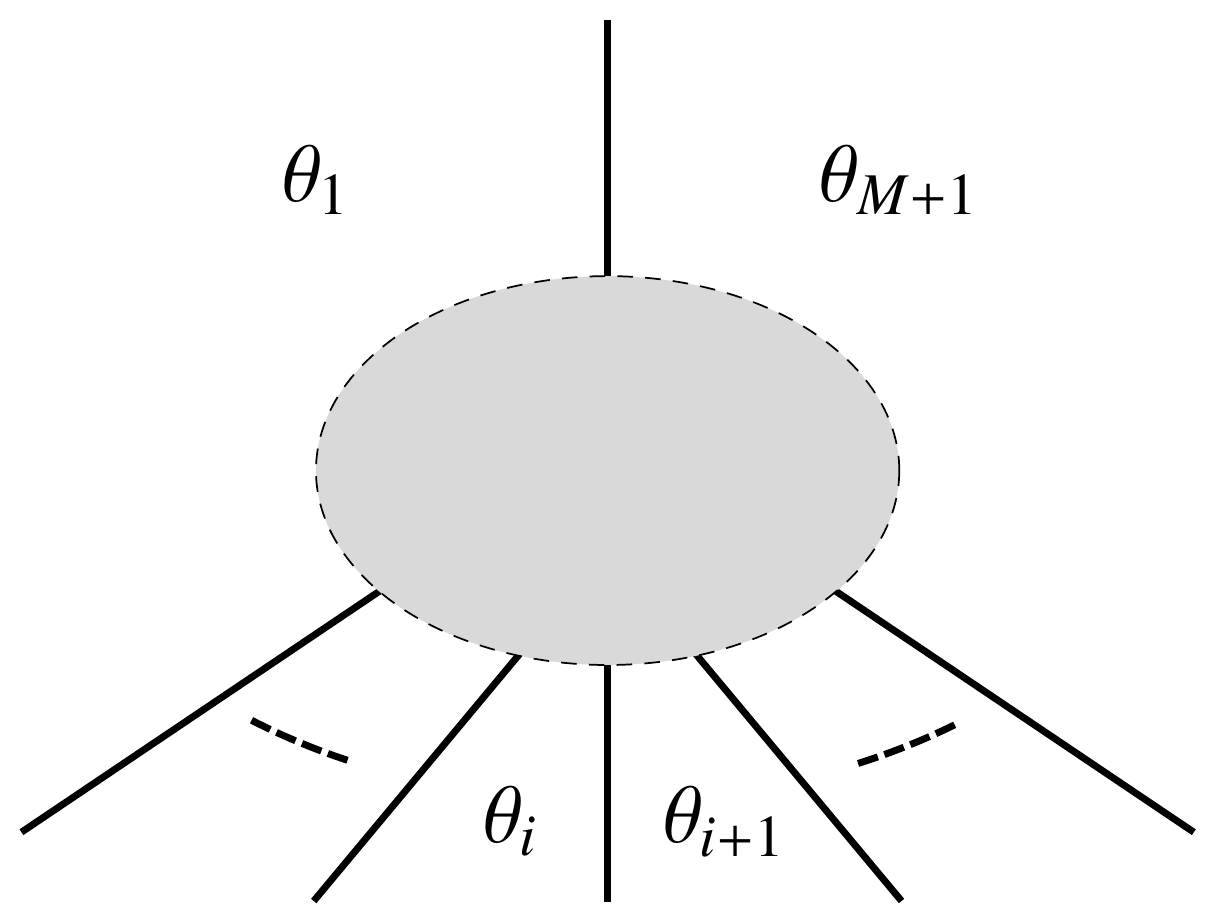} 
\caption{Asymptotic structure in the $xy$-plane ($x=t^{(1)}$ horizontal, $y=t^{(2)}$ vertical 
coordinate) for the line soliton solutions given by (\ref{DM_tau_simple_class}). Outside a 
large enough disk, the $xy$-plane is divided into regions as shown in the figure, where one 
of the phases $\theta_i$ dominates all others. 
This structure is independent of the values of $t^{(r)}$, $r>2$.  }
\label{DM_fig:asympt} 
\end{SCfigure} 

For $k \leq l$, let $[k,l]$ denote the \emph{interval} $\{k,k+1,\ldots,l\}$. We call an interval 
\emph{even} (respectively \emph{odd}) if its cardinality is even (respectively odd). 
Now we formulate a generalization of results in Example~\ref{DM_ex:n=2}. The proof is by 
inspection of (\ref{DM_theta-diff-on-Pred}), which depends on the structure of $I$. 
We note that an even interval 
cannot influence the sign of the right hand side of (\ref{DM_theta-diff-on-Pred}). 

\begin{proposition}
\label{DM_prop:asymp_vis}
Let $I$ be an $n$-subset of $\Omega = [M+1]$, $1< n < M+1$. \\
(1) Let $I$ be the disjoint union of even intervals, and also $\{1\}$ if $n$ is odd.
Then 
\bez
 \big\{ \mathbf{t} \in \mathcal{P}_I \, | \, t^{(n)} \leq \min\{ t^{(n)}_K | \, 
        K \in {\Omega \choose n+1}, \, I \subset K \}\, \big\}
\eez
is visible, but its complement 
\bez
  \big\{\mathbf{t} \in \mathcal{P}_I \, | \, t^{(n)} > 
  \min\{ t^{(n)}_K | \, K \in {\Omega \choose n+1}, \, I \subset K \} \, \big\}
\eez 
not.   \\
(2) Let $I$ be the disjoint union of $\{M+1\}$ and any number of even intervals, and also $\{1\}$ if $n$ is even. 
Then 
\bez 
  \big\{ \mathbf{t} \in \mathcal{P}_I \, | \, t^{(n)} \geq \max\{ t^{(n)}_K | \, K \in {\Omega \choose n+1}, \, I \subset K \} \, \big\}
\eez
is visible, but its complement 
\bez
 \big\{ \mathbf{t} \in \mathcal{P}_I \, | \, t^{(n)} < \max\{ t^{(n)}_K | \, K \in {\Omega \choose n+1}, 
     \, I \subset K \} \, \big\}
\eez
not. \\
(3) If $I$ is not of the form specified in (1) or (2), a visible part of $\mathcal{P}_I$ 
can only appear for $t^{(n)}$ between $\min\{ t^{(n)}_K | \, K \in {\Omega \choose n+1}, \, I \subset K \}$ 
and $\max\{ t^{(n)}_K | \, K \in {\Omega \choose n+1}, \, I \subset K \}$.
\end{proposition}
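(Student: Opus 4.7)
The plan is to reduce everything to the sign $\sigma_I(k):=\mathrm{sign}\prod_{j=1}^n(p_k-p_{i_j})$ appearing in (\ref{DM_theta-diff-on-Pred}). Since $p_1<\cdots<p_{M+1}$, one has $\sigma_I(k)=(-1)^{\nu_I(k)}$ with $\nu_I(k):=|\{j\,:\,i_j>k\}|$, and the visibility condition $\theta_k\le\theta_{i_1}$ on $\mathcal{P}_I$ becomes
\bez
   \sigma_I(k)\,(t^{(n)}-t^{(n)}_{I\cup\{k\}})\le 0
   \qquad \text{for every } k\in\Omega\setminus I.
\eez

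For part (1) I would use the prescribed set-theoretic decomposition $I=\{1\}\sqcup E_1\sqcup\cdots\sqcup E_s$ (omitting the $\{1\}$-block when $n$ is even), with each $E_l$ an interval of even cardinality. For any $k\notin I$ the block $\{1\}$ contributes $0$ to $\nu_I(k)$ because $k\ge 2$, while each $E_l$ contributes either $|E_l|$ or $0$ depending on whether $k$ lies to the left or to the right of $E_l$ (it cannot lie inside, since $k\notin I$). Hence $\nu_I(k)$ is a sum of even numbers, so $\sigma_I\equiv+1$, and the visibility condition reduces to $t^{(n)}\le t^{(n)}_{I\cup\{k\}}$ for every $k$, i.e.\ $t^{(n)}\le\min_K t^{(n)}_K$. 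Part (2) is entirely analogous: the singleton $\{M+1\}\in I$ always contributes exactly $1$ (because $k\le M$), the remaining contributions are even as before, so $\sigma_I\equiv-1$ and the inequality flips to $t^{(n)}\ge\max_K t^{(n)}_K$.

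For part (3) the goal is to show that if $I$ is not of form (1) or (2), then $\sigma_I$ takes both values on $\Omega\setminus I$. The parity $\nu_I(k)\bmod 2$ stays constant as $k$ moves within a gap of $I$ and flips precisely when $k$ crosses a maximal interval of $I$ of odd cardinality. Combined with the boundary value $\sigma_I(M+1)=+1$ when $M+1\notin I$, or $\sigma_I(\min R-1)=(-1)^{|R|}$ when $M+1\in I$ with rightmost maximal interval $R$, a short case distinction on whether $1\in I$ and whether $M+1\in I$ identifies the constant-sign configurations as exactly (1) (value $+1$) and (2) (value $-1$). In every other case one obtains $k_+,k_-\in\Omega\setminus I$ with $\sigma_I(k_+)=+1$ and $\sigma_I(k_-)=-1$; the two visibility inequalities $t^{(n)}\le t^{(n)}_{I\cup\{k_+\}}\le\max_K t^{(n)}_K$ and $t^{(n)}\ge t^{(n)}_{I\cup\{k_-\}}\ge\min_K t^{(n)}_K$ then yield the claimed sandwich.

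The main obstacle will be the combinatorial bookkeeping in (3): one has to verify that the parity-crossing analysis, together with the boundary value of $\sigma_I$, recovers precisely the interval configurations in (1) and (2), and nothing else. This requires care in the interaction between the leftmost maximal interval of $I$ (its size and whether it contains $1$) and the rightmost one (whether it contains $M+1$), across the two parity classes of $n$.
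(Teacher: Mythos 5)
Your proposal is correct and follows exactly the route the paper indicates: the whole proposition reduces to the sign of $(p_k-p_{i_1})\cdots(p_k-p_{i_n})$ in (\ref{DM_theta-diff-on-Pred}), which is $(-1)^{\nu_I(k)}$ with $\nu_I(k)$ the number of elements of $I$ exceeding $k$, so that even intervals contribute nothing to the sign, $\{1\}$ contributes $0$, and $\{M+1\}$ contributes $1$ — precisely the paper's one-line "inspection" argument, which you have fleshed out (including the converse case analysis needed for part (3), which I checked does close as you describe).
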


\begin{example}
Let $n=3$. According to Proposition~\ref{DM_prop:asymp_vis}, 
for $t^{(3)} \leq \min\{ t^{(3)}_K | \, K \in {\Omega \choose 4} \}$ only (the corresponding 
parts of) $\mathcal{P}_{\{1,i,i+1\}}$, $i = 2,\ldots,M$, are visible,
and for $t^{(3)} \geq \max\{ t^{(3)}_K | \, K \in {\Omega \choose 4} \}$ only (the corresponding 
parts of) $\mathcal{P}_{\{i,i+1,M+1\}}$, $i=1,\ldots,M-1$, are visible. 
The order of the values $t^{(2)}_{\{1,i,i+1\}}$, respectively $t^{(2)}_{\{i,i+1,M+1\}}$,  
follows from (\ref{DM_t^n-1_order}). All this leads to the structure of a 
line soliton solution for large negative time (left comb), respectively large positive time (right comb), 
shown in Figure~\ref{DM_fig:asympt_tree}. 
\begin{figure}
\begin{center}
\includegraphics[scale=.6]{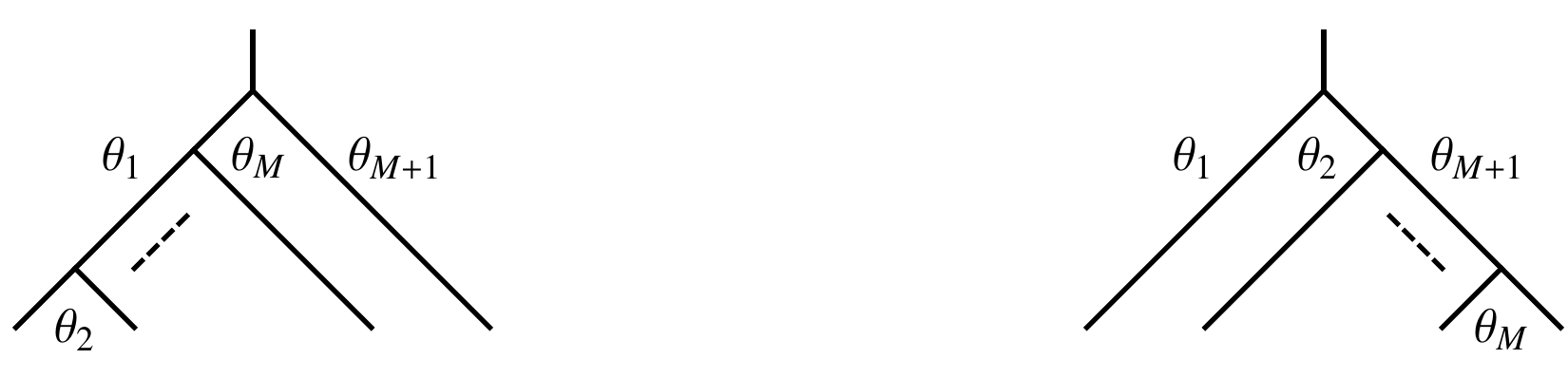}  
\parbox{15cm}{
\caption{For sufficiently large negative (respectively positive) values of time $t=t^{(3)}$, 
any line soliton solution from the class (\ref{DM_tau_simple_class}) has the tree shape 
in the $xy$-plane shown by the left (right) graph. \label{DM_fig:asympt_tree} } 
}
\end{center} 
\end{figure} 
\end{example}

\begin{example}
Let $M=5$ and $n=4$. 
For $t^{(4)} \leq \min\{ t^{(4)}_K | \, K \in {[6] \choose 5} \}$, the corresponding 
part of $\mathcal{P}_I$ is only visible if $I$ is one of the sets $\{1,2,3,4\}$, 
$\{1,2,4,5\}$, $\{1,2,5,6\}$, $\{2,3,4,5\}$, $\{2,3,5,6\}$, $\{3,4,5,6\}$. 
For $t^{(4)} \geq \max\{ t^{(4)}_K | \, K \in {[6] \choose 5} \}$, the corresponding 
part of $\mathcal{P}_I$ is only visible if $I$ is one of $\{1,2,3,6\}$, 
$\{1,3,4,6\}$, $\{1,4,5,6\}$. Also see Figure~\ref{DM_fig:T64chains}. 
\end{example}

In the following, $K$ denotes the set $\{ k_1,\ldots, k_{n+1} \}$, and will be assumed 
to be in linear order, so that $k_1 < \cdots < k_{n+1}$. We split such a set 
into\footnote{$\lceil n/2 \rceil$ denotes the smallest integer greater than or 
equal to $n/2$, and $\lfloor n/2 \rfloor$ the largest integer smaller than or equal to $n/2$. }
\bez
    && K_< =\{ k_{n-2r} \, | \, r=0,\ldots,\lceil n/2 \rceil-1 \} 
           =\{ k_{2-(n \, \mathrm{mod} \, 2)}, \ldots, k_n \}    \, , \\ 
    && K_> =\{ k_{n-2r+1} \, | \, r=0,\ldots,\lfloor n/2 \rfloor \} 
           =\{ k_{1+(n \, \mathrm{mod} \, 2)}  ,\ldots,k_{n+1} \}        \; . 
\eez
Choosing a point $\mathbf{t}_0 \in \mathcal{P}_K$ means fixing the free coordinates on $\mathcal{P}_K$ to 
values $t^{(n+1)}_0, \ldots,t^{(M)}_0$. For each $k \in K$, it determines a line 
\bez
 \{ \mathbf{t}_{K \setminus \{k\}}(\lambda,t^{(n+1)}_0, \ldots, t^{(M)}_0) \, | 
 \, \lambda \in \mathbb{R} \} \subset \mathcal{P}_{K \setminus \{k\}} \; . 
\eez

\begin{proposition}[\cite{DM_DMH11KPT}, Proposition~A.7]
\label{DM_prop:invisible_half-line}
The following half-lines are non-visible:  
\bez
 && \{ \mathbf{t}_{K \setminus \{k\}}(\lambda,t^{(n+1)}_0, \ldots, t^{(M)}_0) \, | \, 
     \lambda < t^{(n)}_K(t^{(n+1)}_0, \ldots, t^{(M)}_0) \} \; \mbox{ for } \, k \in K_< \; , \\
 && \{ \mathbf{t}_{K \setminus \{k\}}(\lambda,t^{(n+1)}_0, \ldots, t^{(M)}_0) \, | \, 
     \lambda > t^{(n)}_K(t^{(n+1)}_0, \ldots, t^{(M)}_0) \} \; \mbox{ for } \, k \in K_> \; .
\eez
\end{proposition}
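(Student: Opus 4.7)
My plan is to prove both non-visibility statements simultaneously by exhibiting a single dominating phase — namely $\theta_k$ itself — and then tracking the sign of a Vandermonde-type coefficient.

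First I would fix $\mathbf{t}_0 \in \mathcal{P}_K$ with free coordinates $(t^{(n+1)}_0,\ldots,t^{(M)}_0)$ and, for $k = k_j \in K$, set $I := K \setminus \{k\}$, which has $n$ elements. Along the parametrized line $\lambda \mapsto \mathbf{t}_{I}(\lambda,t^{(n+1)}_0,\ldots,t^{(M)}_0) \subset \mathcal{P}_I$, the phases $\theta_{k_l}$ with $l \neq j$ all coincide by the defining condition of $\mathcal{P}_I$, so a point of the line fails to be visible as soon as $\theta_k$ strictly exceeds this common value. Since $I \cup \{k\} = K$ and $|I| = n$, applying (\ref{DM_theta-diff-on-Pred}) yields, along the line,
\[
  \theta_k - \theta_{k_l} \;=\; \prod_{m \neq j}(p_{k_j} - p_{k_m}) \cdot (\lambda - t^{(n)}_K)
  \qquad \text{for any fixed } l \neq j.
\]
Thus the whole problem reduces to determining the sign of the product on the right.

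The sign count is elementary. Because $p_{k_1} < \cdots < p_{k_{n+1}}$, the product contains exactly one negative factor for each index $m > j$, so it has sign $(-1)^{n+1-j}$. Direct inspection of the defining enumerations shows that $k_j \in K_<$ iff $j$ has the same parity as $n$ (equivalently, $n+1-j$ is odd), while $k_j \in K_>$ iff $n+1-j$ is even. Consequently, for $k \in K_<$ the coefficient is negative and $\theta_k > \theta_{k_l}$ is equivalent to $\lambda < t^{(n)}_K$; for $k \in K_>$ the coefficient is positive and $\theta_k > \theta_{k_l}$ is equivalent to $\lambda > t^{(n)}_K$. Reading off the respective ranges of $\lambda$ gives exactly the two non-visible half-lines claimed in the proposition.

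There is no serious obstacle beyond this parity bookkeeping. The only small check worth making is that $K_<$ and $K_>$ do exhaust $K$: their cardinalities $\lceil n/2 \rceil$ and $\lfloor n/2 \rfloor + 1$ sum to $n+1$, confirming that every $k_j$ falls into exactly one of the two parity classes, so the dichotomy above covers the full hypothesis of the proposition.
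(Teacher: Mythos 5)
Your argument is correct and is essentially the intended one: the paper itself imports this proposition from \cite{DM_DMH11KPT} without proof, but the tool it supplies immediately beforehand, namely (\ref{DM_theta-diff-on-Pred}) with $I=K\setminus\{k_j\}$ so that $I\cup\{k_j\}=K$, is exactly what you use, and your sign count $(-1)^{n+1-j}$ together with the parity characterization of $K_<$ and $K_>$ checks out. The one point worth stating explicitly is that dominance of $\theta_{k_j}$ over the common value of the phases indexed by $I$ excludes the point from $\mathcal{U}_I$, which is the definition of non-visibility for points of $\mathcal{P}_I$ — you do note this, so the proof is complete.
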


\begin{proposition}
\label{DM_prop:non-visible}
If $\mathbf{t}_0 = \mathbf{t}_I(t^{(n)}_0, \ldots, t^{(M)}_0)$ is non-visible, then there is 
a $k \in \Omega \setminus I$ such that $\mathbf{t}_0$ lies on the non-visible side 
of the point $\mathbf{t}_{I \cup \{k\}}(t^{(n+1)}_0, \ldots, t^{(M)}_0)$ on the line 
$\{\mathbf{t}_I(\lambda,t^{(n+1)}_0, \ldots, t^{(M)}_0) \, | \, \lambda \in \mathbb{R} \}$.
\end{proposition}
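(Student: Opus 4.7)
The plan is to unpack the definition of non-visibility in terms of dominating phases and then translate the resulting inequality, via the exact formula (\ref{DM_theta-diff-on-Pred}), into a statement about the parameter $\lambda$ along the line in $\mathcal{P}_I$. The whole proof is really a matching exercise between the sign of the polynomial prefactor in (\ref{DM_theta-diff-on-Pred}) and the partition of $K = I \cup \{k\}$ into $K_<$ and $K_>$ used in Proposition~\ref{DM_prop:invisible_half-line}.

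First I would fix $\mathbf{t}_0 \in \mathcal{P}_I$ non-visible. By definition of visibility and the fact that $\theta_{i_1} = \cdots = \theta_{i_n}$ on $\mathcal{P}_I$, non-visibility means there exists $k \in \Omega \setminus I$ with $\theta_k(\mathbf{t}_0) > \theta_{i_1}(\mathbf{t}_0)$. Pick such a $k$ and set $K := I \cup \{k\}$, written in linear order as $K = \{k_1 < \cdots < k_{n+1}\}$, with $k = k_j$ for some $j$. Applying (\ref{DM_theta-diff-on-Pred}) on $\mathcal{P}_I$ at $\mathbf{t}_0$, the condition $\theta_k - \theta_{i_1} > 0$ becomes
\[
 (p_k - p_{i_1})\cdots(p_k - p_{i_n}) \, \bigl(t^{(n)}_0 - t^{(n)}_K(t^{(n+1)}_0,\ldots,t^{(M)}_0)\bigr) > 0.
\]

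Next I would compute the sign of the prefactor: since $p_1 < \cdots < p_{M+1}$ and $I = K \setminus \{k_j\}$ contains exactly $j-1$ indices below $k$ and $n+1-j$ above $k$, the product has sign $(-1)^{n+1-j}$. Hence the non-visibility condition reduces to
\[
 \operatorname{sgn}\bigl(t^{(n)}_0 - t^{(n)}_K(t^{(n+1)}_0,\ldots,t^{(M)}_0)\bigr) = (-1)^{n+1-j}.
\]
Now I would compare with Proposition~\ref{DM_prop:invisible_half-line}. From the explicit description of $K_<$ and $K_>$, $k_j$ lies in $K_<$ iff $n-j$ is even, and in $K_>$ iff $n-j$ is odd. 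In the first case the non-visible half of the line in $\mathcal{P}_{K\setminus\{k_j\}} = \mathcal{P}_I$ is $\{\lambda < t^{(n)}_K\}$, which matches the sign condition $t^{(n)}_0 < t^{(n)}_K$ obtained above (since then $(-1)^{n+1-j} = -1$). In the second case the non-visible half is $\{\lambda > t^{(n)}_K\}$, matching $t^{(n)}_0 > t^{(n)}_K$ (since then $(-1)^{n+1-j}=+1$). In both cases $\mathbf{t}_0$ — which corresponds to the value $\lambda = t^{(n)}_0$ on this line — lies on the non-visible side of the point $\mathbf{t}_{I \cup \{k\}}(t^{(n+1)}_0,\ldots,t^{(M)}_0)$, as claimed.

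The only subtle step is the sign bookkeeping in the third paragraph; it is not really a difficulty so much as the place where one must be careful with the parities of $n$ and $j$. Once that parity check is done, the proposition is essentially a reformulation, via (\ref{DM_theta-diff-on-Pred}) and Proposition~\ref{DM_prop:invisible_half-line}, of the definition of non-visibility.
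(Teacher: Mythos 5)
Your proof is correct and follows essentially the same route as the paper's: both rest on the identity (\ref{DM_theta-diff-on-Pred}) applied to $K = I \cup \{k\}$ for a phase $\theta_k$ dominating at $\mathbf{t}_0$, combined with Proposition~\ref{DM_prop:invisible_half-line}. The only cosmetic difference is that you carry out the parity computation for the sign of $(p_k-p_{i_1})\cdots(p_k-p_{i_n})$ explicitly, whereas the paper argues by cases on the sign of $t^{(n)}_0 - t^{(n)}_K$ and excludes the wrong half of $K_<$, $K_>$ by contradiction; your sign bookkeeping checks out.
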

\begin{proof} 
Since $\mathbf{t}_0$ is non-visible, it lies in some $\mathcal{U}_k$, $k \notin I$. 
Let $K=I \cup \{k\}$. Then $\mathbf{t}_1 := \mathbf{t}_K(t^{(n+1)}_0, \ldots, t^{(M)}_0)$
lies on the above line. Let us consider the case $t^{(n)}_0 < t^{(n)}_K(t^{(n+1)}_0, \ldots, t^{(M)}_0)$. 
If $k \in K_>$, then 
(\ref{DM_theta-diff-on-Pred}) shows that $\theta_k < \theta_{i_1}$, which contradicts 
$\mathbf{t}_0 \in \mathcal{U}_k$. Hence $k \in K_<$ and $\mathbf{t}_0$ lies on the non-visible 
side of $\mathbf{t}_1$ according to Proposition~\ref{DM_prop:invisible_half-line}. 
A similar argument applies in the case $t^{(n)}_0 > t^{(n)}_K(t^{(n+1)}_0, \ldots, t^{(M)}_0)$.
\end{proof}

According to Proposition~\ref{DM_prop:non-visible}, Proposition~\ref{DM_prop:invisible_half-line} provides 
us with a method to determine \emph{all} non-visible points, and thus also all visible points. 
\vskip.2cm

A point $\mathbf{t}_0 \in \mathcal{P}_I$ is called \emph{generic} if $\mathbf{t}_0 \notin \mathcal{P}_J$ 
for every $J \subset \Omega$ , $J \not\subset I$.

\begin{proposition}
Let $\mathbf{t}_0 \in \mathcal{P}_I$ be generic and visible, and $U$ any convex neighborhood 
of $\mathbf{t}_0$ in $\mathcal{P}_I$ that does not intersect any $\mathcal{P}_K$ with $I \subset K$, 
$I \neq K$. 
Then $U$ is visible.
\end{proposition}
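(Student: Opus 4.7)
The plan is to argue by contradiction. Suppose some point $\mathbf{t}_1 \in U$ is non-visible, and, using convexity of $U$, consider the straight-line segment $\gamma:[0,1]\to U$ with $\gamma(0)=\mathbf{t}_0$ and $\gamma(1)=\mathbf{t}_1$. The idea is that continuity must force a visibility transition somewhere along $\gamma$, and such a transition can only occur on a stratum $\mathcal{P}_K$ with $K\supsetneq I$, which will contradict the neighborhood hypothesis on $U$.

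The first step is to select an index $k\in\Omega\setminus I$ that witnesses the failure of visibility at $\mathbf{t}_1$. Since $\mathbf{t}_1\in\mathcal{P}_I\setminus\mathcal{U}_I$ and since $\theta_{i_1}=\cdots=\theta_{i_n}$ on $\mathcal{P}_I$, there is some $k\notin I$ with $\theta_k(\mathbf{t}_1)>\theta_{i_1}(\mathbf{t}_1)$. On the other hand, visibility of $\mathbf{t}_0$ gives $\theta_k(\mathbf{t}_0)\le\theta_{i_1}(\mathbf{t}_0)$, and this inequality is in fact \emph{strict}: equality would place $\mathbf{t}_0$ in $\mathcal{P}_{I\cup\{k\}}$, but $\mathbf{t}_0\in U$ and $I\cup\{k\}\supsetneq I$, contradicting the assumption that $U$ meets no $\mathcal{P}_K$ with $K\supsetneq I$.

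Next, I would apply the intermediate value theorem to the continuous function $\lambda\mapsto(\theta_k-\theta_{i_1})\circ\gamma(\lambda)$ on $[0,1]$, which is strictly negative at $\lambda=0$ and strictly positive at $\lambda=1$. This yields some $\lambda_\ast\in(0,1)$ with $\theta_k(\gamma(\lambda_\ast))=\theta_{i_1}(\gamma(\lambda_\ast))$. The point $\mathbf{t}_\ast:=\gamma(\lambda_\ast)$ lies in $U$ by convexity and in $\mathcal{P}_I$ since $U\subset\mathcal{P}_I$, hence in $\mathcal{P}_{I\cup\{k\}}$, contradicting the hypothesis on $U$ because $I\subsetneq I\cup\{k\}$.

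The only delicate point is the strictness of $\theta_k(\mathbf{t}_0)<\theta_{i_1}(\mathbf{t}_0)$, which is exactly what the neighborhood hypothesis on $U$ supplies; genericity of $\mathbf{t}_0$ then plays no further role in the argument. Propositions~\ref{DM_prop:invisible_half-line} and~\ref{DM_prop:non-visible} are not strictly required, but they encode the same structural picture being exploited here: transitions between visible and non-visible parts of $\mathcal{P}_I$ can occur only across the substrata $\mathcal{P}_K$ with $K\supsetneq I$.
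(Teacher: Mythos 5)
Your argument is correct and is essentially the paper's own proof: both proceed by contradiction, pick a phase $\theta_k$, $k\notin I$, that dominates at the non-visible point $\mathbf{t}_1$, and locate on the segment from $\mathbf{t}_0$ to $\mathbf{t}_1$ a point where $\theta_k=\theta_{i_1}$, which lies in $U\cap\mathcal{P}_{I\cup\{k\}}$ and contradicts the hypothesis on $U$. The paper phrases this crossing point as the intersection of the segment with the boundary of the convex set $\mathcal{U}_k$, whereas you invoke the intermediate value theorem for $\theta_k-\theta_{i_1}$; these are the same step, and your explicit justification of the strict inequality at $\mathbf{t}_0$ is a welcome clarification.
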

\begin{proof} Let $U$ be a neighborhood as specified in the assumptions. Suppose a non-visible point 
$\mathbf{t}_1 \in U$ exists. Then there is some $k \in \Omega \setminus I$ such that, at $\mathbf{t}_1$,  
$\theta_k$ dominates all phases associated with elements of $I$. 
Let $\mathbf{t}'$ be the point 
where the line segment connecting $\mathbf{t}_0$ and $\mathbf{t}_1$ intersects the boundary 
of $\mathcal{U}_k$. Then $\mathbf{t}' \in U \cap \mathcal{U}_k \subset \mathcal{P}_{I \cup \{k\}}$ contradicts 
one of our assumptions.
\end{proof}

\begin{proposition}
\label{DM_prop:visible_points}
If $\mathbf{t}_0 \in \mathcal{P}_K$ is generic and visible, 
then points sufficiently close to $\mathbf{t}_0$ 
on the complementary half-line of any of the half-lines in Proposition~\ref{DM_prop:invisible_half-line}
are also visible. 
\end{proposition}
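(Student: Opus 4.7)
The plan is to verify, for each phase $\theta_j$ with $j \in \Omega \setminus (K \setminus \{k\})$, that it is strictly dominated by the common value of the phases $\{\theta_i\}_{i \in K \setminus \{k\}}$ at points of the complementary half-line sufficiently close to $\mathbf{t}_0$. The potential dominators split naturally into two groups: the distinguished phase $\theta_k$, and the outside phases $\theta_j$ with $j \in \Omega \setminus K$. These two groups call for quite different arguments.

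The outside phases are handled by a short continuity argument. Genericity of $\mathbf{t}_0$ says $\mathbf{t}_0 \notin \mathcal{P}_{\{j,k_1\}}$ for $j \in \Omega \setminus K$, hence $\theta_j(\mathbf{t}_0) \neq \theta_{k_1}(\mathbf{t}_0)$; combining with the assumed visibility of $\mathbf{t}_0$ upgrades this to the strict inequality $\theta_j(\mathbf{t}_0) < \theta_{k_1}(\mathbf{t}_0)$. By continuity, the same strict inequality persists at every point of $\mathcal{P}_{K \setminus \{k\}}$ in a small enough neighborhood of $\mathbf{t}_0$, and in particular at points of the complementary half-line near $\mathbf{t}_0$.

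The core of the argument concerns $\theta_k$. Applying (\ref{DM_theta-diff-on-Pred}) with $I = K \setminus \{k\}$ yields
\bez
   \theta_k - \theta_{k_1} = \Big( \prod_{i \in K \setminus \{k\}} (p_k - p_i) \Big) \, (t^{(n)} - t^{(n)}_K)
\eez
on $\mathcal{P}_{K \setminus \{k\}}$. I would then determine the sign of the $p$-product: writing $k = k_j$, the factor $p_k - p_{k_l}$ is negative exactly when $l > j$, so there are $n+1-j$ negative factors. This count is odd when $k \in K_<$ and even when $k \in K_>$. Matching these signs against the half-line descriptions in Proposition~\ref{DM_prop:invisible_half-line}, the non-visible half-line is precisely the one on which the $p$-product and $(t^{(n)} - t^{(n)}_K)$ combine to make $\theta_k - \theta_{k_1}$ positive, so the complementary half-line is exactly the side on which this difference is negative. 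Hence $\theta_k$ is dominated on the whole complementary half-line (in particular near $\mathbf{t}_0$), and together with the outside-phase estimate this delivers visibility of nearby points.

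I expect the main obstacle to be the parity bookkeeping in the sign computation, namely checking that in both cases $k \in K_<$ and $k \in K_>$ the complementary half-line genuinely lands on the $\theta_k < \theta_{k_1}$ side rather than the reverse. Once this is pinned down, the remainder of the proof reduces to local continuity and a direct reading of (\ref{DM_theta-diff-on-Pred}).
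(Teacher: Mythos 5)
Your proof is correct and follows essentially the same route as the paper's: control of the phases outside $K$ via genericity plus continuity (the paper phrases this as a neighborhood of $\mathbf{t}_0$ being covered by the sets $\mathcal{U}_{k_r}\cap\mathcal{E}$), and control of the removed phase $\theta_k$ via the sign of the right-hand side of (\ref{DM_theta-diff-on-Pred}). Your explicit parity check of the product $\prod_{l\neq j}(p_{k_j}-p_{k_l})$ is exactly the computation underlying Proposition~\ref{DM_prop:invisible_half-line}, so you could equally well have cited that the non-visible half-line is non-visible precisely because $\theta_k$ dominates there, hence is dominated on the complementary half-line.
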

\begin{proof}
We have $\mathbf{t}_0 \in \mathcal{U}_{k_r}$, $r=1,\ldots, n+1$. The lines 
\bez
   \mathcal{L}_r = \{ \mathbf{t}_{K \setminus \{k_r\}}(\lambda,t^{(n+1)}_0, \ldots, t^{(M)}_0) \, | \, 
     \lambda \in \mathbb{R} \}  \qquad \quad r=1,\ldots, n+1
\eez 
all lie in the $n$-dimensional space $\mathcal{E}$ defined by $t^{(s)} = t^{(s)}_0$, $s=n+1,\ldots,M$. 
Since $\mathbf{t}_0$ is assumed to be visible and generic, there is a neighborhood of $\mathbf{t}_0$ 
covered by the sets $\mathcal{U}_{k_r} \cap \mathcal{E}$. Since each line contains the visible point 
$\mathbf{t}_0$, its visible part extends on the complementary side of that in Proposition~\ref{DM_prop:invisible_half-line}, either indefinitely or until it 
meets some $\mathcal{U}_m$ with $m \notin K$. 
\end{proof}

\begin{proposition}
Let $I \in {\Omega \choose n}$, $n \in \{1,\ldots,M\}$, and $t^{(n+1)}_0, \ldots, t^{(M)}_0 \in \mathbb{R}$.  \\
If all points 
$\mathbf{t}_{I \cup \{k\}}(t^{(n+1)}_0, \ldots, t^{(M)}_0)$, $k \in \Omega \setminus I$,
are non-visible, then the whole line 
\bez
 \{ \mathbf{t}_I(\lambda,t^{(n+1)}_0, \ldots, t^{(M)}_0) \, | \, 
     \lambda \in \mathbb{R} \}
\eez 
is non-visible. 
\end{proposition}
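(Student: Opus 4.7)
The plan is to reduce the statement to a one-dimensional convexity argument on the line $\mathcal{L}=\{\mathbf{t}_I(\lambda,t_0^{(n+1)},\ldots,t_0^{(M)}):\lambda\in\mathbb{R}\}$, by making the visibility condition on $\mathcal{L}$ completely explicit in the parameter $\lambda=t^{(n)}$.

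First I would recall that, by the definition of $\mathcal{U}_I$ in (\ref{DM_cU_i}) together with the fact that $\theta_{i_1}=\cdots=\theta_{i_n}$ on $\mathcal{P}_I$, a point $\mathbf{t}\in\mathcal{P}_I$ is visible if and only if $\theta_k(\mathbf{t})\le\theta_{i_1}(\mathbf{t})$ for every $k\in\Omega\setminus I$. Substituting the parametrization of $\mathcal{L}$ into (\ref{DM_theta-diff-on-Pred}) then yields, for each such $k$,
\[
  \theta_k-\theta_{i_1} \;=\; (p_k-p_{i_1})(p_k-p_{i_2})\cdots(p_k-p_{i_n})\,(\lambda-\lambda_k),
\]
where $\lambda_k := t^{(n)}_{I\cup\{k\}}(t_0^{(n+1)},\ldots,t_0^{(M)})$ is precisely the parameter value at which $\mathcal{L}$ meets $\mathcal{P}_{I\cup\{k\}}$, i.e.\ the point $\mathbf{t}_{I\cup\{k\}}(t_0^{(n+1)},\ldots,t_0^{(M)})$. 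Because the $p_j$ are pairwise distinct and $k\notin I$, the leading coefficient is a nonzero real number, so the condition $\theta_k\le\theta_{i_1}$ cuts out a closed half-line $H_k\subset\mathbb{R}$ whose boundary point is exactly $\lambda_k$.

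Next, let $V := \bigcap_{k\in\Omega\setminus I} H_k$ denote the set of visible parameters on $\mathcal{L}$. As a finite intersection of closed half-lines, $V$ is closed and convex in $\mathbb{R}$; and since $n\le M$, the set $\Omega\setminus I$ is nonempty and each $H_k$ is a proper half-line, so $V\ne\mathbb{R}$. Suppose, for contradiction, that $V$ is nonempty. Then $V$ has at least one finite boundary point $\lambda^{*}$, and any such boundary point must coincide with some $\lambda_{k^{*}}$—namely the endpoint of whichever half-line constraint is binding there. Because $V$ is closed, $\lambda^{*}\in V$, which is to say that $\mathbf{t}_{I\cup\{k^{*}\}}(t_0^{(n+1)},\ldots,t_0^{(M)})$ itself is visible. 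This contradicts the standing hypothesis that all the points $\mathbf{t}_{I\cup\{k\}}(t_0^{(n+1)},\ldots,t_0^{(M)})$, $k\in\Omega\setminus I$, are non-visible. Hence $V=\emptyset$ and every point of the line is non-visible.

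The only delicate point is recognizing that the visibility condition along $\mathcal{L}$ takes the very rigid form of a finite intersection of closed half-lines whose endpoints are exactly the points one is assuming to be non-visible; once (\ref{DM_theta-diff-on-Pred}) is used to extract this linear form, the rest is a one-line convexity argument on $\mathbb{R}$. A cleaner but essentially equivalent route would be to invoke Proposition~\ref{DM_prop:non-visible} twice—once to see that the non-visible part of $\mathcal{L}$ is a union of half-lines emanating from the $\mathbf{t}_{I\cup\{k\}}$, and once to see that no visible ``island'' can exist between them—but the linear computation above is more transparent and self-contained.
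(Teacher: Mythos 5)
Your proof is correct, and it takes a more explicit route than the paper's. The paper argues by contradiction with a geometric device: it picks the \emph{nearest} of the assumed non-visible points $\mathbf{t}_1=\mathbf{t}_{I\cup\{m\}}$ to a hypothetical visible point $\mathbf{t}_0$, notes that $\mathbf{t}_1$ lies in some $\mathcal{U}_{m'}$ with $m'\notin I$, and uses convexity of $\mathcal{U}_{m'}$ to locate a boundary crossing of the segment $[\mathbf{t}_0,\mathbf{t}_1]$ at the special point $\mathbf{t}_{I\cup\{m'\}}$, which would then have to be visible --- contradiction. You instead make the visibility condition along $\mathcal{L}$ fully explicit via (\ref{DM_theta-diff-on-Pred}): since the leading product $(p_k-p_{i_1})\cdots(p_k-p_{i_n})$ is nonzero, each constraint $\theta_k\le\theta_{i_1}$ cuts out a closed half-line in $\lambda$ with endpoint exactly $\lambda_k=t^{(n)}_{I\cup\{k\}}$, so the visible set $V$ is a finite intersection of proper closed half-lines; if nonempty it is a closed interval distinct from $\mathbb{R}$, its finite endpoint is some $\lambda_{k^*}$ and belongs to $V$, making $\mathbf{t}_{I\cup\{k^*\}}$ visible, contrary to hypothesis. (The one small point you pass over --- that membership of this point in $\mathcal{U}_I$ implies membership in $\mathcal{U}_{I\cup\{k^*\}}$, i.e.\ visibility in the sense of the hypothesis --- is immediate since $\theta_{k^*}=\theta_{i_1}$ there.) Your version is self-contained and avoids both the ``nearest point'' selection and the appeal to convexity of the $\mathcal{U}_{m'}$ in the ambient space, at the cost of redoing on the line what Proposition~\ref{DM_prop:invisible_half-line} and the convexity results already encode; the paper's version is shorter but leaves the final visibility claim somewhat terse. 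Both are valid.
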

\begin{proof}
Suppose there is a visible point 
$\mathbf{t}_0 = \mathbf{t}_I(\lambda_0,t^{(n+1)}_0, \ldots, t^{(M)}_0)$ on the 
above line, which we denote as $\mathcal{L}$. Let 
$\mathbf{t}_1 := \mathbf{t}_{I \cup \{m\}}(t^{(n+1)}_0, \ldots, t^{(M)}_0)$
be the nearest of the non-visible points specified in the assumption. 
Then there is some $m' \in \Omega \setminus I$, $m' \neq m$, such that 
$ \mathbf{t}_1 \in \mathcal{U}_{m'} \cap \mathcal{L}$. 
The line segment between $\mathbf{t}_0$ and $\mathbf{t}_1$ meets the boundary 
of the convex set $\mathcal{U}_{m'}$ at the point 
$\mathbf{t}_{I \cup \{m'\}}(t^{(n+1)}_0, \ldots, t^{(M)}_0)$. Since the latter 
would then be visible, we have a contradiction.
\end{proof}

\begin{proposition}
For each $I \in {\Omega \choose n}$, $n \in \{1,\ldots,M\}$, there are 
$t^{(n+1)}_0, \ldots, t^{(M)}_0 \in \mathbb{R}$ such that the line 
$\{ \mathbf{t}_I(\lambda,t^{(n+1)}_0, \ldots, t^{(M)}_0) \, | \, 
     \lambda \in \mathbb{R} \}$ 
has a visible part. 
\end{proposition}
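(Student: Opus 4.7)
The plan is to leverage the already-established fact, noted in the bullet list preceding Figure~\ref{DM_fig:M3plot}, that $\mathcal{P}_\Omega = \mathcal{U}_\Omega$, i.e.\ the unique point where all $M+1$ phases coincide is visible. The idea is simply to choose the higher coordinates $t^{(n+1)}_0,\ldots,t^{(M)}_0$ so that the line in question passes through this distinguished point.

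Concretely, let $\mathbf{t}_\Omega$ denote the unique point of $\mathcal{P}_\Omega$, with coordinates $(t^{(1)}_\Omega, \ldots, t^{(M)}_\Omega)$. Since $I \subset \Omega$ implies $\mathcal{P}_\Omega \subset \mathcal{P}_I$, the point $\mathbf{t}_\Omega$ lies in $\mathcal{P}_I$, and in the parametrization of $\mathcal{P}_I$ by free coordinates $(t^{(n)}, t^{(n+1)}, \ldots, t^{(M)})$ it is given by $\mathbf{t}_\Omega = \mathbf{t}_I(t^{(n)}_\Omega, t^{(n+1)}_\Omega, \ldots, t^{(M)}_\Omega)$. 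Setting $t^{(s)}_0 := t^{(s)}_\Omega$ for $s = n+1, \ldots, M$, the line
\[
   \{\, \mathbf{t}_I(\lambda, t^{(n+1)}_0, \ldots, t^{(M)}_0) \, | \, \lambda \in \mathbb{R} \,\}
\]
then contains $\mathbf{t}_\Omega$ at $\lambda = t^{(n)}_\Omega$. Because $\mathbf{t}_\Omega \in \mathcal{U}_\Omega \subset \mathcal{U}_I$, this point is visible, so the line has a visible part.

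There is essentially no obstacle here: the proposition is an immediate consequence of the visibility of $\mathcal{P}_\Omega$ together with the inclusion of affine subspaces $\mathcal{P}_\Omega \subset \mathcal{P}_I$. A completely equivalent route is to invoke the remark (in the same bullet list) that $\mathcal{U}_I$ is non-empty for every non-empty $I \subset \Omega$, pick any $\mathbf{t}_* \in \mathcal{U}_I$, and read off $t^{(s)}_0 := t^{(s)}(\mathbf{t}_*)$ for $s = n+1, \ldots, M$; the corresponding line then contains $\mathbf{t}_*$ by construction, and $\mathbf{t}_*$ is visible by the definition of $\mathcal{U}_I$.
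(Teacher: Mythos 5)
Your argument is correct as a proof of the literal statement, but it is not the route the paper takes. The paper's proof starts from the visible (and, for $I=\Omega$, automatically generic) point $\mathcal{P}_\Omega$ and applies Proposition~\ref{DM_prop:visible_points} iteratively: the first application produces visible segments of \emph{positive length} on lines inside the spaces $\mathcal{P}_J$ with $|J|=M$; one then picks a generic visible point on such a segment and repeats, descending until one reaches lines inside $\mathcal{P}_I$ with $|I|=n$. What that buys is a visible part consisting of a genuine segment of generic points of $\mathcal{P}_I$. Your construction, by contrast, exhibits only the single point $\mathbf{t}_\Omega$ (or a single chosen point of $\mathcal{U}_I$), and $\mathbf{t}_\Omega$ is maximally non-generic --- it lies in every $\mathcal{P}_J$, so the configuration there is the degenerate $(M+1)$-fold phase coincidence rather than an event genuinely attached to $I$. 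As you observe yourself, under this weak reading the proposition collapses into the earlier bullet-point remark that $\mathcal{U}_I \supset \mathcal{U}_\Omega$ is non-empty, which is presumably why the authors route the proof through Proposition~\ref{DM_prop:visible_points} instead of stopping there. So: your proof is valid for the statement as written and is more elementary, but it delivers strictly less than the paper's argument; at minimum you should flag that your ``visible part'' may a priori be a single degenerate point, and indicate how to upgrade it (e.g.\ by one appeal to Proposition~\ref{DM_prop:visible_points}) if a visible segment is what is actually needed downstream.
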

\begin{proof} 
Since $\mathcal{P}_\Omega$ is a visible point, we can use 
Proposition~\ref{DM_prop:visible_points} iteratively.
\end{proof}

Now we arrived at the following situation. 
 From Proposition~\ref{DM_prop:Bruhat}, we recall that 
\be
  \begin{array}{l}
   t^{(n-1)}_{K \setminus \{k_{n+1}\}} < t^{(n-1)}_{K \setminus \{k_{n-1}\}} < \cdots 
   < t^{(n-1)}_{K \setminus \{k_{1 + (n \, \mathrm{mod} \, 2)}\}}  \\[2ex]
   t^{(n-1)}_{K \setminus \{k_{2-(n \, \mathrm{mod} \, 2)}\}} < \cdots 
   < t^{(n-1)}_{K \setminus \{k_{n-2}\}} < t^{(n-1)}_{K \setminus \{k_n\}}
   \end{array} 
   \quad \mbox{for} \quad
   \begin{array}{l} 
     t^{(n)} < t^{(n)}_K \\[2ex]
     t^{(n)} > t^{(n)}_K \; .
   \end{array}     \label{DM_t^n-1_order_visible}
\ee
For all $t^{(n-1)}_{K \setminus \{k_i\}}$ that are absent in the respective 
chain, there is no visible event in the respective half-space ($t^{(n)} < t^{(n)}_K$, 
respectively $t^{(n)} > t^{(n)}_K$). 
Let $\mathbf{t}_0 \in \mathcal{P}_K$ be given by $t^{(n)} = t^{(n)}_K$ and fixing the higher 
variables to $t^{(n+1)}_0, \ldots, t^{(M)}_0$. 
Let $\mathbf{t}_0$ be visible and generic, and $t^{(n-1)}_{K \setminus \{k_i\}}$ in 
one of the chains in (\ref{DM_t^n-1_order_visible}). 
For $t^{(n)}$ close enough to $t^{(n)}_K$, and on the respective side according 
to (\ref{DM_t^n-1_order_visible}), every event in $\mathcal{P}_{K \setminus \{k_i\}}$ 
with remaining coordinates $t^{(n)}, t^{(n+1)}_0, \ldots, t^{(M)}_0$ is visible.

\begin{example}
For $n=3$ and $k_1 < k_2 < k_3 < k_4$, (\ref{DM_t^n-1_order_visible}) takes the form
\bez
    \begin{array}{l}
    y_{\{k_1,k_2,k_3\}} < y_{\{k_1,k_3,k_4\}} \\
    y_{\{k_2,k_3,k_4\}} < y_{\{k_1,k_2,k_4\}}
    \end{array} \quad \mbox{if} \quad 
    t \, \lessgtr \, t_{\{k_1,k_2,k_3,k_4\}} \; .
\eez
Fixing all variables $t^{(n)}$, $n>3$, for $t<t_{\{k_1,k_2,k_3,k_4\}}$  
close enough to $t_{\{k_1,k_2,k_3,k_4\}}$, the corresponding 
points of $\mathcal{P}_{\{k_1,k_2,k_3\}}$ and $\mathcal{P}_{\{k_1,k_3,k_4\}}$ are visible, 
but not the corresponding points of $\mathcal{P}_{\{k_2,k_3,k_4\}}$ and $\mathcal{P}_{\{k_1,k_2,k_4\}}$,
whereas for $t>t_{\{k_1,k_2,k_3,k_4\}}$ it is the other way around. 
All this describes a \emph{tree rotation}, see the plots in Figure~\ref{DM_fig:rotation}. 
\end{example}

We described line solitons as objects moving in the $xy$-plane (where $x=t^{(1)}$ and 
$y=t^{(2)}$). They evolve according to the KP equation\index{KP!equation} (with evolution parameter 
$t=t^{(3)}$), the first equation of the KP hierarchy\index{KP!hierarchy}. A higher KP hierarchy equation 
has one of the parameters $t^{(r)}$, $r>3$, as its evolution parameter. We do not 
consider the corresponding evolutions in this work, but it turned out that these 
parameters are important in order to classify the various evolutions. 
We showed that solitons from a subclass have the form of rooted 
binary trees with leaves extending to infinity in the $xy$-plane and evolving 
by right rotation as time $t$ proceeds. They all start with the same asymptotic form 
as $t \sim -\infty$ and end with the same asymptotic form as $t \sim +\infty$. These are 
the maximal and minimal element, respectively, of a Tamari lattice\index{Tamari!lattice}, 
and any generic evolution 
thus corresponds to a maximal chain. For a soliton configuration with $M+1$ leaves, 
which chain is realized depends on the values of the parameters $t^{(r)}$, $r=1,\ldots,M$.

\section{Higher Bruhat and higher Tamari orders}
\label{DM_sec:hBT}
According to Proposition~\ref{DM_prop:Bruhat}, a substantial role in the combinatorics 
underlying the tree-shaped line solitons is played by the order relations (\ref{DM_t^n-1_order}). 
They are at the roots of the generalization by Manin and Schechtman of the 
weak Bruhat order\index{order!weak} 
on the set of permutations of $[m]$ to `higher Bruhat orders'\index{order!higher Bruhat} 
\cite{DM_Manin+Schecht86a,DM_Manin+Shekhtman86b,DM_Manin+Schecht89}, also see \cite{DM_Ziegler93}. 
In the following subsection we recall some definitions and results mainly from \cite{DM_Ziegler93}. 
In section~\ref{DM_subsec:hT} we introduce `higher Tamari orders'.

\subsection{Higher Bruhat orders}
\label{DM_subsec:hB}
Let $n,N \in \mathbb{N}$ with $1 \leq n \leq N-1$. An element $K \in {[N] \choose n+1}$ will 
be written as $K=\{k_1, \ldots, k_{n+1}\}$ with $k_1 < \cdots < k_{n+1}$. 
$P(K)$ denotes the \emph{packet}\index{packet} of $K$, i.e. the set of $n$-subsets of $K$. 
The \emph{lexicographic order}\index{order!lexicographic} on $P(K)$ is given by  
$K \setminus \{k_{n+1}\}, K \setminus \{k_n\}, \ldots, K \setminus \{k_1\}$. 
A \emph{beginning segment} of $P(K)$ has the form $\{ K \setminus \{k_{n+1}\}, K \setminus \{k_n\}, 
\ldots, K \setminus \{k_j\} \}$ for some $j$. An \emph{ending segment} is of the form 
$\{ K \setminus \{k_j\}, K \setminus \{k_{j-1}\}, \ldots, K \setminus \{k_1\} \}$.\footnote{$\emptyset$ 
and $P(K)$ are considered as being both, beginning and ending. }

A subset $U \subset {[N] \choose n+1}$ is called \emph{consistent} if its 
intersection with any $(n+1)$-packet\footnote{An $(n+1)$-packet is the packet of 
some element of ${[N] \choose n+2}$. } 
is either a beginning or an ending segment.
The \emph{higher Bruhat order}\index{order!higher Bruhat} $B(N,n)$ is the set of consistent 
subsets of ${[N] \choose n+1}$, 
ordered by single-step inclusion\index{single-step inclusion}\index{order!by inclusion}.\footnote{By 
a theorem of Ziegler \cite{DM_Ziegler93}, this 
definition is equivalent to the original one of Manin and Schechtman 
 \cite{DM_Manin+Schecht86a,DM_Manin+Shekhtman86b},
also see \cite{DM_Felsner+Weil00}. For finite sets $U,U'$, \emph{single-step inclusion} 
is defined by $U \subset U'$ and $|U'|=|U|+1$. }

\begin{example}
The consistent subsets of ${[3] \choose 2}$ are $\emptyset$, $\{\{1,2\}\}$, $\{\{2,3\}\}$, 
$\{\{1,2\}, \{1,3\} \}$, $\{ \{1,3\}, \{2,3\} \}$, 
$\{\{1,2\}, \{1,3\}, \{2,3\}\}$. 
Single-step inclusion leads to $B(3,1)$, which has a hexagonal Hasse diagram\index{Hasse diagram} 
(also see Figure~\ref{DM_fig:B31,T31} in Section~\ref{DM_subsec:hT}). 
\end{example}

A linear order $\rho$ on ${[N] \choose n}$ (which may be regarded as a permutation of ${[N] \choose n}$) 
is called \emph{admissible} if, for every $K \in {[N] \choose n+1}$,  
the packet of $K$ appears in it either in lexicographic\index{order!lexicographic} 
or in reverse lexicographic order. 
Let $A(N,n)$ be the set of admissible linear orders of ${[N] \choose n}$. 
Two elements $\rho, \rho'$ of $A(N,n)$ are \emph{elementarily equivalent},  
if they differ only by the exchange of two neighboring elements that are not contained 
in a common packet. The resulting equivalence relation will be denoted by $\sim$. 
For each $\rho \in A(N,n)$, the \emph{inversion set}\index{inversion set} $\mathrm{inv}(\rho)$ is the set
of all $K \in {[N] \choose n+1}$ for which $P(K)$ appears in reverse lexicographic order in $\rho$.
We have $\rho \sim \ \rho'$ iff $\mathrm{inv}(\rho) = \mathrm{inv}(\rho')$, so that the inversion 
set only depends on the equivalence class of $\rho$. All this results in a poset isomorphism
$U = \mathrm{inv}(\rho) \mapsto [\rho]$ between $B(N,n)$ and $A(N,n)/\!\!\sim$.\footnote{The difficult 
part is to show that the set of inversion sets coincides with the set of consistent sets, 
see \cite{DM_Ziegler93}. } 

For $[\rho] \in A(N,n)/\!\!\sim$, let $Q[\rho]$ be the intersection of all \emph{linear} orders 
in $[\rho]$, i.e. the partial order on ${[N] \choose n}$  given by $I' < I$ iff 
$I' <_\sigma I$ for all $\sigma \in [\rho]$. The set of linear extensions of $Q[\rho]$ 
coincides with $[\rho]$. We set $Q(U) := Q[\rho]$ where $U=\mathrm{inv}(\rho)$.

Of great help for the construction of higher Bruhat orders\index{order!higher Bruhat} is the existence 
\cite{DM_Manin+Schecht86a,DM_Manin+Shekhtman86b,DM_Ziegler93} of a natural bijection 
between $A(N,n)$ and the set of maximal chains of $B(N,n-1)$, which we describe next. 
With $\rho = (I_1,\ldots,I_s) \in A(N,n)$ (where $I_i \in {[N] \choose n}$ and $s={N \choose n}$) 
we associate the chain of consistent sets 
$\emptyset \to \{I_1\} \to \{I_1,I_2\} \to \cdots \to \{I_1,\ldots,I_s\} = {[N] \choose n}$
in $B(N,n-1)$. Conversely, given a maximal chain 
$\emptyset \to U_1 \to U_2 \to \cdots \to {[N] \choose n}$ 
of consistent sets in $B(N,n-1)$, these are ordered by single step inclusion\index{single-step inclusion}, hence  
$U_{r+1} \setminus U_r = \{I_r\}$ with some $I_r \in {[N] \choose n}$. Thus we obtain 
an admissible linear order $(I_1,\ldots,I_s) \in A(N,n)$. 
If, for some $K \in {[N] \choose n+1}$, $P(K)$ appears in (reverse) 
lexicographic order\index{order!lexicographic} in $\rho$, 
then $U_r \cap P(K)$ is a beginning (ending) segment, for $r=0,1,\ldots,s$. 
Conversely, if $U$ is an element of a maximal chain of $B(N,n-1)$, and if 
$U \cap P(K)$ is a beginning (ending) segment, then this holds for all 
elements of this chain. Thus  $P(K)$ appears in the corresponding $\rho$ in 
(reverse) lexicographic order\index{order!lexicographic}.
Furthermore, if two maximal chains have a common edge $U \stackrel{I}{\rightarrow} U \cup \{I\}$, 
then, obviously, both contain all packets having $I$ as a member in the same way (i.e. 
lexicographically, respectively reverse lexicographically). 

With the help of these results, all higher Bruhat orders\index{order!higher Bruhat} can be constructed iteratively, starting 
from the highest, i.e. $B(N,N-1)$. The latter consists of only two elements, 
$\emptyset$ and $\{[N]\}$.

Suppose we have constructed $B(N,n)$. Its elements are consistent sets of the form 
$\{K_1, \ldots, K_r\}$, where $K_i \in {[N] \choose n+1}$ 
and $r \in \{0,1,\ldots,{N \choose n+1}\}$ ($\emptyset$ if $r=0$). 
Associated with each such consistent set $U$ is an equivalence class $[\rho] \in A(N,n)/\!\!\sim$ 
such that $U=\mathrm{inv}(\rho)$. 
For each $\rho \in [\rho]$ we construct the corresponding maximal chain of $B(N,n-1)$, as explained 
above. 
The collection of all such maximal chains constitutes $B(N,n-1)$. Its elements are 
consistent sets $\{I_1,\ldots,I_r\}$, $r\in \{0,1,\ldots,s\}$. 
Now we can continue to construct $B(N,n-2)$, and so forth.

Having arrived at $B(N,1)$, the weak Bruhat order\index{order!weak} on the permutation group $S_N$, 
we can even proceed once more.
We consider a permutation $\pi = (\pi_1,\pi_2,\ldots,\pi_N) \in S_N$ as an order 
$\{\pi_1\} < \{\pi_2\} < \cdots < \{\pi_N\}$, which in turn determines 
the chain
$\emptyset \to \{\{\pi_1\}\} \to \{\{\pi_1\},\{\pi_2\}\}
\to \cdots \to \{\{\pi_1\},\ldots,\{\pi_N\}\} = {[N] \choose 1}$. 
These are the maximal chains of 
$B(N,0)$, which is isomorphic to the Boolean lattice\index{lattice!Boolean} of subsets of $[N]$ and forms an 
$N$-cube.\footnote{More generally, the elements of $B(N,n)$ 
can be represented as sets of $n$-faces of the $N$-cube \cite{DM_Voev+Kapr90,DM_Thomas03}. }

In the following, we represent a higher Bruhat order\index{order!higher Bruhat} $B(N,n)$ 
by a diagram, suppressing the labels of the vertices and expressing a maximal chain 
$\emptyset \to \{K_1\} \to \{K_1,K_2\} \to \cdots \to \{K_1,\ldots,K_q\} 
  = {[N] \choose n+1}$, $q={N \choose n+1}$, graphically as
\bez
     \bullet \stackrel{K_1}{\longrightarrow} \bullet \stackrel{K_2}{\longrightarrow} \bullet
     \; \cdots \; \bullet \stackrel{K_q}{\longrightarrow} \bullet \; .
\eez
The edges are thus labelled by the sets $K_i$, which are sequentially added to the preceding 
vertex, starting with the empty set. The vertices are thus given by sets of the form $\{K_1,\ldots,K_r\}$, $r \in \{0,1,\ldots,q\}$. 
They are ordered by single step inclusion\index{single-step inclusion}, 
and we have $(K_1,\ldots,K_q) \in A(N,n+1)$.

\begin{remark}
The weak Bruhat order\index{order!weak} $B(N,1)$ is a lattice and can be visualized 
as a polytope in $N-1$ Euclidean dimensions, called \emph{permutohedron}. 
Not all higher Bruhat orders\index{order!higher Bruhat} are lattices \cite{DM_Ziegler93} and not all 
can be realized as polytopes \cite{DM_Felsner+Ziegler01}.  
\end{remark}

In the context of KP line solitons\index{KP!soliton}, the relevance of higher Bruhat orders\index{order!higher Bruhat} is evident from Proposition~\ref{DM_prop:Bruhat}, 
as already mentioned there.
For fixed $M \in \mathbb{N}$ and parameters $p_i,c_i$, $i=1,\ldots,M+1$, the order 
$p_1 < \cdots < p_{M+1}$ induces an order on the `critical values' 
$t^{(n-1)}_J$, $J \in {\Omega \choose n}$, according to the following rule. For $t^{(n)} < t^{(n)}_K$,  
the values $t^{(n-1)}_J$, $J \in P(K)$, are ordered lexicographically, and for $t^{(n)} > t^{(n)}_K$
they are ordered reverse lexicographically. Via the bijection $I \mapsto t^{(|I|-1)}_I$ of 
subsets of $\Omega = [M+1]$ and the set of critical values, this corresponds to admissible 
permutations on ${\Omega \choose n}$. Without further restriction of the parameters, the 
resulting partial order is $B(M+1,n)$.

\subsubsection{How to obtain the poset $Q(U)$ for a consistent set $U$: an example}
\label{DM_sssec:B(4,2)}
This subsection explains in an elementary way the construction of the poset $Q(U)$ 
for a consistent set $U$ in the case $N=4$. There is only one 3-packet, namely 
\bez  
     {[4] \choose 3} = \{ \{1,2,3\}, \{1,2,4\}, \{1,3,4\}, \{2,3,4\} \} \, ,
\eez 
and thus the following 8 consistent subsets of ${[4] \choose 3}$, 
\bez
   && \emptyset, \; \{\{1,2,3\}\}, \; \{ \{1,2,3\}, \{1,2,4\} \}, 
      \; \{\{1,2,3\}, \{1,2,4\}, \{1,3,4\}\}, \\
   && \{\{1,2,3\}, \{1,2,4\}, \{1,3,4\}, \{2,3,4\}\}, \\ 
   && \{\{2,3,4\}\}, \; \{\{1,3,4\}, \{2,3,4\}\}, \; \{\{1,2,4\}, \{1,3,4\}, \{2,3,4\}\} \; .
\eez 
Single-step inclusion\index{single-step inclusion} results in $B(4,2)$, which has an octagonal 
Hasse diagram\index{Hasse diagram} 
(see Figure~\ref{DM_fig:B42Qs}).
The packets of the elements of ${[4] \choose 3}$ are given by 
\bez
  && P(\{1,2,3\})=\{ \{1,2\}, \{1,3\}, \{2,3\} \} \, , \quad
     P(\{1,2,4\})=\{ \{1,2\}, \{1,4\}, \{2,4\} \} \, , \\ 
  && P(\{1,3,4\})=\{ \{1,3\}, \{1,4\}, \{3,4\} \} \, , \quad 
     P(\{2,3,4\})=\{ \{2,3\}, \{2,4\}, \{3,4\} \} \;.
\eez 
For each consistent subset $U$ of ${[4] \choose 3}$, we consider a 
table, which displays the packet of each element of $U$ downwards in reverse lexicographic 
order\index{order!lexicographic} (and the remaining packets in lexicographic order). 
The left one of the following two tables describes the case $U=\emptyset$, hence all packets are in 
lexicographic order. 
\begin{center}
\begin{tabular}[t]{c|c|c|c}
1 2 3 & 1 2 4 & 1 3 4 & 2 3 4 \\
\hline 
1 2 & 1 2 & 1 3 & 2 3 \\
1 3 & 1 4 & 1 4 & 2 4 \\
2 3 & 2 4 & 3 4 & 3 4 
\end{tabular}
\hspace{1cm}
\begin{tabular}[t]{c|c|c|c}
1 2 3 & 1 2 4 & 1 3 4 & 2 3 4 \\
\hline 
2 3 & 2 4 & 1 3 & 2 3 \\
1 3 & 1 4 & 1 4 & 2 4 \\
1 2 & 1 2 & 3 4 & 3 4 
\end{tabular}
\end{center}
 From the table, where e.g. $12$ stands for $\{1,2\}$, we read off\footnote{For example, the top 
node can only be an entry from the first row of the table. But the first column only leaves us 
with $12$. It is also obvious that $34$ is the bottom node. Furthermore, we see that from $13$ to $24$ 
there are two ways, via $14$ respectively $23$.}
cover relations and deduce a poset, drawn as a Hasse diagram\index{Hasse diagram}.
This leads to the very first poset (of both horizontal chains) in Figure~\ref{DM_fig:B42Qs}. 
It has two linear extensions, which are elements $\rho, \rho' \in A(4,2)$, one with $14, 23$, 
the other one with $23, 14$ instead. Since $14$ and $23$ belong to different packets, $\rho \sim \rho'$.
Since no packet is in reverse lexicographic order\index{order!lexicographic}, 
$[\rho] \in A(4,2)/\!\!\sim$ corresponds to the 
empty (consistent) set. The poset is $Q(\emptyset)$. 

\begin{figure}
\begin{center}
\includegraphics[scale=.3]{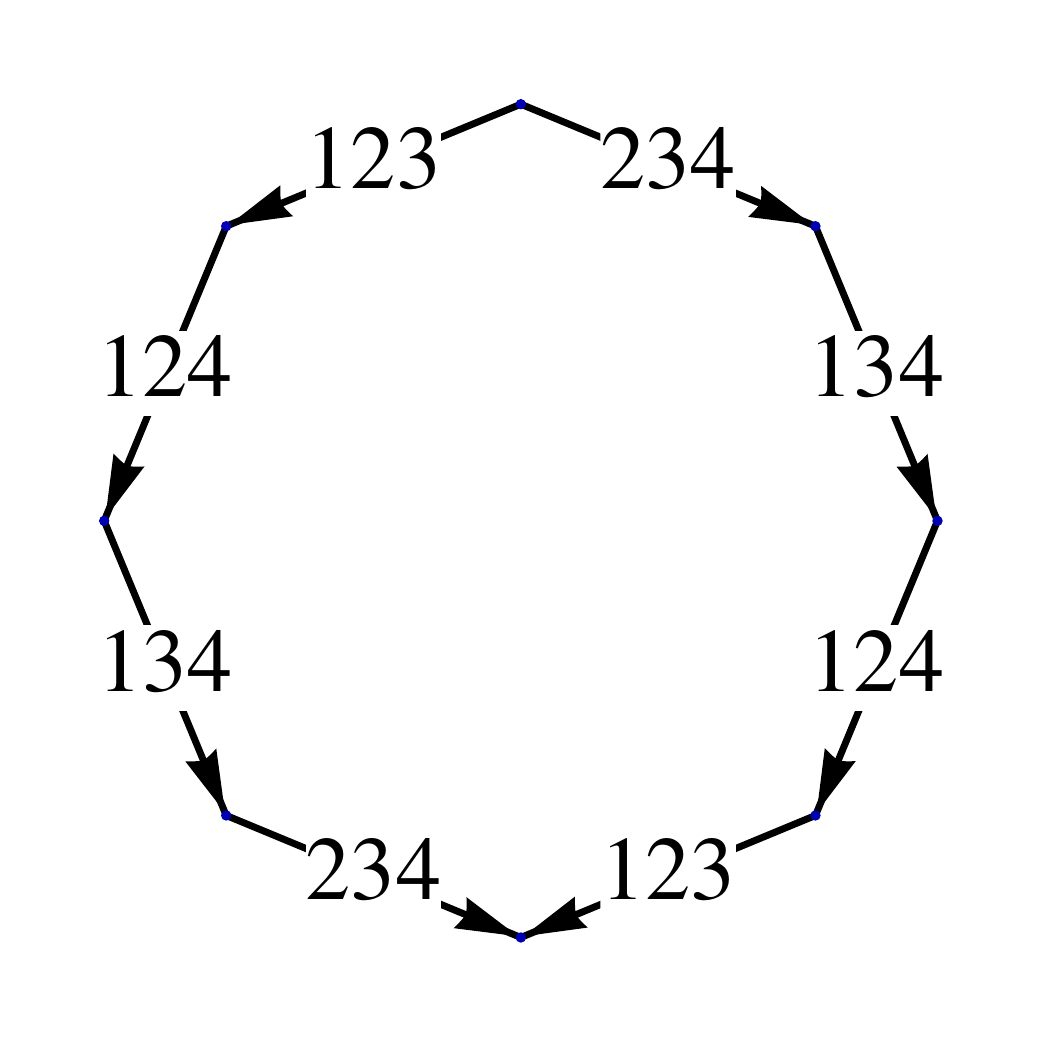}
\includegraphics[scale=.47]{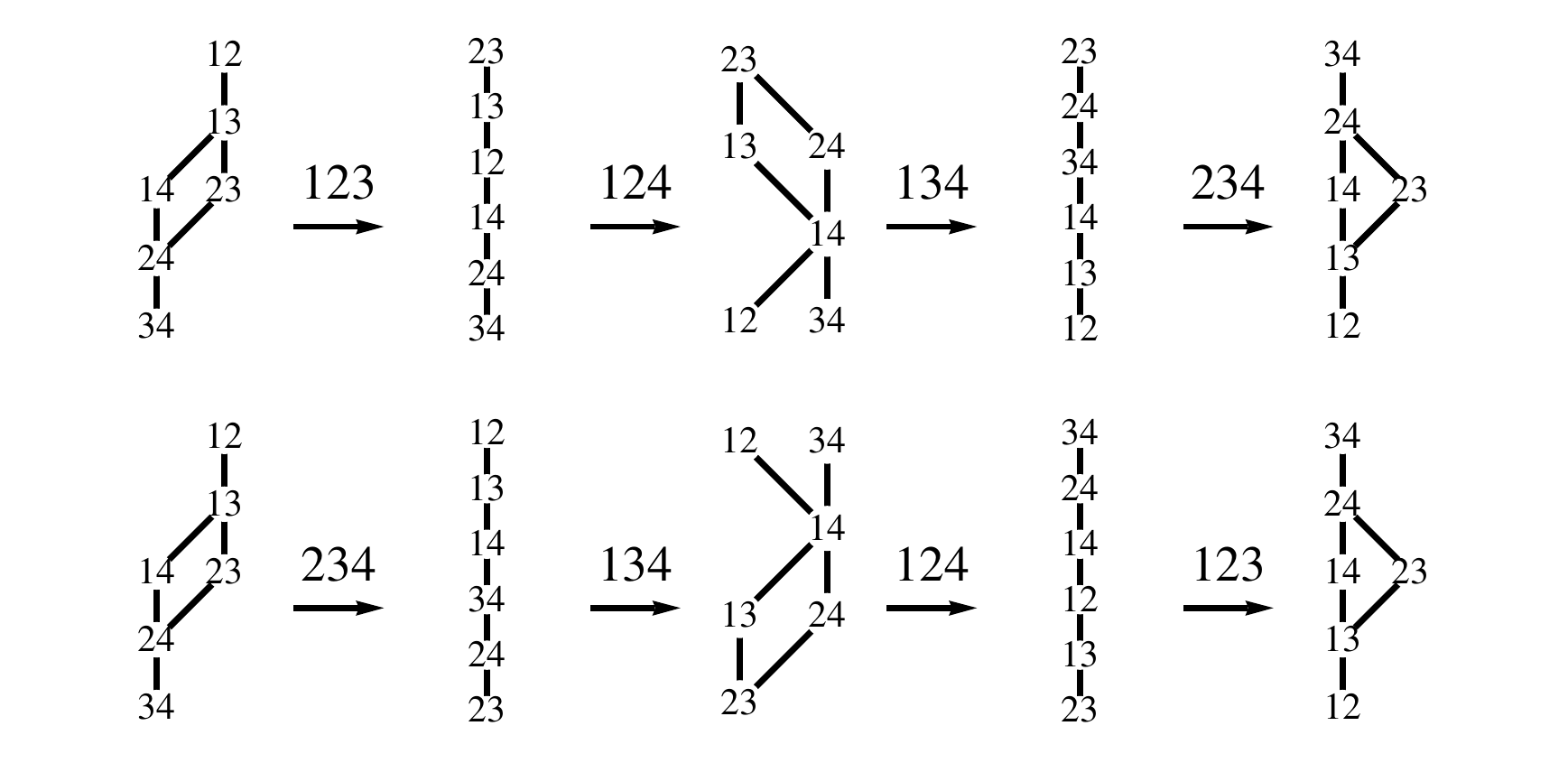} 
\parbox{15cm}{
\caption{$B(4,2)$ and its two maximal chains in terms of the $Q$-posets, which 
can be constructed in the way explained in section~\ref{DM_sssec:B(4,2)}. \label{DM_fig:B42Qs} }
}
\end{center} 
\end{figure}

The second table above decribes the case $U = \{ \{1,2,3\}, \{1,2,4\} \}$. Its evaluation 
leads to $Q(U)$, the third poset in the first horizontal chain in Figure~\ref{DM_fig:B42Qs}. It has
four linear extensions. Since $13$ and $24$, as well as $12$ and $34$, belong 
to different packets, all these extensions are equivalent, hence $U$ determines a single element 
of $A(4,2)/\!\!\sim$. 
Evaluating the remaining tables, we finally obtain the two chains of posets in Figure~\ref{DM_fig:B42Qs}.

\subsection{Higher Tamari orders}
\label{DM_subsec:hT}
Let $K \in {[N] \choose n+1}$. In the KP line soliton\index{KP!soliton} context in Section~\ref{DM_sec:tropical}, 
where $N=M+1$, we found the following rules concerning non-visible critical events. 
A point $\mathbf{t}_{K \setminus \{k\}}$ is non-visible if either (1) $t^{(n)} < t^{(n)}_K$ 
and $k \in K_<$, or (2) $t^{(n)} > t^{(n)}_K$ and $k \in K_>$. In the first case, the critical values 
$t^{(n)}_{K \setminus \{k\}}$ are ordered lexicographically, in the second case reverse lexicographically.
This induces a corresponding combinatorial rule on the sets that enumerate the critical values, 
and we can resolve the notion of `non-visibility' from the special KP line soliton\index{KP!soliton} 
context as follows. 

\begin{definition} 
\label{DM_def_non_vis}
Let $\rho \in A(N,n)$.
$I \in \rho$ is called \emph{non-visible} in $\rho$ if there is a $K \in {[N] \choose n+1}$ 
and a $k \in K$ such that $I=K \setminus \{k\}$, and if either 
$K \notin \mathrm{inv}(\rho)$ and $k \in K_<$, or 
$K \in \mathrm{inv}(\rho)$ and $k \in K_>$ (with $K_<$ and $K_>$ defined in 
Section~\ref{DM_sec:tropical}). 
$I$ is called \emph{visible} in $\rho$ if it is \emph{not} non-visible in $\rho$. 
\end{definition}

Since $\mathrm{inv}(\rho)$ only depends on the equivalence class of $\rho$, this definition 
induces a notion of non-visibility (visibility) in $[\rho] \in A(N,n)/\!\!\sim$.
Moreover, since an element of an admissible linear order $\rho \in A(N,n)$ corresponds to an edge 
$U \stackrel{I}{\rightarrow} U'$ of a maximal chain of $B(N,n-1)$ (cf. Section~\ref{DM_subsec:hB}), 
the above definition induces a notion of non-visibility of such an edge: 
$\; U \stackrel{I}{\rightarrow} U'$ is \emph{non-visible} in a maximal chain of $B(N,n-1)$ 
if $I = K \setminus \{k\}$ for some $K \in {[N] \choose n+1}$, and 
$k \in K_<$ if $U' \cap P(K)$ is a beginning segment, 
$k \in K_>$ if $U' \cap P(K)$ is an ending segment. 
If $U \stackrel{I}{\rightarrow} U'$ is non-visible in one maximal chain of $B(N,n-1)$, 
then it is non-visible in \emph{every} maximal chain that contains it. Therefore we can drop 
the reference to a maximal chain. 
\vskip.1cm

We have seen in Section~\ref{DM_subsec:hB} that any $U \in B(N,n)$ determines a poset $Q(U)$. 
Eliminating all non-visible elements (in admissible linear orders) of $Q(U)$, by application of the 
rules in the preceding definition, results in a subposet that we denote by $R(U)$. 

\begin{proposition}
\label{DM_prop:R(U)=R(U')}
An edge $U \stackrel{K}{\rightarrow} U'$ in $B(N,n)$ is non-visible iff $R(U) = R(U')$. 
\end{proposition}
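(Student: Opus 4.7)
My plan is to reduce the biconditional to an intermediate characterization: $R(U)=R(U')$ if and only if every $I\in P(K)$ is non-visible in $U$ (equivalently in $U'$) due to some packet $K''\neq K$. I would then match this element-wise condition with the definition of edge non-visibility given just before the proposition.

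The key observation for the first reduction is that adding $K$ to a consistent set only reverses the relative order of the packet $P(K)$ in the associated admissible orders, while leaving all other packets unchanged. Consequently the visibility of any $J\in {[N]\choose n}\setminus P(K)$ is the same in $U$ and $U'$, and the restrictions of $Q(U)$ and $Q(U')$ to this complement coincide. For $I\in P(K)$ the condition for visibility with respect to $K$ itself is flipped (the roles of $K_<$ and $K_>$ swap), while the conditions with respect to any other containing packet are preserved. Thus each $I\in P(K)$ lies in at most one of $R(U),R(U')$, and lies in neither precisely when some $K''\neq K$ makes $I$ non-visible. Since the posets already agree on the complement of $P(K)$, the full equality $R(U)=R(U')$ holds iff no $I\in P(K)$ appears in either, that is, iff every $I\in P(K)$ is non-visible via some $K''\neq K$.

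For the second reduction I need to match this element-wise condition with the existence of an $L\in {[N]\choose n+2}$ with $K=L\setminus\{l\}$ satisfying the beginning/ending segment condition of edge non-visibility. The forward direction takes $K''_I:=L\setminus\{k\}=I\cup\{l\}$ for each $I=K\setminus\{k\}\in P(K)$: these lie in $P(L)$, and a position-parity check verifies that $l$ sits in $K''_{I,<}$ or $K''_{I,>}$ exactly as required by whether $K''_I\in U$, which is itself determined by the location of $K''_I$ relative to $K$ in the lex order on $P(L)$ and by the beginning/ending segment condition. The reverse direction is the delicate part: starting from independent witnesses $K''_I=I\cup\{l_I\}$, I would use the consistency of $U$ applied to the packets $P(K\cup\{l\})$ for varying $l$, together with the parity constraints of Proposition~\ref{DM_prop:Bruhat}, to force all $l_I$ to equal a common $l$; then $L:=K\cup\{l\}$ is the desired $(n+2)$-set, and the beginning/ending segment condition on $U'\cap P(L)$ can be read off from the inclusion statuses of the $K''_I=L\setminus\{k\}$.

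The main obstacle is the reverse direction above, namely showing that the witnessing packets $K''_I$ necessarily share a common extra element $l$. A direct combinatorial route uses consistency of $U$ across the packets $P(K\cup\{l\})$ for varying $l$; an attractive alternative is to recognize the edge label $K\in{[N]\choose n+1}$ as an element of an admissible order in $A(N,n+1)$ corresponding to a maximal chain of $B(N,n)$ through the edge, so that edge non-visibility is literally the non-visibility of $K$ at level $n+1$, and the biconditional becomes a self-similar ``level-down'' statement amenable to induction on $n$. Either way, the core technical content is careful bookkeeping of the $K_</K_>$ parities under removal of one element from $L$ and intersection with a consistent subset.
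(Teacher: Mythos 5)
Your first reduction is sound and is essentially what the paper's proof does implicitly: adding $K$ to the inversion set reverses only the internal order of $P(K)$ and flips only the visibility test relative to $K$ itself, so each $I\in P(K)$ lies in at most one of $R(U),R(U')$, and $R(U)=R(U')$ holds precisely when every $I\in P(K)$ is made non-visible by some witness $K''\neq K$. Your forward direction is also the paper's argument (its case (a)): the single witness $L=K\cup\{l\}$ for edge non-visibility furnishes, for each $I=K\setminus\{k\}$, the witness $K''=L\setminus\{k\}=I\cup\{l\}$, after the parity check on whether $l$ lands in $K''_<$ or $K''_>$ and whether $K''$ lies in $U$.

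The gap is the reverse direction, which you explicitly leave open, and the strategy you sketch for it aims at the wrong statement. You try to force the independent witnesses $K''_I=I\cup\{l_I\}$ to share a common $l$; nothing forces distinct $I$'s witnesses to coincide, and no such coincidence is needed, since edge non-visibility only asks for one $L$ with the segment property. The paper closes this direction contrapositively: if the edge $U\stackrel{K}{\rightarrow}U'$ is \emph{visible}, then for every $l\notin K$ the set $U'\cap P(K\cup\{l\})$ is an ending segment when $l\in (K\cup\{l\})_<$ and a beginning segment when $l\in (K\cup\{l\})_>$, and the same four-case parity analysis you already set up shows that \emph{no} $I\in P(K)$ is non-visible with respect to \emph{any} $K''=I\cup\{l\}$; since the test relative to $K$ itself is passed by $I=K\setminus\{k\}$ in exactly one of $U,U'$ (according to whether $k\in K_>$ or $k\in K_<$), each $I\in P(K)$ then belongs to exactly one of $R(U),R(U')$, whence $R(U)\neq R(U')$. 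In other words there is a dichotomy — either all of $P(K)$ is non-visible (edge non-visible) or none of it is (edge visible) — and once this is observed your ``common $l$'' problem disappears; the bookkeeping you describe suffices, but as written your proposal establishes only one implication.
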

\begin{proof} 
We will show that if $U \stackrel{K}{\rightarrow} U'$ is non-visible, then all elements of $P(K)$ 
are non-visible
in (any linear extension of) $Q(U)$ and $Q(U')$, and if $U \stackrel{K}{\rightarrow} U'$ is visible, 
then $R(U)$ and $R(U')$ differ by elements of $P(K)$. 
Let $I \in P(K)$, i.e. $K = I \cup \{k\}$ with some $k \in [N] \setminus I$,  
and $l \in [N] \setminus K$. We set $L = K \cup \{l\} \in {[N] \choose n+2}$ and 
$K' = L \setminus \{k\} \in {[N] \choose n+1}$. \\
Let $l \in L_<$. We recall that in this case \\
\hspace*{.3cm}(a) $U' \cap P(L)$ is a beginning segment if $U \stackrel{K}{\rightarrow} U'$ is non-visible, \\
\hspace*{.3cm}(b) $U' \cap P(L)$ is an ending segment if $U \stackrel{K}{\rightarrow} U'$ is visible. \\
If $k > l$, then $l \in {K'}_>$. In case (a) we have $K' \in U, U'$, hence 
$I = K' \setminus \{l\}$ is non-visible. In case (b) we have $K' \notin U, U'$, hence 
$I = K' \setminus \{l\}$ is \emph{not} non-visible with respect to $K'$. 
If $k < l$, then $l \in {K'}_<$. In case (a) we have $K' \notin U, U'$ and 
$I$ is again non-visible. In case (b) we have $K' \in U, U'$ and 
$I$ is again \emph{not} non-visible with respect to $K'$. 

 For $l \in L_>$, we have to exchange `beginning' and `ending' in the above conditions (a) and (b). 
If $k>l$, we have $l \in {K'}_<$. Then $I$ is 
non-visible in case (a) and not non-visible with respect to $K'$ in case (b). If $k<l$, then $l \in {K'}_>$. 
Again, $I$ is non-visible in case (a) and not non-visible with respect to $K'$ in case (b). 

We conclude from case (a) that non-visibility of $U \stackrel{K}{\rightarrow} U'$ implies that 
all elements of $P(K)$ are non-visible in $Q(U)$, as well as in $Q(U')$. 
Since $K$ and all the $K'$ exhaust the elements of ${[N] \choose n+1}$ whose packets contain $I$, 
it follows from case (b) that $I \in R(U)$ and $I \notin R(U')$ if $k \in K_>$, whereas 
$I \notin R(U)$ and $I \in R(U')$ if $k \in K_<$.
\end{proof}

\begin{example}
Let $N=6$, $n=3$ and $K=1346$, which stands for $\{1,3,4,6\}$. There are only two 
elements of ${[6] \choose 5}$ such that their packet contains $K$. These are 
$L_1 = 12346$ and $L_2=13456$, hence $l_1 = 2 \in (L_1)_<$ and $l_2 = 5 \in (L_2)_<$. \\
If $U \stackrel{K}{\rightarrow} U'$ is non-visible, then $U' \cap P(L_1)$ 
is a beginning segment: $U'\cap P(L_1)  = \{1234, 1236, 1246, 1346\}$.
The left table below displays the corresponding information for the construction 
of $Q(U')$ and $R(U')$, obtained via Definition~\ref{DM_def_non_vis}. Non-visible elements 
are marked in red. We see that the whole packet of $K$ (marked in the table in light red) is already 
non-visible as a consequence of the information obtained from the other elements of ${[6] \choose 4}$. 
Hence $R(U')=R(U)$.  \\
If $U \stackrel{K}{\rightarrow} U'$ is visible, then $U' \cap P(L_1)$ 
is an ending segment: $U'\cap P(L_1)  = \{1346, 2346\}$. In this case we obtain the right table. 
$R(U)$ contains the elements 134 and 146 of $P(K)$.
They are not present in $R(U')$, in which the new complementary elements 136 and 346 of $P(K)$ 
show up. For $L_2$ an analogous discussion applies. Finally we can conclude that $R(U') \neq R(U)$.
\definecolor{DM_LightRed}{rgb}{1.0,0.5,0.5}
\begin{center}
\begin{tabular}[t]{c|c|c|c|c}
1234 & 1236 & 1246 & 1346 & 2346 \\
\hline 
234 & 236 & 246 & {\color{DM_LightRed}346} & 234  \\
{\color{red}134} & {\color{red}136} & {\color{red}146} & {\color{DM_LightRed}146} & {\color{red}236} \\
124 & 126 & 126 & {\color{DM_LightRed}136} & 246 \\
{\color{red}123} & {\color{red}123} & {\color{red}124} & {\color{DM_LightRed}134} & {\color{red}346}
\end{tabular}
\hspace{1cm}
\begin{tabular}[t]{c|c|c|c|c}
1234 & 1236 & 1246 & 1346 & 2346 \\
\hline 
123 & 123 & 124 & {\color{cyan}346} & 346 \\
{\color{red}124} & {\color{red}126} & {\color{red}126} & {\color{red}146} & {\color{red}246} \\
134 & 136 & 146 & {\color{cyan}136} & 236 \\
{\color{red}234} & {\color{red}236} & {\color{red}246} & {\color{red}134} & {\color{red}234}
\end{tabular}
\end{center}
\end{example}

\begin{corollary}
\label{DM_cor:no_cycles}
If $U_0 \stackrel{K_1}{\rightarrow} U_1 \stackrel{K_2}{\rightarrow} \cdots 
\stackrel{K_r}{\rightarrow} U_r$ is any (not necessarily maximal) chain in $B(N,n)$, 
containing at least one visible edge, then $R(U_0) \neq R(U_r)$.
\end{corollary}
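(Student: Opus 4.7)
The plan is to exhibit a $\mathbb{Z}$-valued invariant on $B(N,n)$ that is constant across non-visible edges and strictly monotone across visible ones; since $R$ is a function of $U$, such monotonicity will preclude $R(U_0)=R(U_r)$ as soon as any edge in the chain is visible.

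Concretely, I will set $\sigma(I):=\sum_{i\in I} i$ and define
\[
   W(U) \;:=\; \sum_{I \in R(U)} \sigma(I) .
\]
Proposition~\ref{DM_prop:R(U)=R(U')} guarantees that $R(U)$, and hence $W(U)$, is unchanged across any non-visible edge, so only the visible edges matter. To analyse a visible edge $U \stackrel{K}{\rightarrow} U'$, I will invoke the explicit swap extracted in the proof of that same proposition:
$R(U) \cap P(K) = \{K \setminus \{k\} : k \in K_>\}$ and
$R(U') \cap P(K) = \{K \setminus \{k\} : k \in K_<\}$,
while $R(U) \setminus P(K) = R(U') \setminus P(K)$. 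Using $\sigma(K\setminus\{k\}) = \sigma(K)-k$ and collecting terms gives
\[
  W(U') - W(U) \;=\; (|K_<|-|K_>|)\,\sigma(K) \;+\; \sum_{k \in K_>} k \;-\; \sum_{k \in K_<} k .
\]

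The core step is to show this difference has a fixed nonzero sign that depends only on the parity of $n$. When $n$ is odd, $|K_<|=|K_>|$, so the first summand vanishes and the remainder pairs up as $\sum_{j=1}^{(n+1)/2}(k_{2j}-k_{2j-1})>0$. When $n$ is even, $|K_<|-|K_>|=-1$, and a direct cancellation using $\sigma(K)=\sum_{k\in K_<} k + \sum_{k\in K_>} k$ collapses the expression to $-2\sum_{k\in K_<} k<0$. Thus at every visible edge $W$ changes strictly, and the direction is the same for all visible edges of a given $B(N,n)$. Summing across $U_0 \to \cdots \to U_r$, the total increment of $W$ is a sum of nonzero terms of identical sign; the presence of at least one visible edge forces this sum to be nonzero, whence $R(U_0)\neq R(U_r)$.

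The main obstacle is the sign verification just sketched. The formula for $W(U')-W(U)$ is not obviously of fixed sign; what makes it work is the interleaved structure of $K_<$ and $K_>$ inside $K$ (alternating positions counted from the right, with the top element $k_{n+1}$ always in $K_>$). Splitting into the two parity cases is therefore unavoidable, but in each case the required cancellation is elementary once the correct pairing of the elements of $K_<$ and $K_>$ is identified. All remaining bookkeeping, namely that $R$ depends only on $U$ and that non-visible edges contribute zero, is supplied directly by Proposition~\ref{DM_prop:R(U)=R(U')}.
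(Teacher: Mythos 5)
Your proof is correct, but it takes a genuinely different route from the paper's. The paper argues with a single persistent witness: after reducing (via Proposition~\ref{DM_prop:R(U)=R(U')}) to the case where the first edge $U_0 \stackrel{K_1}{\rightarrow} U_1$ is visible, it picks $k \in (K_1)_>$ and sets $I = K_1 \setminus \{k\}$, so that $I \in R(U_0)$ but $I \notin R(U_1)$; since $K_1$ belongs to every subsequent $U_s$ and $k \in (K_1)_>$, Definition~\ref{DM_def_non_vis} keeps $I$ non-visible for all $s \geq 1$, hence $I \notin R(U_r)$. You instead construct a global potential $W(U) = \sum_{I \in R(U)} \sigma(I)$ and show it is constant across non-visible edges and strictly monotone across visible ones. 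Your key input --- that a visible edge swaps $\{K\setminus\{k\} : k \in K_>\} \subset R(U)$ for $\{K\setminus\{k\} : k \in K_<\} \subset R(U')$ while leaving everything outside $P(K)$ untouched --- is exactly what the proof of Proposition~\ref{DM_prop:R(U)=R(U')} establishes (the part outside $P(K)$ is immediate, since the visibility of $I'$ depends only on the $(n+1)$-sets containing $I'$, and $K$ is not one of them when $I' \notin P(K)$). Your two parity computations check out: for $n$ odd, $K_<$ and $K_>$ interleave with equal cardinality and the difference telescopes to $\sum_j (k_{2j}-k_{2j-1})>0$; for $n$ even, $|K_<|-|K_>|=-1$ and $K_< \sqcup K_> = K$ collapse the expression to $-2\sum_{k\in K_<}k<0$. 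The paper's argument is shorter and needs no case analysis; yours buys a bit more, namely a strictly order-preserving map $T(N,n) \to \mathbb{Z}$ that certifies acyclicity of the quotient uniformly over all visible edges rather than through one chosen element.
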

\begin{proof} 
Without restriction of generality we can assume that $U_0 \stackrel{K_1}{\rightarrow} U_1$ 
is visible and $K_1 = I \cup \{k\}$ with $k \in (K_1)_>$, so that $I \in R(U_0)$ 
and $I \notin R(U_1)$, see the proof of Proposition~\ref{DM_prop:R(U)=R(U')}.
Since $K_1 \in U_s$ for $s=1,\ldots,r$, and $k \in (K_1)_>$, $I$ does not appear in 
any $R(U_s)$, and in particular not in $R(U_r)$. 
\end{proof}

\begin{definition} 
The \emph{higher Tamari order}\index{order!higher Tamari} $T(N,n)$ is the poset with set of vertices $\{R(U) \, | \, U \in B(N,n) \}$ 
and the order given by $R(U) \leq R(U')$ if $U \leq U'$ in $B(N,n)$.
\end{definition}

\begin{remark}
The map $B(N,n) \to T(N,n)$, given by $U \mapsto R(U)$, is surjective and order preserving. 
The Tamari orders\index{order!higher Tamari} also inherit the following property from the 
Bruhat orders\index{order!higher Bruhat}. 
There is a bijection between the maximal chains of $T(N,n)$ and the linear extensions 
of the $R$-posets that form the vertices of $T(N,n+1)$. 
\end{remark}

In order to construct the Tamari order\index{order!higher Tamari} $T(N,n)$, 
in each of the maximal chains 
$\bullet \stackrel{K_1}{\longrightarrow} \bullet \stackrel{K_2}{\longrightarrow} \bullet
 \; \cdots \; \bullet \stackrel{K_q}{\longrightarrow} \bullet$ of $B(N,n)$ we locate 
the edges associated with non-visible $K$'s and eliminate them. This results in a reduced 
chain 
$\bullet \stackrel{K_{i_1}}{\longrightarrow} \bullet \stackrel{K_{i_2}}{\longrightarrow} \bullet
     \; \cdots \; \bullet \stackrel{K_{i_w}}{\longrightarrow} \bullet$.\footnote{The information 
that resides in the edge labels of the Tamari (i.e. reduced Bruhat) chains is not sufficient 
to construct the $R$-posets, which are the vertices of $T(N,n)$. The latter have to be constructed 
from the $Q$-posets via elimination of non-visible elements.  
}
Such an elimination involves identifying the two vertices that are connected by this edge 
in the Bruhat order\index{order!higher Bruhat}. The consistency of this identification is 
guaranteed by Proposition~\ref{DM_prop:R(U)=R(U')}. As a consequence of 
Corollary~\ref{DM_cor:no_cycles}, the elimination process cannot lead to cycles, so indeed 
defines a partial order. Figure~\ref{DM_fig:B31,T31} shows a simple example.
\begin{SCfigure}[1.6][hbtp]
\hspace{.5cm}
\includegraphics[scale=.7]{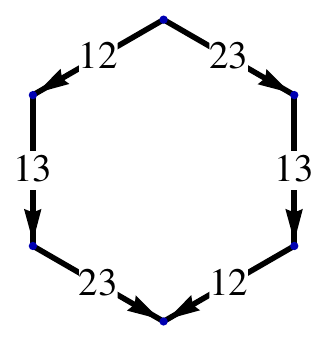} 
\hspace{1.cm}
\includegraphics[scale=.56]{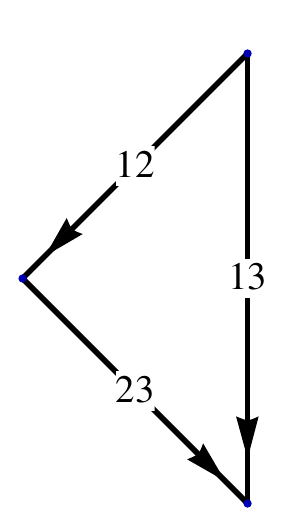}
\caption{$B(3,1)$ (weak Bruhat order\index{order!weak} on $S_3$), and the corresponding Tamari 
order\index{order!higher Tamari} $T(3,1)$. The latter is obtained from the former by eliminating in the left maximal 
chain the non-visible $13$ and in the right maximal chain the non-visible $12$ and $23$. }
\label{DM_fig:B31,T31} 
\end{SCfigure} 

Since $T(N,N-1) \cong B(N,N-1)$, both are represented by 
$\bullet \stackrel{[N]}{\longrightarrow} \bullet$.
 For $N=3$ and $N=4$, the vertices are represented in Figure~\ref{DM_fig:Miles_xy_B32_T32},
respectively Figure~\ref{DM_fig:B431,T43}, in terms of 
the $Q$-posets, respectively $R$-posets, and the information contained in the latter is translated 
into a soliton graph.
\begin{figure}[H] 
\begin{center}
\begin{minipage}[t]{3.5cm}
\vspace{-2.8cm}
\includegraphics[scale=.7]{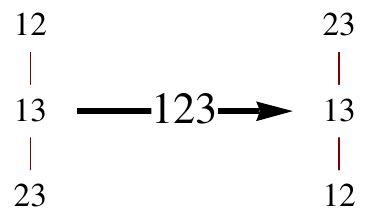}
\vskip.2cm
\includegraphics[scale=.7]{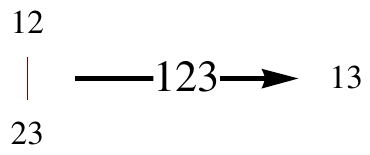} 
\end{minipage}
\hspace{1cm}
\includegraphics[scale=.34]{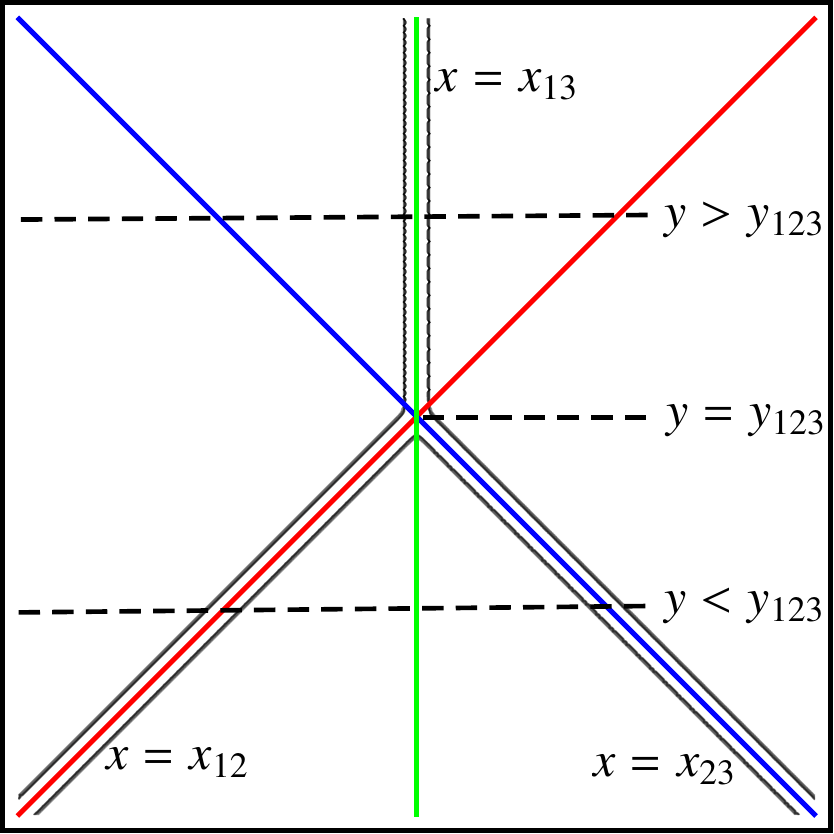} 
\parbox{15cm}{
\caption{To the left is the poset $B(3,2)$ and, below it, its visible part $T(3,2)$. Here the vertices 
are labelled by $Q(\emptyset)$ and $Q(\{[3]\})$, respectively $R(\emptyset)$ and $R(\{[3]\})$. 
The latter data translate into the soliton solution with $M=2$, as indicated 
in the plot on the right hand side (also see \cite{DM_DMH11KPT}). At a thin line, two phases coincide. 
Only the thickened parts are visible. 
We note that $Q(\emptyset)$ and $Q(\{[3]\})$ are linear orders in this particular example, 
hence elements of $A(3,2)$ and, by a general result, maximal chains of $B(3,1)$, see Figure~\ref{DM_fig:B31,T31}. \label{DM_fig:Miles_xy_B32_T32} }
}
\end{center} 
\end{figure} 

\begin{figure}[H] 
\begin{center}
\begin{minipage}[t]{3.8cm}
\begin{minipage}{3.8cm}
\includegraphics[scale=.6]{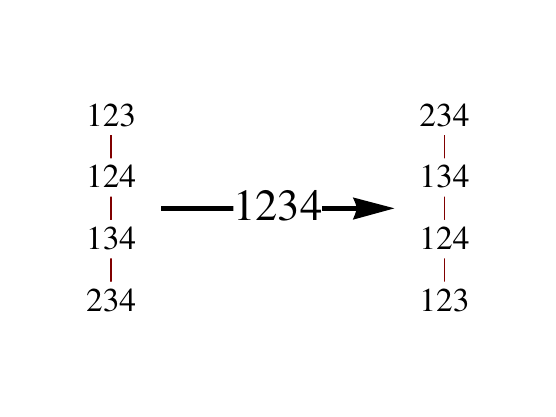} 
\end{minipage}
\begin{minipage}{3.7cm}
\vspace{-1.cm}
\hspace*{.01cm}
\includegraphics[scale=.6]{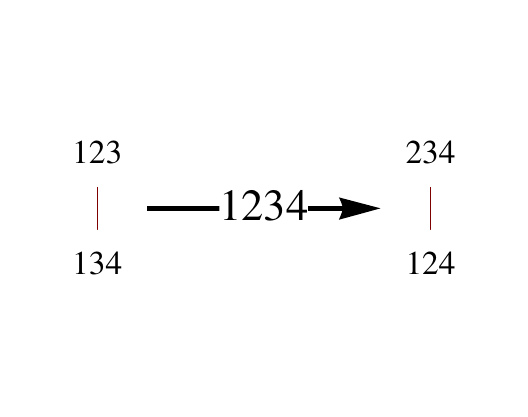}
\end{minipage}
\end{minipage}
\begin{minipage}[t]{8cm}
\vspace{-.7cm}
\includegraphics[scale=.32]{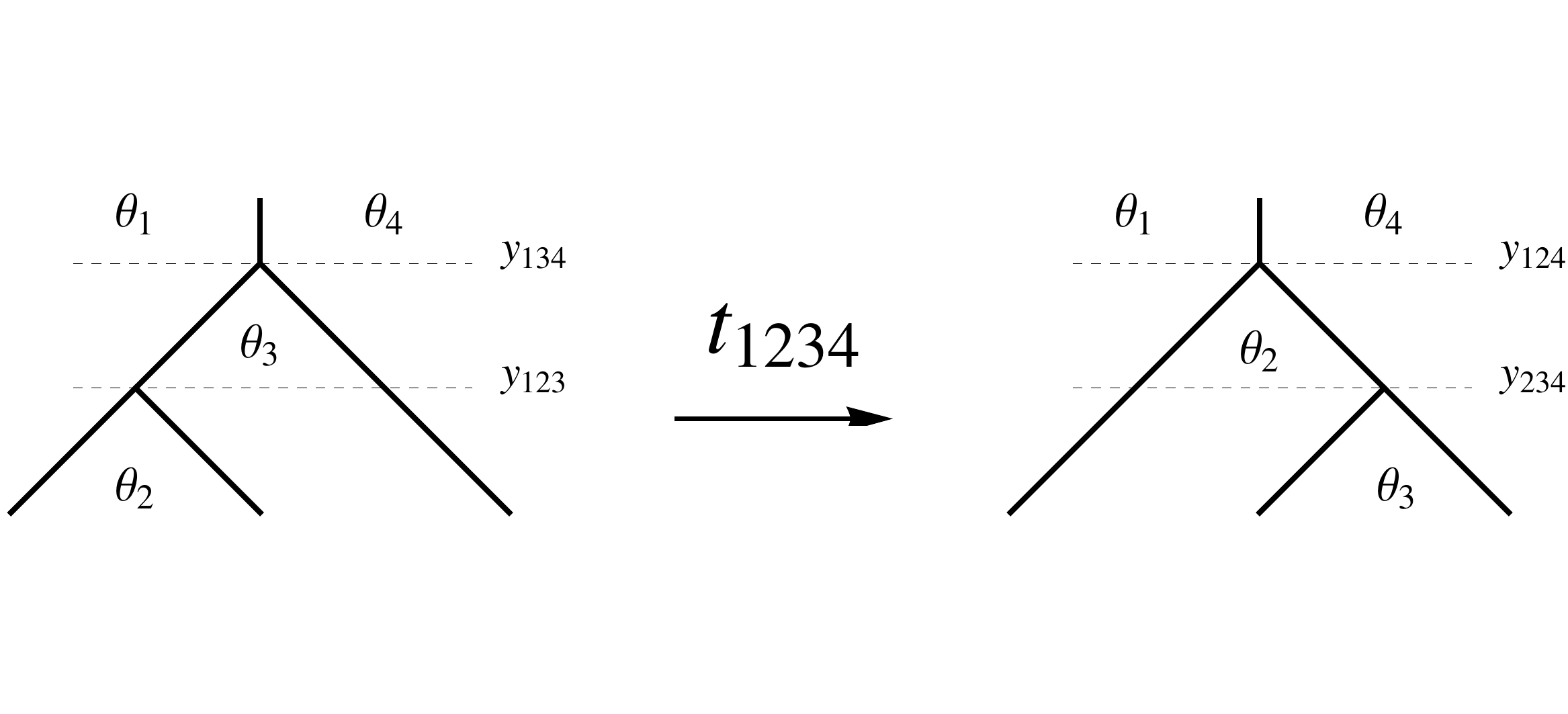}
\end{minipage}
\parbox{15cm}{
\caption{$B(4,3)$ with vertices the posets $Q(\emptyset)$ and $Q(\{[4]\})$. 
The diagram below it is $T(4,3)$, with vertices $R(\emptyset)$ and $R(\{[4]\})$. 
To the right is the translation of $T(4,3)$ into a chain of two rooted binary trees
related by a tree rotation. This chain is the Tamari lattice\index{Tamari!lattice} 
$\mathbb{T}_2$ (in terms of rooted binary trees). \label{DM_fig:B431,T43} }
}
\end{center}
\end{figure} 

The Bruhat order\index{order!higher Bruhat} $B(N,N-2)$ consists of two maximal chains, 
\bez
  \bullet \stackrel{[N-1]}{\longrightarrow} \bullet 
            \stackrel{[N] \setminus \{N-1\}}{\longrightarrow} \bullet
     \; \cdots \; \bullet \stackrel{[N] \setminus \{1\}}{\longrightarrow} \bullet \, , \qquad
   \bullet \stackrel{[N]\setminus \{1\}}{\longrightarrow} \bullet 
            \stackrel{[N] \setminus \{2\}}{\longrightarrow} \bullet
     \; \cdots \; \bullet \stackrel{[N-1]}{\longrightarrow} \bullet \, ,
\eez
in which $P([N])$ appears in lexicographic, respectively reverse lexicographic 
 order\index{order!lexicographic}.
In the first chain we have to eliminate the edges corresponding to all sets $[N] \setminus \{k\}$ 
with $k \in [N]_< = \{N-1,N-3, \ldots , 1 + (N \, \mathrm{mod} \, 2)\}$, in the second chain 
those corresponding to such sets with  
$k \in [N]_> = \{N,N-2, \ldots , 2 - (N \, \mathrm{mod} \, 2)\}$. 
This results in the two reduced chains
\bez
  && \bullet \stackrel{[N-1]}{\longrightarrow} \bullet 
            \stackrel{[N] \setminus \{N-2\}}{\longrightarrow} \bullet
     \; \cdots \; \bullet \stackrel{[N] \setminus \{2 - (N \, \mathrm{mod} \, 2)\}}{\longrightarrow}
     \bullet  \\
  && \bullet \stackrel{[N]\setminus \{1+(N \, \mathrm{mod} \, 2)\}}{\longrightarrow} \bullet 
       \; \cdots \; \bullet
            \stackrel{[N] \setminus \{N-3\}}{\longrightarrow} \bullet
      \stackrel{[N] \setminus \{N-1\}}{\longrightarrow} \bullet \, ,
\eez
which form $T(N,N-2)$. See Figure~\ref{DM_fig:B42,T42} for the case $N=4$. 
\begin{SCfigure}[1.2][hbtp]
\includegraphics[scale=.6]{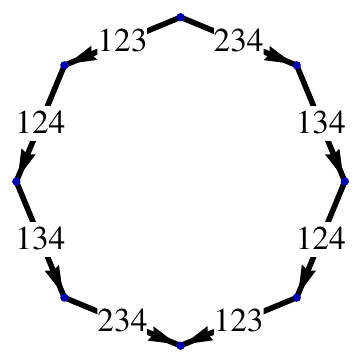} 
\hspace{1.cm}
\includegraphics[scale=.65]{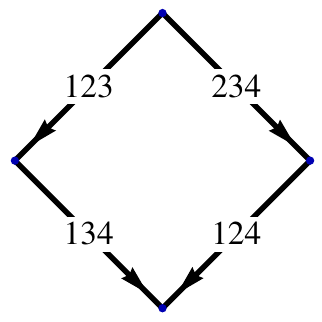} 
\caption{$B(4,2)$ and $T(4,2)$.  } 
\label{DM_fig:B42,T42}
\end{SCfigure} 
The two maximal chains of $B(4,2)$ with the $Q$-posets as vertices have already been 
displayed in Figure~\ref{DM_fig:B42Qs}. Elimination of non-visible elements yields 
the chains of $T(4,2)$ in Figure~\ref{DM_fig:T42chains}.

\begin{figure}
\begin{center}
\includegraphics[scale=.9]{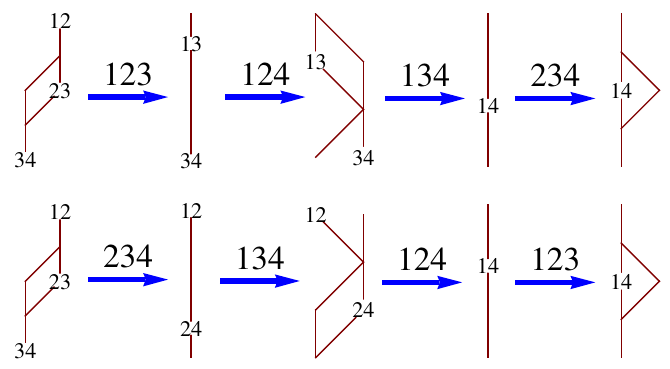} 
\hspace{1cm}
\includegraphics[scale=.9]{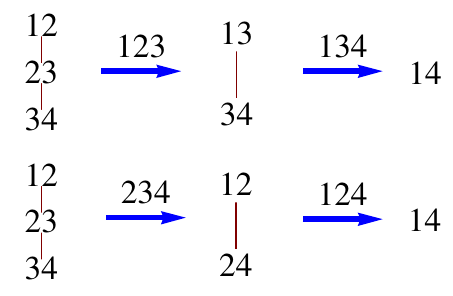} 
\parbox{15cm}{
\caption{The two maximal chains of $T(4,2)$ are displayed on the right hand side.
On the left hand side, in order to illustrate Proposition~\ref{DM_prop:R(U)=R(U')}, 
we show an intermediate elimination step applied to the $B(4,2)$-chains of $Q$-posets 
in Figure~\ref{DM_fig:B42Qs}. Here we still kept the edges $124$ and $234$ 
in the upper chain, and $134$, $123$ in the lower chain, which are non-visible in the 
Tamari order\index{order!higher Tamari}. We observe that they indeed connect identical posets. 
\label{DM_fig:T42chains} }
}
\end{center}
  
\end{figure} 

\begin{figure}
\begin{center}
\includegraphics[scale=.4]{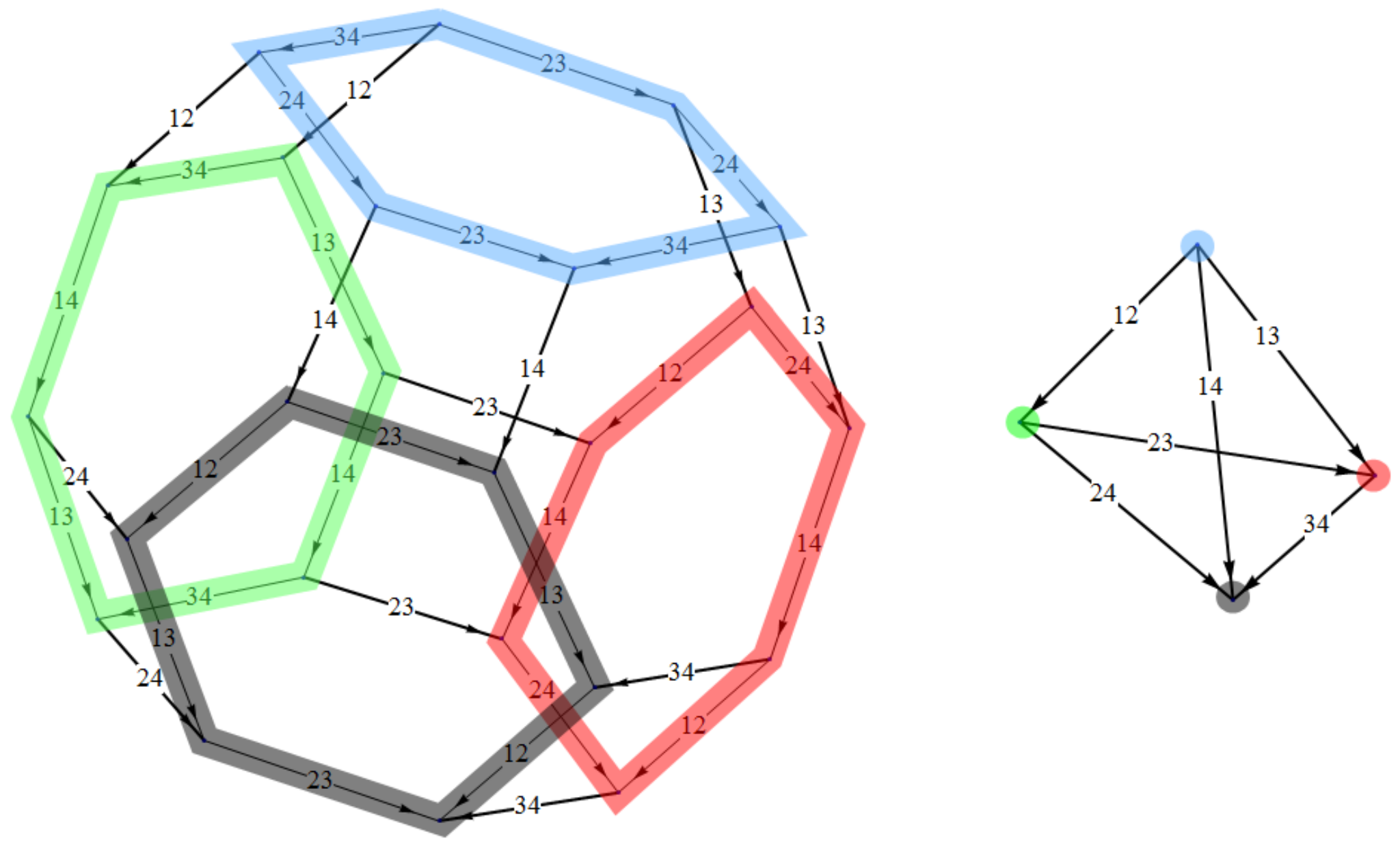}
\parbox{15cm}{
\caption{The left figure shows $B(4,1)$. Non-visible edges connecting vertices that 
are mapped to the same vertex of $T(4,1)$ (tetrahedral poset) 
are marked with the same color. \label{DM_fig:B41,T41} }
}
\end{center}  
\end{figure}

By determining the linear extensions $K_1 \rightarrow K_2  \rightarrow \cdots$ 
of the posets in Figure~\ref{DM_fig:B42Qs}, we obtain maximal chains 
$\bullet \stackrel{K_1}{\longrightarrow} \bullet \stackrel{K_2}{\longrightarrow} \cdots$ 
of $B(4,1)$. In this way we construct $B(4,1)$ and then obtain $T(4,1)$ from it by elimination 
of non-visible edges, see Figure~\ref{DM_fig:B41,T41}. 
Figure~\ref{DM_fig:B52,T52} shows the corresponding construction of $T(5,2)$ from $B(5,2)$. 

\begin{figure}
\begin{center}
\includegraphics[scale=.42]{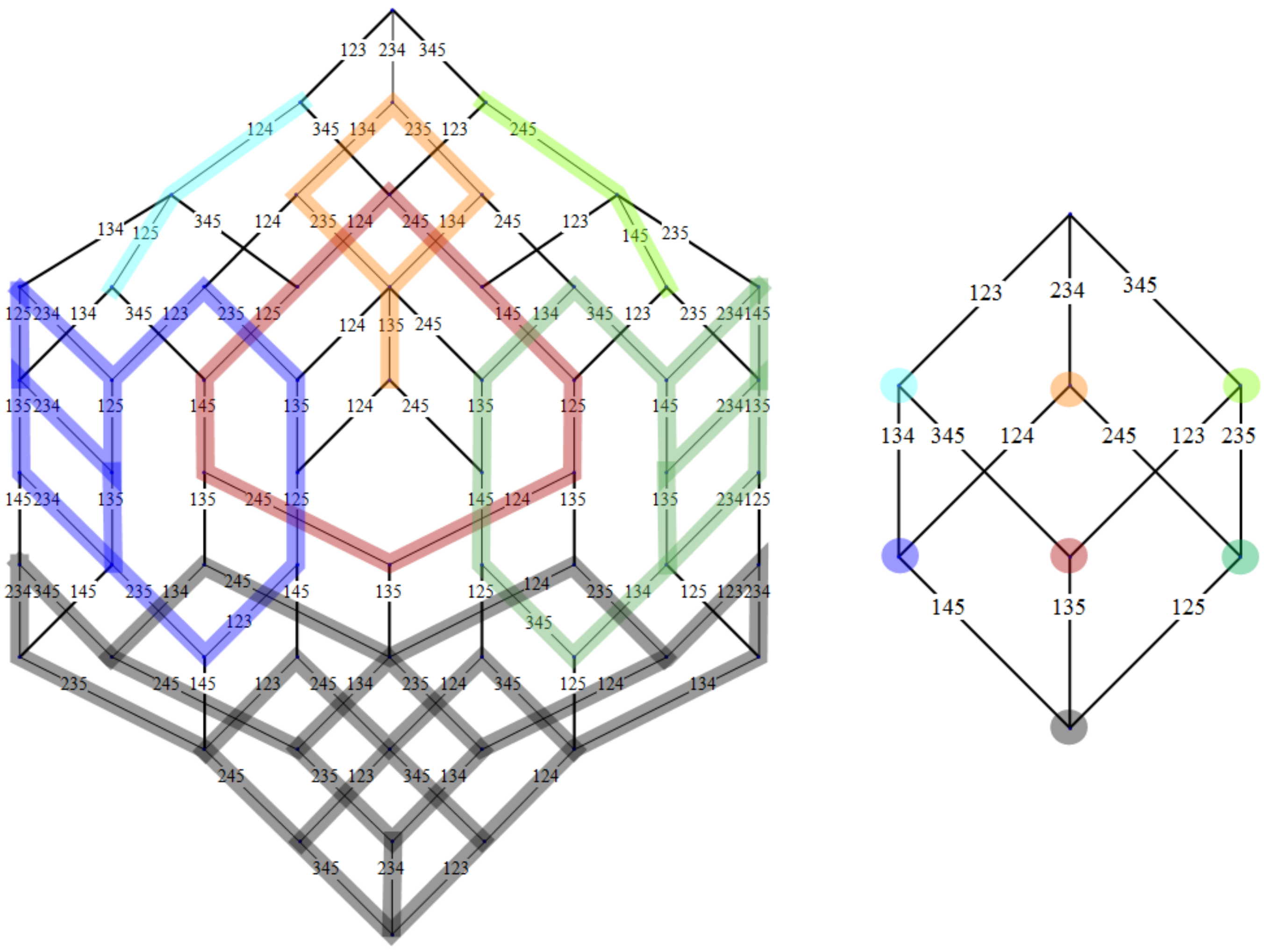} 
\parbox{15cm}{
\caption{$B(5,2)$ and $T(5,2)$. Vertices of $B(5,2)$ connected by edges marked with the same color 
are mapped to the same vertex of $T(5,2)$. \label{DM_fig:B52,T52} }
}
\end{center}
\end{figure}

\subsubsection{An $(n+1)$-gonal equivalence relation}
\label{DM_ssec:polygonal_rel}
The vertices of $B(N,n)$ can be described by the $Q$-posets, and on this level 
we defined the transition to $T(N,n)$. 
But the vertices of $B(N,n)$ are equivalently given by consistent sets (which was 
in fact our original definition). Along a maximal chain, moving from a vertex to 
the next means increasing the consistent set associated with the first vertex 
by inclusion of a new set which we use to label the edge between the two 
vertices. 
The transition from a Bruhat\index{order!higher Bruhat} to a Tamari order\index{order!higher Tamari} 
means elimination of some of these sets. 
Whereas every maximal chain of $B(N,n)$ ends in the same set, this is not 
so for different Tamari chains because of different eliminations along different  
chains, see Figure~\ref{DM_fig:polygon_rule}.
\begin{SCfigure}[1.6][hbtp]
\hspace{1.cm}
\includegraphics[scale=.5]{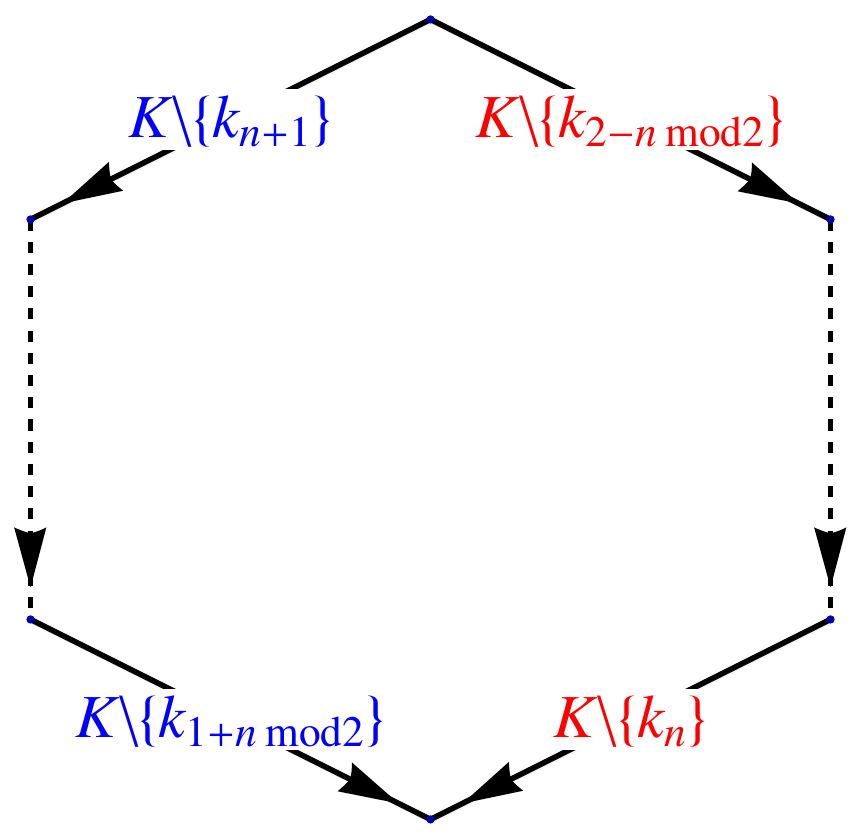} 
\caption{For $K \in {[N] \choose n+1}$, the two chains of the diagram start in 
the same vertex and end in the same vertex, which includes the union of all sets 
associated with the edges of the left chain, but 
also the union of all sets associated with the edges of the right chain. 
The two resulting sets have to be identified, which leads to an $(n+1)$-gonal 
equivalence relation. If $K = \{[N]\}$, i.e. $n=N-1$, the chains are the 
two maximal chains of $T(N,N-2)$. }
\label{DM_fig:polygon_rule}  
\end{SCfigure} 
If $U$ is the set that corresponds to the first vertex from which the two chains in 
 Figure~\ref{DM_fig:polygon_rule} descend, the first ends in 
$U \cup (K \setminus \{k_{n+1}\}) \cup \cdots \cup (K \setminus \{k_{1+n \, \mathrm{mod} \, 2} \})$,
the second in  
$U \cup (K \setminus \{k_{2-n \, \mathrm{mod} \, 2} \}) \cup \cdots \cup (K \setminus \{k_n\})$. 
The two resulting sets have to be identified, since the final vertex is the same. 
This requires the \emph{$(n+1)$-gonal equivalence relation}
\bez
   U \cup \{ K \setminus \{k_{n+1}\}, \ldots, K \setminus \{k_{1+n \, \mathrm{mod} \, 2} \} \} 
  \sim U \cup \{ K \setminus \{k_n\}, \ldots, K \setminus \{k_{2-n \, \mathrm{mod} \, 2} \} \} \, ,
\eez 
for a Tamari order\index{order!higher Tamari}, and motivates the algebraic structure considered in 
Section~\ref{DM_sec:polygonal_rels}.

\section{KP line soliton evolutions in the case $M=5$}
For $M=5$ (i.e., $N=6$) the $\tau$-function of a tree-shaped line soliton solution is given by 
\bez
  \tau = e^{\theta_1} + \cdots + e^{\theta_6} \, , \qquad
  \theta_i = p_i \, x + p_i^2 \, y + p_i^3 \, t + p_i^4 \, t^{(4)} + p_i^5 \, t^{(5)} + c_i 
  \, ,
\eez
with real constants $p_1< \cdots <p_6$ and $c_i$. 
We have $\Omega = [6] = \{1,2,3,4,5,6\}$, and $\bullet \stackrel{[6]}{\longrightarrow} \bullet$ 
represents $B(6,5)$ and also $T(6,5)$. The evolution of the soliton corresponds to 
the left vertex if $t^{(5)} < t^{(5)}_{123456}$, and to the right vertex if 
$t^{(5)} > t^{(5)}_{123456}$. Associated with the two vertices are maximal chains of 
$B(6,4)$, from which $T(6,4)$ is obtained by elimination of non-visible events, 
see Figure~\ref{DM_fig:B64,T64}. 
%\vspace{-.8cm}
\begin{SCfigure}[1.][hbtp] 
\hspace{.5cm}
\includegraphics[scale=.38]{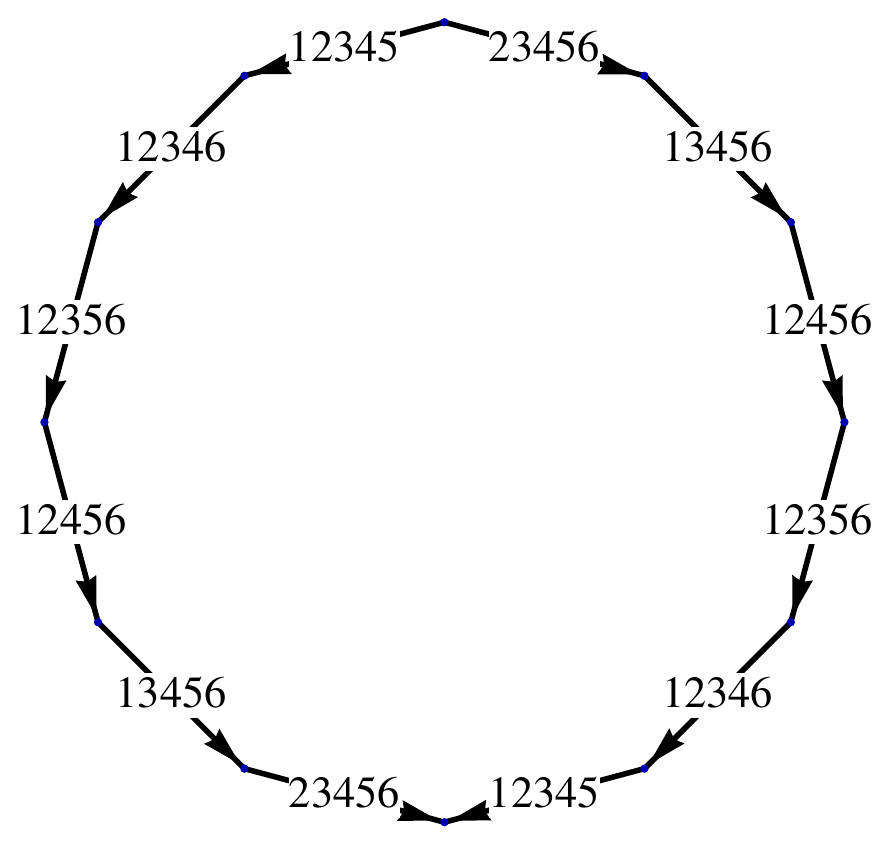} 
\hspace{.1cm}
\includegraphics[scale=.76]{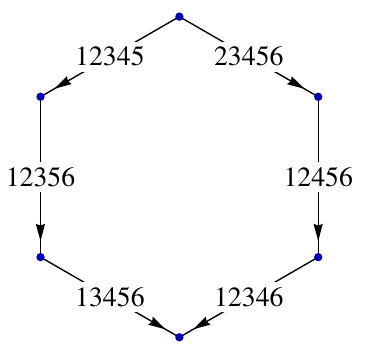} 
\caption{$B(6,4)$ and $T(6,4)$. Here e.g. $12356$ (which stands for $\{1,2,3,5,6\}$) 
translates to the value $t^{(4)}_{12356}$ of the parameter $t^{(4)}$. }
\label{DM_fig:B64,T64}  
\end{SCfigure} 
If $t^{(5)} < t^{(5)}_{123456}$, the left chain of $B(6,4)$ applies, which means
\bez
    t^{(4)}_{12345} < {\color{green}t^{(4)}_{12346}} < t^{(4)}_{12356} 
    < {\color{green}t^{(4)}_{12456}} < t^{(4)}_{13456} < {\color{green}t^{(4)}_{23456}} \, , 
\eez
where the second, fourth and sixth value corresponds to a non-visible event.  
If $t^{(5)} > t^{(5)}_{123456}$, the right chain of $B(6,4)$ applies, hence
\bez
    t^{(4)}_{23456} < {\color{green}t^{(4)}_{13456}} < t^{(4)}_{12456} 
    < {\color{green}t^{(4)}_{12356}} < t^{(4)}_{12346} < {\color{green}t^{(4)}_{12345}} \, ,
\eez
where the second, fourth and sixth value is non-visible. 
If $t^{(5)} = t^{(5)}_{123456}$, all the values $t^{(4)}_{ijklm}$ coincide. 

\begin{figure}
\begin{center}
\includegraphics[scale=1.2]{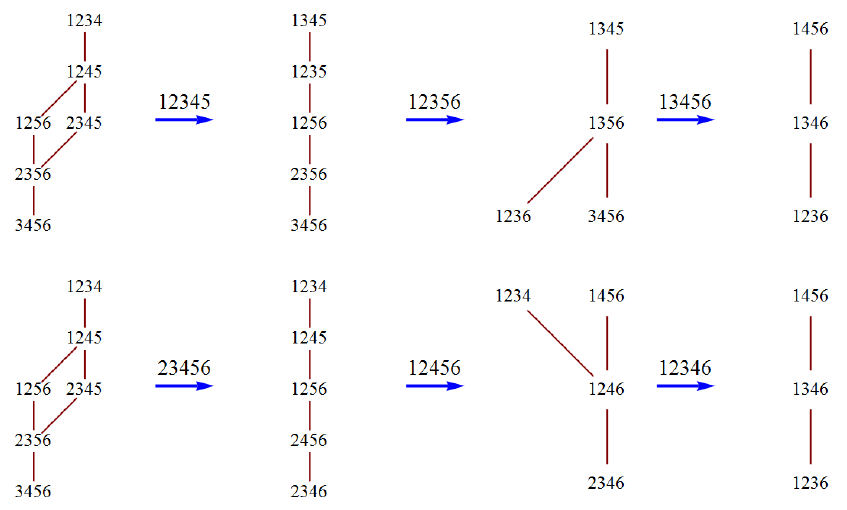} 
\parbox{15cm}{
\caption{The two maximal chains of $T(6,4)$. The vertices are the $R$-posets obtained 
from the $Q$-posets associated with the vertices of $B(6,4)$. 
The upper chain applies if $t^{(5)} < t^{(5)}_{123456}$, the lower if 
$t^{(5)} > t^{(5)}_{123456}$. \label{DM_fig:T64chains} } 
}
\end{center} 
\end{figure} 

The vertices of $T(6,4)$ are the $R$-posets in Figure~\ref{DM_fig:T64chains}. 
The linear extensions of a poset determine the possible 
orders of critical values 
of time $t = t^{(3)}$. For example, if $t^{(5)} > t^{(5)}_{123456}$, the lower 
horizontal chain in Figure~\ref{DM_fig:T64chains} applies. If furthermore 
$t^{(4)}_{12456} < t^{(4)} < t^{(4)}_{12346}$ (third poset), then we have either 
$t_{1234} < t_{1456} <  t_{1246} < t_{2346}$ or $t_{1456} < t_{1234} <  t_{1246} < t_{2346}$ 
(since the poset has two linear extensions). In order to decide which of 
these orders is realized by the soliton, further conditions on the parameters $p_i$ 
(not the $c_i$) are required (see \cite{DM_DMH11KPT}). 
 From the linear extensions of the $R$-posets, we obtain (the maximal chains of) $T(6,3)$, 
which is the Tamari lattice\index{Tamari!lattice} $\mathbb{T}_4$. Figure~\ref{DM_fig:T4polytope} 
displays it as a (Tamari-Stasheff) polytope\index{associahedron}.
In order to identify the vertices of $T(6,3)$, we take the pentagonal equivalence 
\bez
 \{ \{i,j,k,l\}, \{i,j,l,m\}, \{j,k,l,m\} \} \sim \{ \{i,j,k,m\}, \{i,k,l,m\} \}
\eez
(where $i<j<k<l<m$) into account (see Section~\ref{DM_ssec:polygonal_rel}). 
The binary trees labelling its vertices in Figure~\ref{DM_fig:T4} are 
obtained from the $R$-posets, listed in Figure~\ref{DM_fig:T4vertices_posets}.

\begin{SCfigure}[1.2][hbtp]
\hspace{1.cm}
\includegraphics[scale=.4]{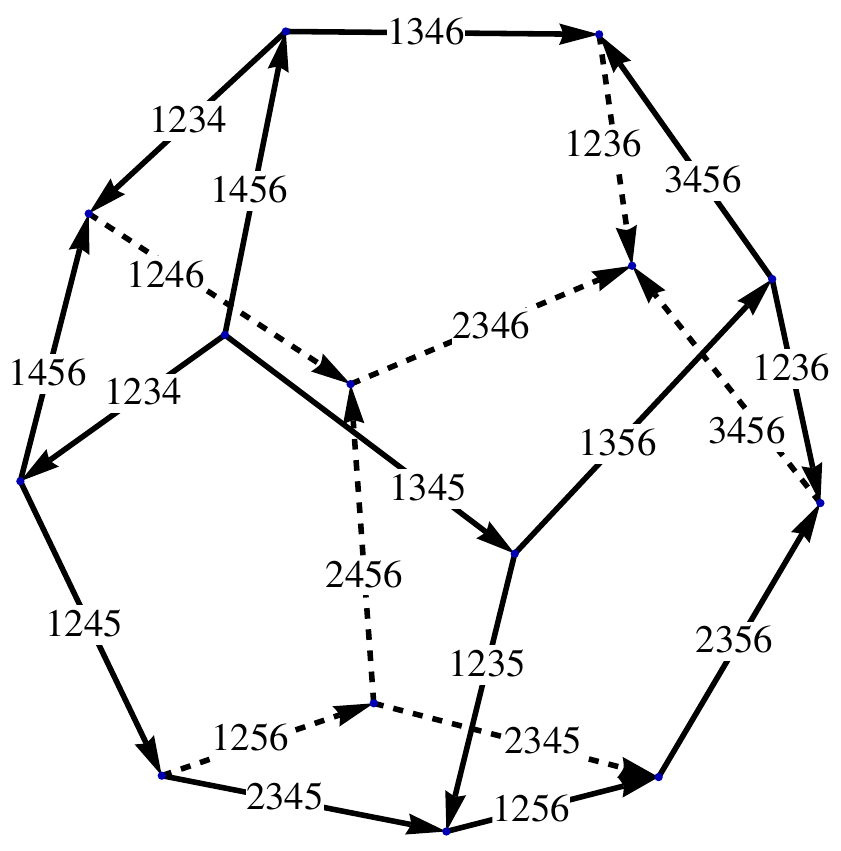}
\caption{The Tamari lattice\index{Tamari!lattice} $\mathbb{T}_4 = T(6,3)$ as a Tamari-Stasheff polytope\index{associahedron}. 
Such a representation first appeared in Tamari's thesis\index{Tamari!thesis} in 1951 \cite{DM_Tamari1951thesis}. }
\label{DM_fig:T4polytope}  
\end{SCfigure} 

\begin{figure}
\begin{center}
\includegraphics[scale=.7]{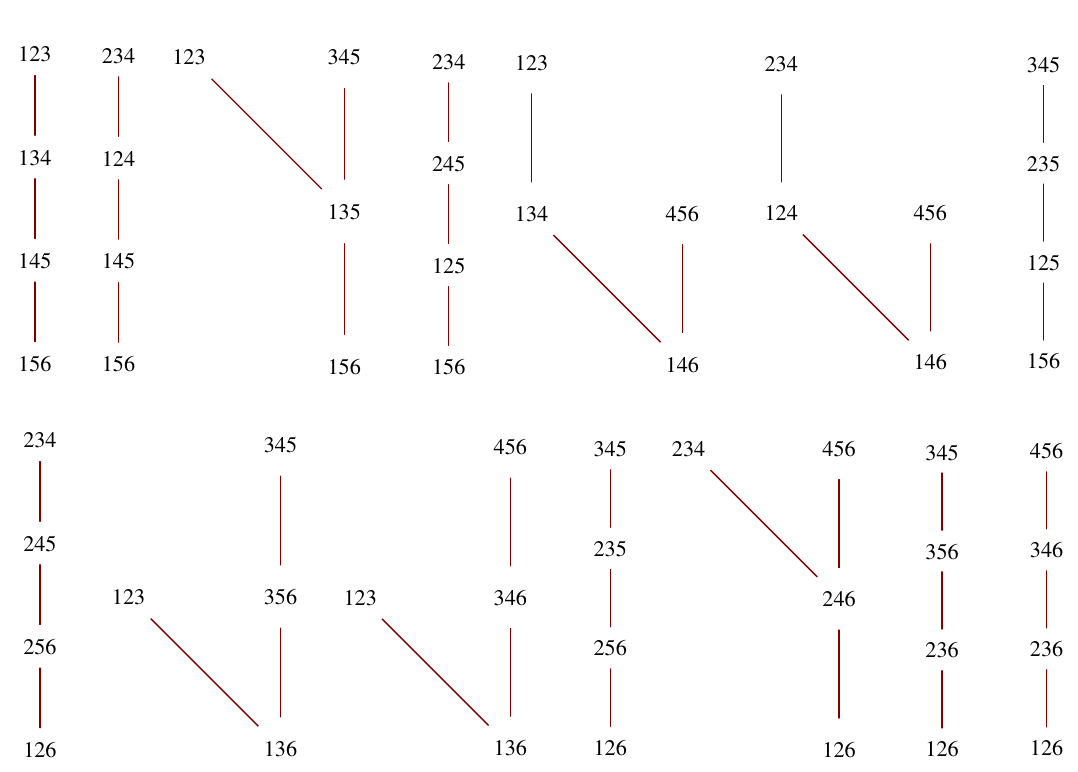} 
\parbox{15cm}{
\caption{The fourteen $R$-posets that label the vertices of $T(6,3)=\mathbb{T}_4$. 
They translate into the trees labelling the vertices of the Tamari 
lattice\index{Tamari!lattice} $\mathbb{T}_4$ 
in Figure~\ref{DM_fig:T4}. Each poset determines more directly a triangulation of a hexagon, 
the vertices of which are numbered (anticlockwise) by $1,2,\ldots,6$. 
A triple $ijk$ then specifies a triangle. \label{DM_fig:T4vertices_posets}  }
}
\end{center}  
\end{figure} 

Figure~\ref{DM_fig:T4evolution_example} shows an example of a line soliton evolution 
and the caption identifies the corresponding maximal chain of $\mathbb{T}_4$.
\begin{figure} 
\begin{center}
\includegraphics[scale=.57]{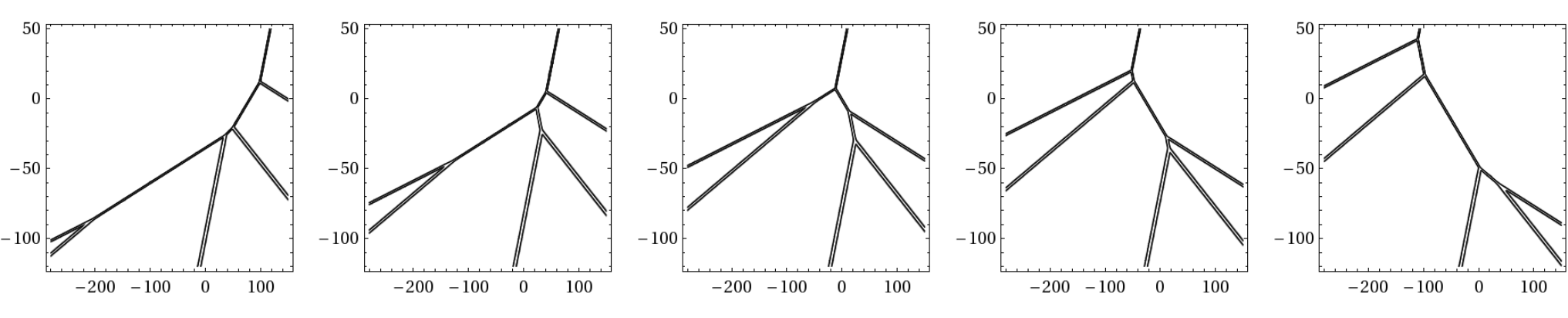} 
\parbox{15cm}{
\caption{Plots of an $M=5$ soliton in the $xy$-plane at successive values of 
time. Here we have chosen the parameters such that $t^{(5)}<t^{(5)}_{123456}$ and 
$t^{(4)}_{12356} < t^{(4)} < t^{(4)}_{13456}$. The evolution corresponds to 
$\bullet \stackrel{1345}{\longrightarrow} \bullet \stackrel{1356}{\longrightarrow} \bullet 
\stackrel{1236}{\longrightarrow} \bullet \stackrel{3456}{\longrightarrow} \bullet$ 
on the Tamari lattice\index{Tamari!lattice} $T(6,3)=\mathbb{T}_4$ in Figure~\ref{DM_fig:T4}. 
It is a linear extension of the third poset of the first horizontal chain 
in Figure~\ref{DM_fig:T64chains}. \label{DM_fig:T4evolution_example}  }
}
\end{center} 
\end{figure} 

In the next steps, we obtain $T(6,2)$ and $T(6,1)$, see Figure~\ref{DM_fig:T(6,2),T(6,1)}.
\begin{figure}
\begin{center}
\includegraphics[scale=.55]{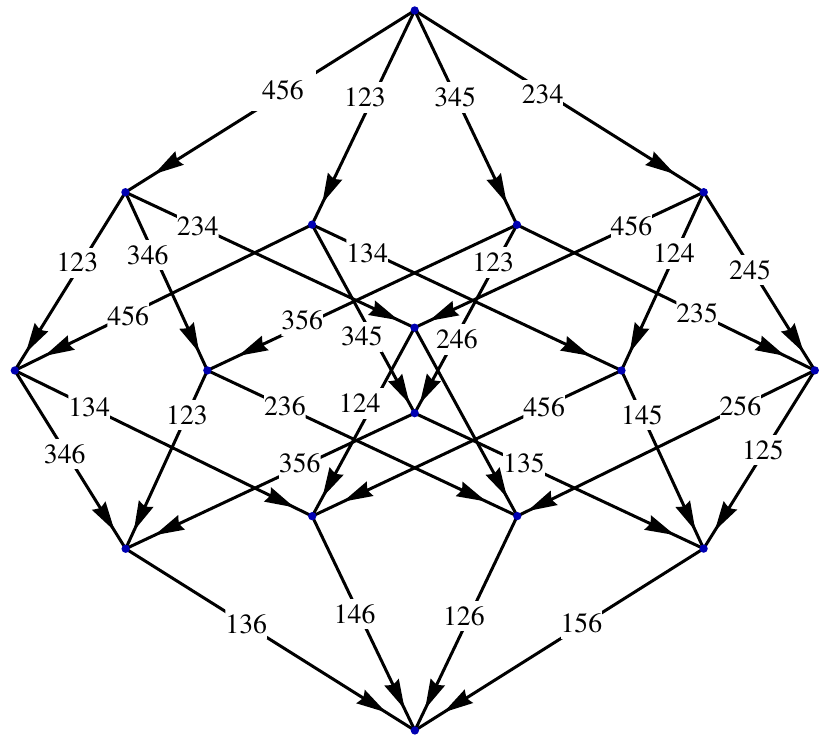} 
\hspace{1.cm}
\includegraphics[scale=.52]{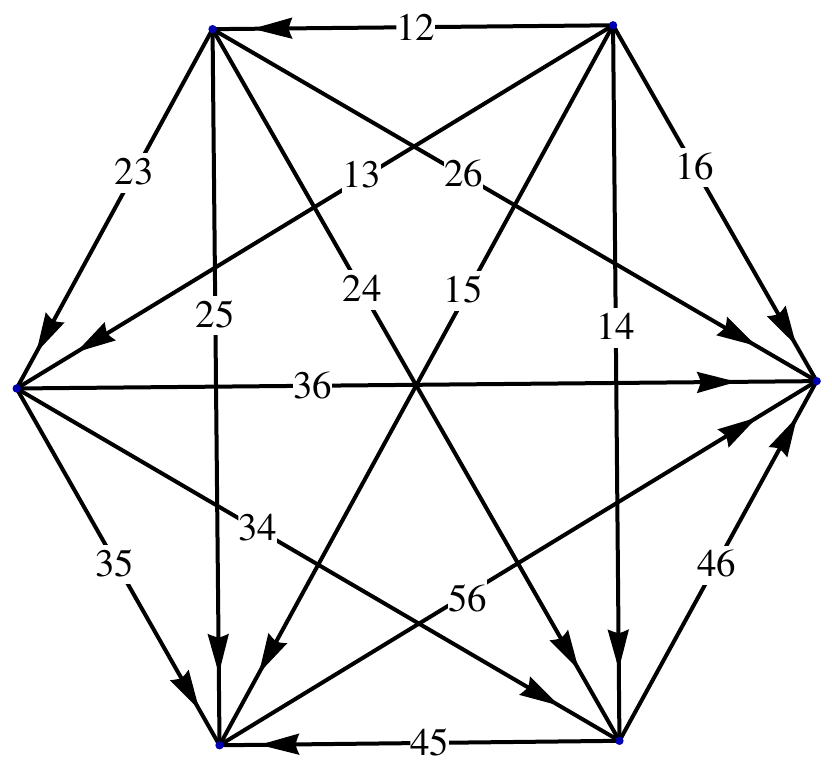} 
\parbox{15cm}{
\caption{The Tamari order\index{order!higher Tamari} $T(6,2)$, which forms a cube in four dimensions, and 
$T(6,1)$, which is a $5$-simplex. \label{DM_fig:T(6,2),T(6,1)}  }
}
\end{center}
\end{figure} 
Finally, $T(6,0)$ is given by the Hasse diagram\index{Hasse diagram} with two 
vertices, the upper connected 
with the lower by six edges (labelled by $1,2,\ldots,6$).

\section{Some insights into the case $M > 5$}
The subclass of tree-shaped line soliton solutions with $M=6$, hence $N=7$ and $\Omega = [N] = \{1,2,3,4,5,6,7\}$, is given by 
\bez
  \tau = e^{\theta_1} + \cdots + e^{\theta_7} \, , \qquad
  \theta_i = p_i \, x + p_i^2 \, y + p_i^3 \, t + p_i^4 \, t^{(4)} + p_i^5 \, t^{(5)} 
             + p_i^6 \, t^{(6)} + c_i \; .
\eez
$T(7,5)$ is the heptagon in Figure~\ref{DM_fig:T75}. 
The left chain is realized if $t^{(6)} < t^{(6)}_\Omega$, the right chain if 
$t^{(6)} > t^{(6)}_\Omega$. 
Each element (vertex) is an $R$-poset, they are displayed in Figure~\ref{DM_fig:T75chains}. 
\begin{SCfigure}[1.4][hbtp] 
\includegraphics[scale=.6]{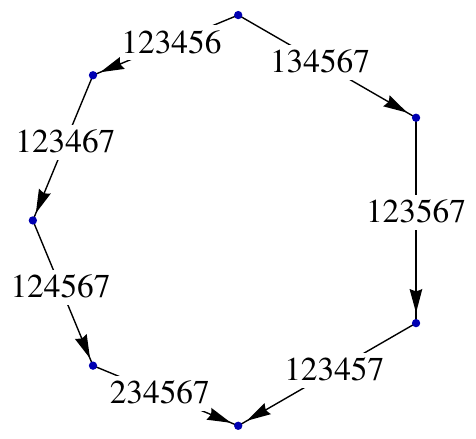} 
\hspace*{2.5cm}
\caption{The Tamari order\index{order!higher Tamari} $T(7,5)$. }
\label{DM_fig:T75} 
\end{SCfigure} 

\begin{figure}
\begin{center}
\includegraphics[scale=.9]{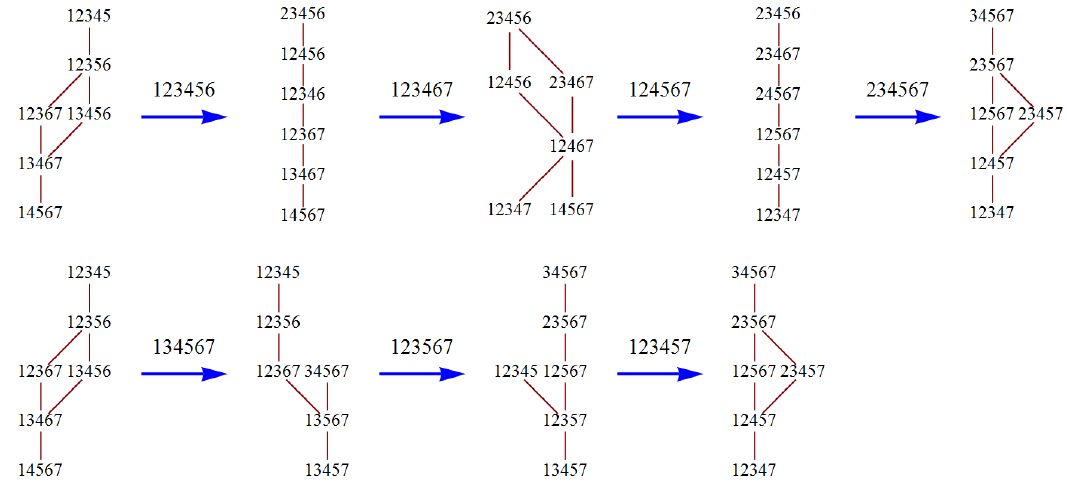} 
\parbox{15cm}{
\caption{The two maximal chains of $T(7,5)$, with vertices resolved into $R$-posets. 
 \label{DM_fig:T75chains} }
}
\end{center}  
\end{figure}  

 From these $R$-posets, we obtain in turn the maximal chains of $T(7,4)$, hence we can 
construct $T(7,4)$ by putting all these chains together (joining a minimal and a maximal 
element). In this way we recover a poset that first appeared in \cite{DM_Edel+Rein96} (Figure~4 
therein). Using {\sc Mathematica} \cite{DM_Mathematica}, we obtained a pseudo-realization 
as a polytope, see Figure~\ref{DM_fig:T74,T85polytopes}.

\begin{SCfigure}[1.][hbtp]
\hspace{.4cm}
\includegraphics[scale=.17]{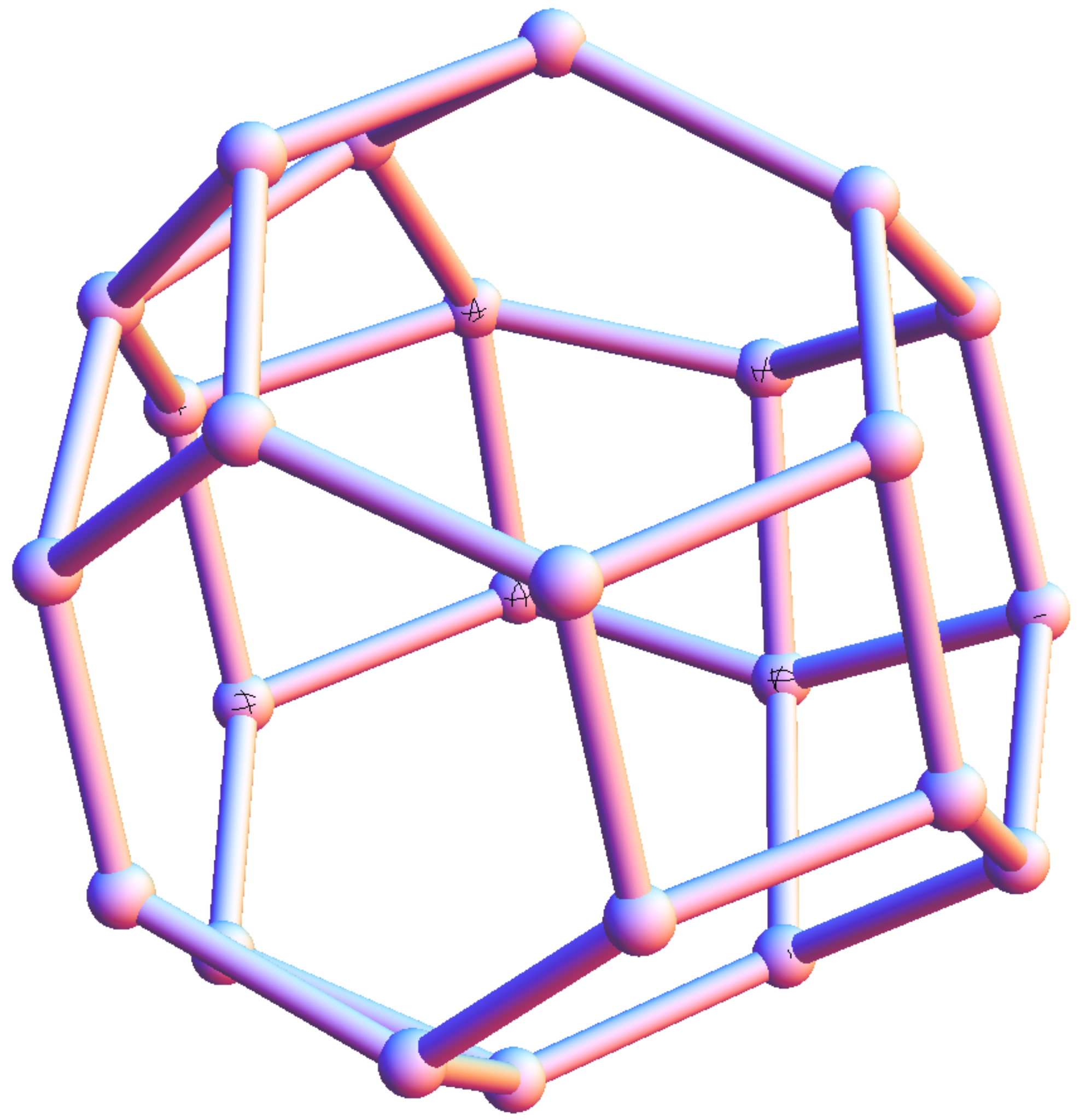} 
\hspace{.1cm}
\includegraphics[scale=.18]{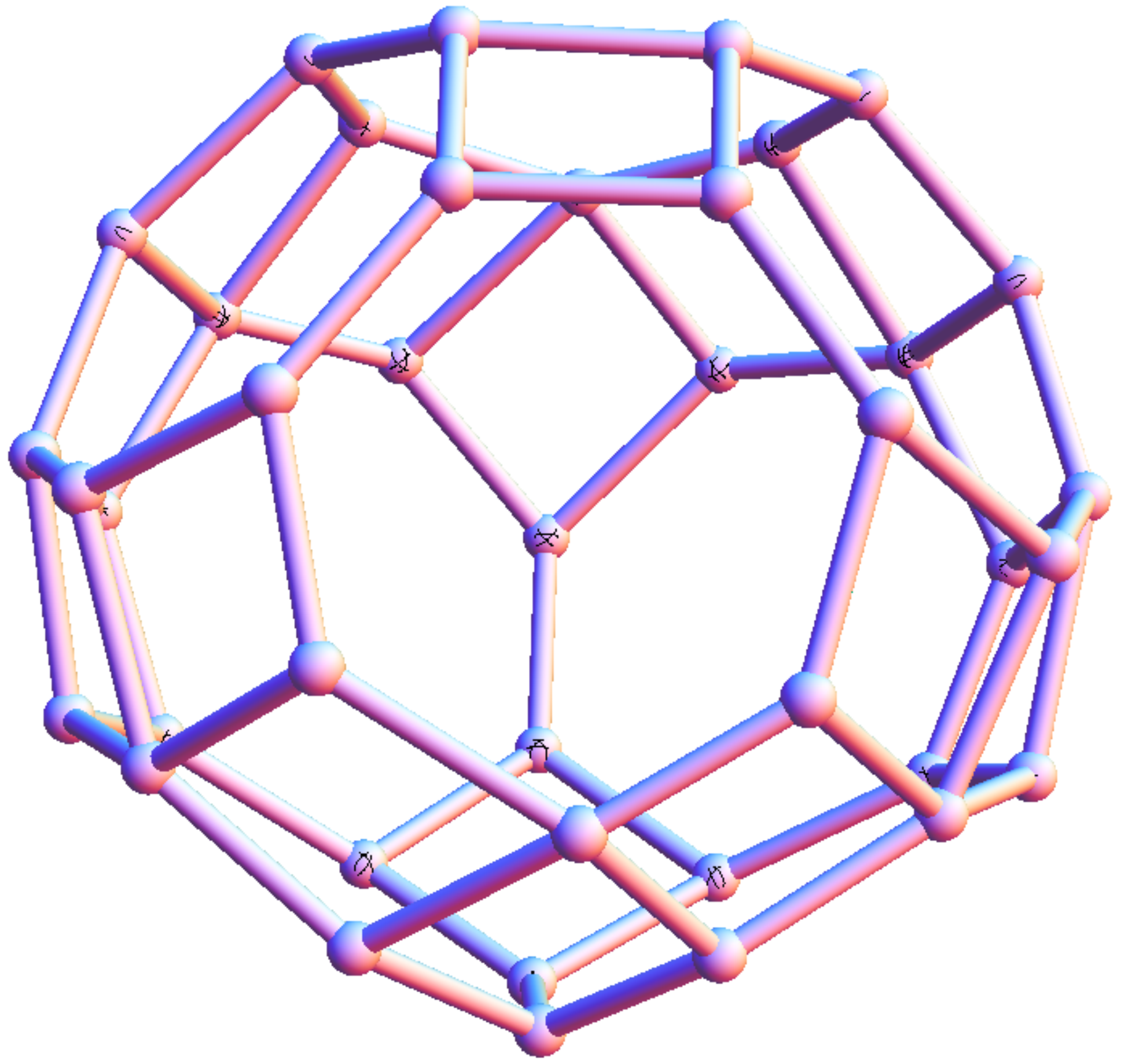} 
\caption{Polytopes on which the higher Tamari orders\index{order!higher Tamari} $T(7,4)$ and $T(8,5)$ live. 
It should be noticed, however, that not all faces are \emph{regular} or \emph{flat} 
quadrangles or hexagons, respectively heptagons 
(as also in Figure~\ref{DM_fig:T4polytope} with pentagons, cf. \cite{DM_Loday11}). }
\label{DM_fig:T74,T85polytopes}  
\end{SCfigure}  
In the next step, we obtain the Tamari lattice\index{Tamari!lattice} $\mathbb{T}_5 = T(7,3)$, 
which can be realized as the 4-dimensional associahedron\index{associahedron}. 
Its maximal chains classify the 
possible evolutions of a tree-shaped line soliton with seven phases. 

Figure~\ref{DM_fig:T74,T85polytopes} also shows that $T(8,5)$ is polytopal. 
Figure~\ref{DM_fig:T96,T107polytopes} displays polytope-like representations of  
some other higher Tamari orders\index{order!higher Tamari}. These are \emph{not} polytopes, however. 
\begin{figure}
\begin{center}
\includegraphics[scale=.5]{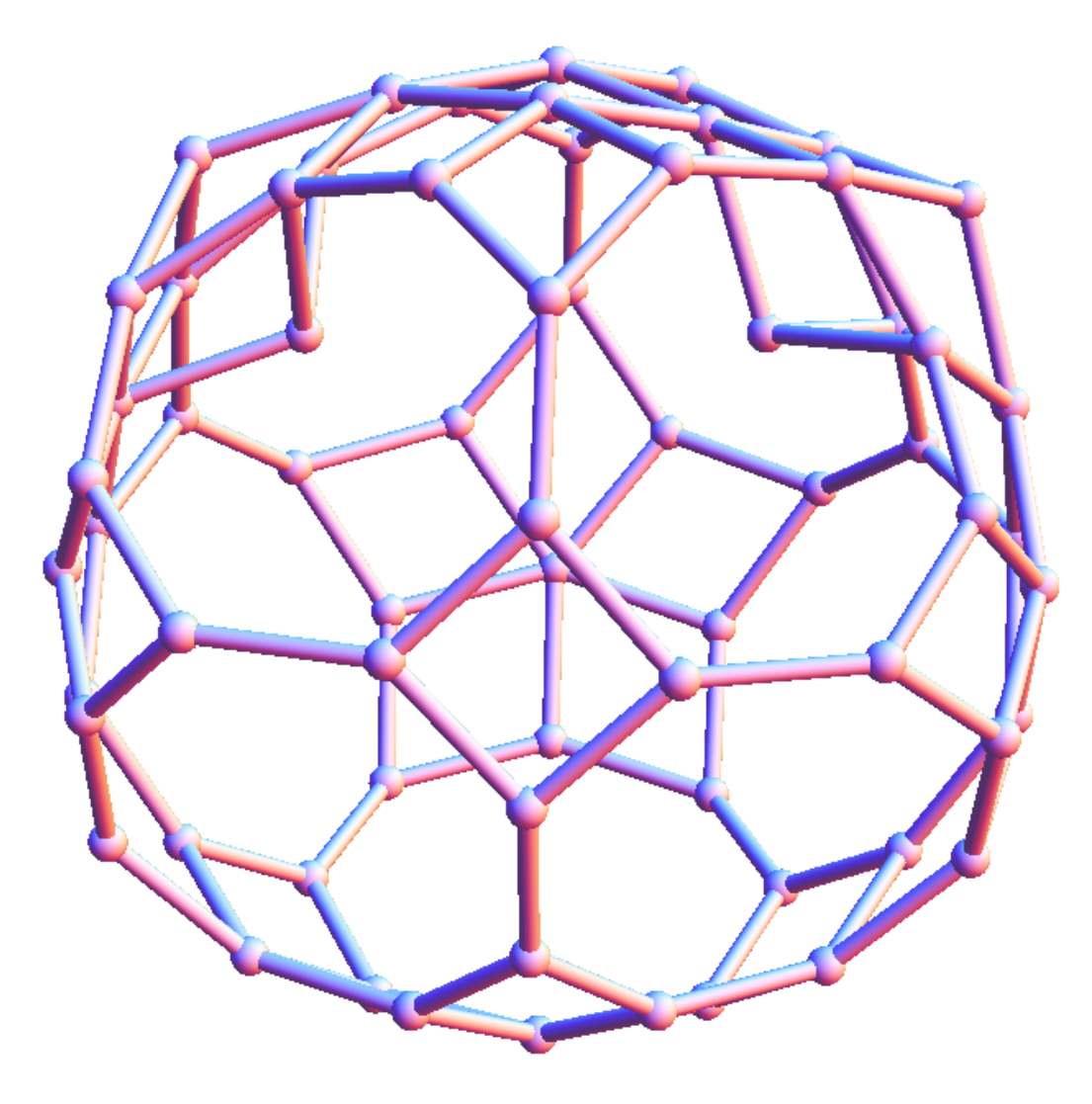}
\hspace{.5cm}
\includegraphics[scale=.5]{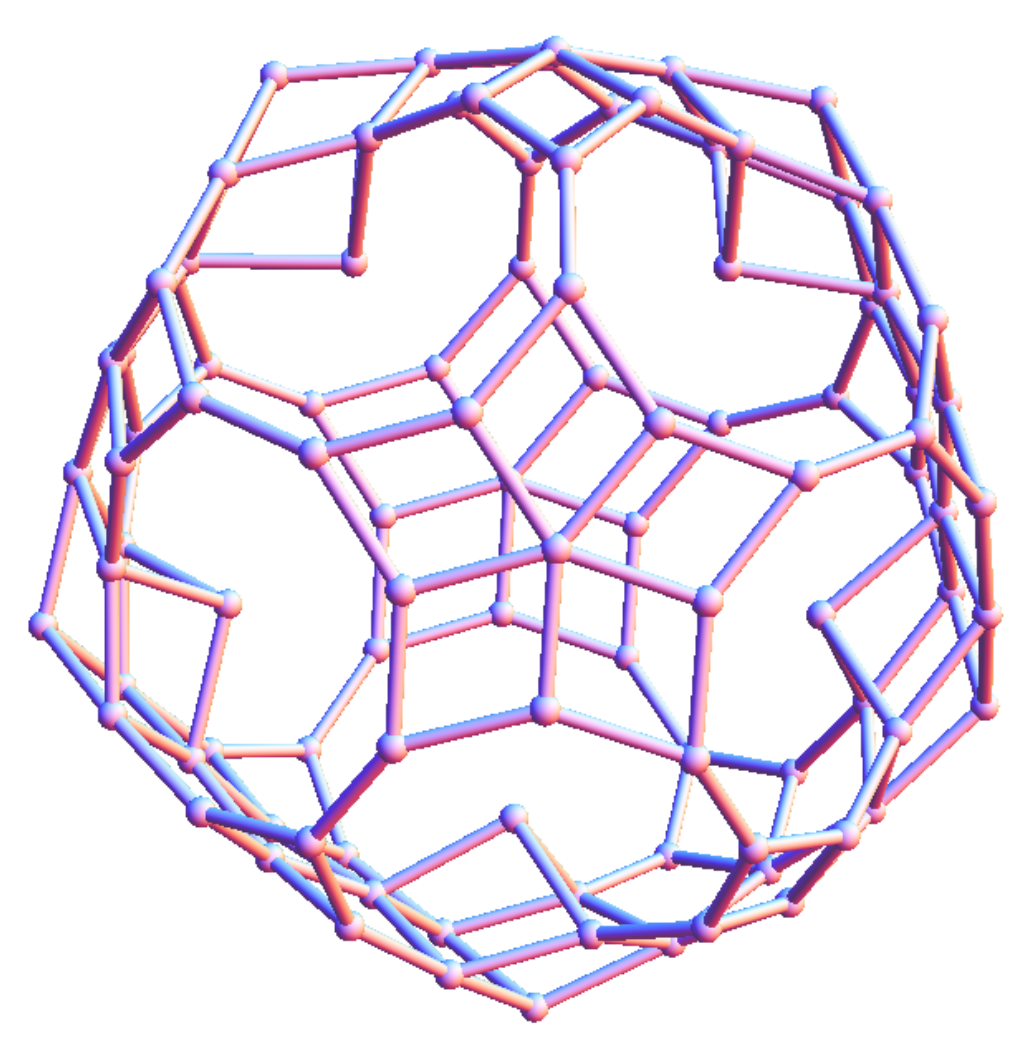}
\parbox{15cm}{
\caption{Polytope-like structures on which the higher Tamari orders\index{order!higher Tamari} $T(9,6)$ 
and $T(10,7)$ live. The existence of `small cubes' (cf. \cite{DM_Felsner+Ziegler01}, 
proof of Observation~5.3) indicates that these are \emph{not} polytopes. There are 
two of them in the left figure, and five in the right. \label{DM_fig:T96,T107polytopes} } 
}
\end{center}
\end{figure}

\section{An algebraic construction of higher Bruhat and Tamari orders}
\label{DM_sec:polygonal_rels}

\subsection{Higher Bruhat orders and simplex equations}
\label{DM_ssec:Bruhat_eqs}
Let $n,N$ be integers with $0<n<N$. Let $\mathcal{B}_{N,n}$ be the monoid 
generated by symbols $R_I$, $I \in { [N] \choose n }$, and 
$R_K$, $K \in { [N] \choose n+1 }$, subject to the following relations.
\begin{itemize}
\item[(a)]\hspace{.2cm} $R_{J} \, R_{J'} = R_{J'} \, R_{J} \;$ 
if $J,J' \in {[N] \choose k}$, $k \in \{n,n+1\}$, are such that $|J \cup J'| > k+1$. 
\item[(b)]\hspace{.2cm} $R_I \, R_K = R_K \, R_I \;\;$ if $I \not\subset K$.
\item[(c)]\hspace{.2cm} For $K =\{k_1,\ldots, k_{n+1}\}$, $k_1 < k_2 < \cdots < k_{n+1}$,  
\be
    R_{ K \setminus \{ k_{n+1} \} } \, R_{ K \setminus \{ k_n \} } \cdots 
    R_{ K \setminus \{ k_1\} } 
  = R_K \; \, R_{ K \setminus \{k_1 \} } \, R_{ K \setminus \{k_2 \} } \cdots 
    R_{ K \setminus \{k_{n+1}\} } \; .   \label{DM_Bruhat_R2}
\ee
\end{itemize}

Recall that, if two neighbors $I_j,I_{j+1} \in {[N] \choose n}$ in a linear order 
$\rho = (I_1,\ldots, I_s) \in A(N,n)$ (where $s={N \choose n}$)  
are not contained in a common packet (or, equivalently, if $|I_j \cup I_{j+1}| > n+1$), 
exchanging them leads to an elementarily equivalent linear order $\rho'$, i.e. $\rho \sim \rho'$. 
As a consequence of the above relations, the map that sends $\rho$ to the monomial 
$R_{I_1} R_{I_2} \cdots R_{I_s}$ induces a correspondence between equivalence classes 
$[\rho]$, and thus elements of the higher Bruhat order $B(N,n)$, and such monomials. 
Relation (c) encodes the order relations of $B(N,n)$. 
It relates a lexicographically ordered product to the reverse lexicographically 
ordered product. In the following, we write 
\bez
     R_{i_1 \ldots i_n} := R_{ \{i_1, \ldots, i_n\}}  \qquad \quad  i_1 < i_2 < \cdots < i_n \; .
\eez

For $n=1$, we have 
\bez
     R_i R_j = R_{ij} \; R_j R_i   \qquad \quad   i<j \, , 
\eez
and, for $i<j<k$, 
\bez
   (R_i R_j) \, R_k = R_{ij} \; R_j (R_i R_k)
                    = R_{ij} \; R_j \, R_{ik} \, R_k R_i
                    = R_{ij} R_{ik} \; (R_j R_k) \, R_i
                    = R_{ij} R_{ik} R_{jk} \; R_k R_j R_i \, ,
\eez
and also
\bez
   R_i \, (R_j R_k) = R_i \, R_{jk} \; R_k R_j
                    = R_{jk} \; (R_i R_k) \, R_j
                    = R_{jk} R_{ik} \; R_k \, (R_i R_j)
                    = R_{jk} R_{ik} R_{ij} \; R_k R_j R_i \; .
\eez
Associativity now leads to the \emph{consistency condition}\index{consistency condition}
\be
    R_{ij} R_{ik} R_{jk} = R_{jk} R_{ik} R_{ij}  \qquad \quad  i<j<k \; .  \label{DM_YB}
\ee
For $N=3$, this is $R_{12} R_{13} R_{23} = R_{23} R_{13} R_{12}$, which has the form of 
the \emph{Yang-Baxter equation}\index{Yang-Baxter equation} \cite{DM_Perk+Au-Yang06}.
(\ref{DM_YB}) is a special case of (\ref{DM_Bruhat_R2}) for $n=2$,
\bez
    R_{ij} R_{ik} R_{jk} = R_{ijk} \; R_{jk} R_{ik} R_{ij}  \qquad \quad i<j<k \; . 
\eez
This generalization of (\ref{DM_YB}) has been considered in particular in  \cite{DM_Maillet+Nijhoff90,DM_Korepanov93,DM_Korepanov97,DM_Hiet+Nijh97}.

For example, in $\mathcal{B}_{4,1}$, we have
\bez
  (R_1 R_2)  R_3 R_4 &=& R_{12} \; R_2  (R_1 R_3)  R_4 
                        = R_{12} R_{13} \; (R_2 R_3)  R_1 R_4
                        = R_{12} R_{13} R_{23} \; R_3 R_2  (R_1 R_4) \\
                       &=& R_{12} R_{13} R_{23} R_{14} \; R_3  (R_2 R_4)  R_1
                        = R_{12} R_{13} R_{23} R_{14} R_{24} \; (R_3 R_4)  R_2 R_1 \\
                       &=& R_{12} R_{13} R_{23} R_{14} R_{24} R_{34} \; R_4 R_3 R_2 R_1 \; .
\eez 
The product with which we started corresponds to the minimal element of $B(4,1)$, i.e. 
$(\{1\},\{2\},\{3\},\{4\})$. 
After a sequence of inversions, leading to new elements of $B(4,1)$, the 
resulting product $R_4 R_3 R_2 R_1$ corresponds to the maximal element 
$(\{4\},\{3\},\{2\},\{1\})$. 
The final sequence of $R_{ij}$'s represents an element of $A(4,2)$. In $\mathcal{B}_{4,2}$, 
we have
\bez
 & & (R_{12} R_{13} R_{23}) R_{14} R_{24} R_{34}
  = R_{123} \; R_{23} R_{13}  (R_{12} R_{14} R_{24})  R_{34} \\
 &=& R_{123} R_{124} \; R_{23} [R_{13} R_{24}] R_{14} [R_{12} R_{34}] 
  = R_{123} R_{124} \; R_{23} R_{24}  (R_{13} R_{14} R_{34})  R_{12} \\
 &=& R_{123} R_{124} R_{134} \; (R_{23} R_{24} R_{34})  R_{14} R_{13} R_{12}
  = R_{123} R_{124} R_{134} R_{234} \; R_{34} R_{24} R_{23} R_{14} R_{13} R_{12} \, ,
\eez
where square brackets indicate an application of a commutativity relation, and 
\bez
    R_{12} R_{13} [R_{23} R_{14}] R_{24} R_{34}
  = \ldots 
  = R_{234} R_{134} R_{124} R_{123} \; R_{34} R_{24} R_{23} R_{14} R_{13} R_{12} \; .
\eez
These calculations reproduce by stepwise inversions the two maximal chains of $B(4,2)$, 
starting with the 
minimal element $(\{1,2\},\{1,3\},\{2,3\},\{1,4\},\{3,4\})$, and ending with the 
maximal element, which is $(\{3,4\},\{1,4\},\{2,3\},\{1,3\},\{1,2\})$ (cf. Figure~\ref{DM_fig:B42Qs}).
As a consequence of associativity, we obtain the consistency condition\index{consistency condition}
\bez
    R_{123} R_{124} R_{134} R_{234} = R_{234} R_{134} R_{124} R_{123} \; .
\eez
The two sides of this equation represent the two elements of $A(4,3)$. In $\mathcal{B}_{4,3}$, 
it generalizes to
\bez
    R_{123} R_{124} R_{134} R_{234} = R_{1234} \; R_{234} R_{134} R_{124} R_{123} \; .
\eez

We expect that, more generally, the consistency conditions\index{consistency condition} 
of (\ref{DM_Bruhat_R2}) are given by
\be
    R_{ L \setminus \{ l_{n+2} \} } \, R_{ L \setminus \{ l_{n+1} \} } \cdots 
    R_{ L \setminus \{ l_1 \} } 
  = R_{ L \setminus \{l_1\} } \, R_{ L \setminus \{l_2\} } \cdots 
    R_{ L \setminus \{l_{n+2}\} }    \label{DM_R-consistency}
\ee
for $L = \{l_1,\ldots,l_{n+2}\} \in {[N] \choose n+2}$ in linear order. 
These consistency conditions\index{consistency condition} have the form of generalized 
Yang-Baxter equations\index{Yang-Baxter equation}, the so-called 
\emph{simplex equations}\index{simplex equation}
\cite{DM_Zamolodchikov81,DM_Bazh+Stro82,DM_Frenkel+Moore91,DM_Lawrence95,DM_Lawrence97}, 
see the following remark. 
A derivation of these equations as consistency conditions\index{consistency condition}, 
in the way described above,  
apparently first appeared in \cite{DM_Maillet+Nijhoff90} (called `obstruction method' 
in \cite{DM_Michi+Nijh93,DM_Hiet+Nijh97}). 

\begin{remark}
\label{DM_rem:YB}
For given positive integers $n<N$, we choose two finite-dimensional 
vector spaces $V,W$, and $S_I \in  V \otimes \mathrm{End}(W)$, 
$I \in {[N] \choose n}$, with the property
\bez
    S_I \tilde{\otimes} S_{I'} = P \, S_{I'} \tilde{\otimes}  S_I  \qquad
    \mbox{if} \quad |I \cup I'| > n+1 \, ,
\eez
where $P(w \tilde{\otimes} w') = w' \tilde{\otimes} w$, and 
$\tilde{\otimes}$ denotes the tensor product over $\mathrm{End}(W)$. 
For $K = \{ k_1, \ldots, k_{n+1} \} \in {[N] \choose n+1}$, $k_1 < \ldots < k_{n+1}$, 
we write (\ref{DM_Bruhat_R2}) in the form
\bez
    S_{ K \setminus \{ k_{n+1} \} } \tilde{\otimes}  S_{ K \setminus \{ k_n \} } 
    \tilde{\otimes}  \cdots 
    \tilde{\otimes}  S_{ K \setminus \{ k_1\} } 
  = R \; S_{ K \setminus \{k_1 \} } \tilde{\otimes}   S_{ K \setminus \{k_2 \} } 
    \tilde{\otimes}  \cdots \tilde{\otimes}  S_{ K \setminus \{k_{n+1}\} } \, , 
\eez
with some $R \in \mathrm{End}(\otimes^{n+1} W)$. In components, this takes the form
\bez
   S^{a_{n+1}}_{ K \setminus \{ k_{n+1} \} } \, S^{a_n}_{ K \setminus \{ k_n \} } 
   \cdots S^{a_1}_{ K \setminus \{ k_1\} } 
  = \sum_{b_1,\ldots,b_{n+1}} {R\,}^{a_{n+1} \ldots a_1}_{b_{n+1} \ldots b_1} 
    \; S^{b_1}_{ K \setminus \{k_1 \} } \, S^{b_2}_{ K \setminus \{k_2 \} } \cdots 
    S^{a_{n+1}}_{ K \setminus \{k_{n+1}\} } \; .
\eez
The consistency conditions\index{consistency condition} are the 
$(n+1)$-simplex equations\index{simplex equation}
\bez
   R_{ L \setminus \{ l_{n+2} \} } \, R_{ L \setminus \{ l_{n+1} \} } 
   \cdots R_{ L \setminus \{ l_1\} } 
 = R_{ L \setminus \{l_1 \} } \, R_{ L \setminus \{l_2 \} } \cdots 
    R_{ L \setminus \{l_{n+2}\} } \, , 
\eez
for all $L = \{l_1,\ldots,l_{n+2}\} \in {[N] \choose n+2}$, 
$l_1 < l_2 < \cdots < l_{n+2}$. Here $R_K \in \mathrm{End}(\otimes^N V)$ is given by $R$ 
acting non-trivially only on factors of the $N$-fold tensor product of $V$ at those positions 
that are given by the numbers $k_1, \ldots, k_{n+1}$.
\end{remark}

\begin{remark}
\label{DM_rem:classical_simplex_eqs}
Assuming an extension of the monoid $\mathcal{B}_{N,n}$ to a unital ring of formal 
power series in an indeterminate $\epsilon$, and an expansion
\bez
    R_K = \mathbf{1} + \epsilon \, \mathfrak{r}_K + \cdots 
\eez
in powers of $\epsilon$, from (\ref{DM_R-consistency}) we obtain to first nontrivial 
order (which is $\epsilon^2$) 
\bez
    \sum_{1 \leq a < b \leq n+2} [ \mathfrak{r}_{L \setminus \{l_b\}} \, , \, 
        \mathfrak{r}_{L \setminus \{l_a\}} ]  = 0 \, ,
\eez
which has the form of a \emph{classical simplex equation} \cite{DM_Frenkel+Moore91}. 
For $N=3$, this is the \emph{classical Yang-Baxter equation} 
\bez
    [\mathfrak{r}_{12} , \mathfrak{r}_{13}] + [\mathfrak{r}_{12} , \mathfrak{r}_{23}] 
  + [\mathfrak{r}_{13} , \mathfrak{r}_{23}] = 0 \; . 
\eez
The `classical limit' does not work, however, on the level of the `obstruction equations' 
(\ref{DM_Bruhat_R2}). 
\end{remark}

\subsection{Equations associated with higher Tamari orders}
 Let $\mathcal{T}_{N,n}$ be the monoid obtained from $\mathcal{B}_{N,n}$ by 
replacing (\ref{DM_Bruhat_R2}) with the following \emph{$(n+1)$-gonal relations}, 
\be
  T_{ K \setminus \{ k_{n+1} \} } \, T_{ K \setminus \{ k_{n-1} \} } \cdots 
  T_{ K \setminus \{ k_{1+(n \, \mathrm{mod} \,2)} \} } 
  = T_K \;\, T_{ K \setminus \{k_{2 -(n \, \mathrm{mod} \,2)} \} } \cdots 
    T_{ K \setminus \{k_{n-2}\} } \, T_{ K \setminus \{k_n\} } \; .
         \label{DM_Tamari_(n+1)-gonal}
\ee
This means that we omit all factors in (\ref{DM_Bruhat_R2}) with a non-visible index set. 

Assuming that the corresponding statement for $\mathcal{B}_{N,n}$ holds, it 
follows that these relations imply the \emph{consistency conditions}\index{consistency condition}
\bez
  T_{ L \setminus \{ l_{n+2} \} } \, T_{ L \setminus \{ l_{n} \} } \cdots 
  T_{ L \setminus \{ l_{2+(n \, \mathrm{mod} \,2)} \} } 
  = T_{ L \setminus \{l_{1+(n \, \mathrm{mod} \,2)} \} } \cdots 
    T_{ L \setminus \{l_{n-1}\} } \, T_{ L \setminus \{l_{n+1}\} } 
\eez
for all $L = \{l_1,\ldots,l_{n+2}\} \in {[N] \choose n+2}$ in linear order. These are 
special $(n+2)$-gonal relations. For $n=1,2,3,4$, they are listed in the following 
table.
\bez
\begin{array}{c|@{\quad}c@{\quad}|@{\quad}c}
n & (n+1)-\mbox{gonal relations} & \mbox{consistency conditions} \\ 
\hline
1 & \Theta_i = X_{ij} \, \Theta_j & X_{ij} \, X_{jk} = X_{ik}  \\
2 & X_{ij} X_{jk} = Y_{ijk} \, X_{ik} & Y_{ijk} Y_{ikl} = Y_{jkl} Y_{ijl} \\
3 & Y_{ijk} Y_{ikl} = T_{ijkl} \, Y_{jkl} Y_{ijl} &  T_{ijkl} T_{ijlm} T_{jklm} = T_{iklm} T_{ijkm} \\
4 & T_{ijkl} T_{ijlm} T_{jklm} = S_{ijklm} \, T_{iklm} T_{ijkm} & 
     S_{ijklm} S_{ijkmq} S_{iklmq} = S_{jklmq} S_{ijlmq} S_{ijklq}
\end{array}
\eez
Here we demand that the indices are ordered such that e.g. $i<j$ for $n=1$ and $i<j<k<l<m<q$ for $n=4$, 
and we set
\bez
  \Theta_i = T_{\{i\}} \, , \quad
  X_{ij} = T_{\{i,j\}} \, , \quad
  Y_{ijk} = T_{\{i,j,k\}} \, , \quad
  T_{ijkl} = T_{\{i,j,k,l\}} \, , \quad
  S_{ijklm} = T_{\{i,j,k,l,m\}} \; .
\eez

 For example, for $n=3$, the tetragonal relations imply
\bez
    ( Y_{ijk} Y_{ikl} ) Y_{ilm} &=& T_{ijkl} Y_{jkl} Y_{ijl} Y_{ilm}
                                 = T_{ijkl} Y_{jkl} (Y_{ijl} Y_{ilm})
                                 = T_{ijkl} Y_{jkl} T_{ijlm} Y_{jlm} Y_{ijm} \\
                                &=& T_{ijkl} T_{ijlm} (Y_{jkl} Y_{jlm}) Y_{ijm} 
                                 = T_{ijkl} T_{ijlm} T_{jklm} Y_{klm} Y_{jkm} Y_{ijm} \\
  Y_{ijk} ( Y_{ikl} Y_{ilm} ) &=& Y_{ijk} T_{iklm} Y_{klm} Y_{ikm} 
                               = T_{iklm} Y_{ijk} Y_{klm} Y_{ikm}
                               = T_{iklm} Y_{klm} (Y_{ijk} Y_{ikm}) \\
                              &=& T_{iklm} Y_{klm} T_{ijkm} Y_{jkm} Y_{ijm} 
                               = T_{iklm} T_{ijkm} Y_{klm} Y_{jkm} Y_{ijm}   \, ,
\eez
where $i<j<k<l<m$, and we obtain the consistency conditions\index{consistency condition} 
for the $T$'s.

For $N=5$, the indices of $Y_{ijk}$ determine the three vertices of a triangle 
inside a pentagon, whose vertices are numbered consecutively counterclockwise
from $1$ to $5$. A product of three $Y$'s of the kind that appears in 
the above computations corresponds to a triangulation of the pentagon, 
see Figure~\ref{DM_fig:triang_pentagon}. The above computation starts with 
$Y_{ijk} Y_{ikl} Y_{ilm}$, i.e. the triangulation of the top vertex, and the computation 
proceeds step by step along the left, respectively right maximal chain. 

\begin{SCfigure}[1.6][hbtp]
\hspace{.5cm}
\includegraphics[scale=.5]{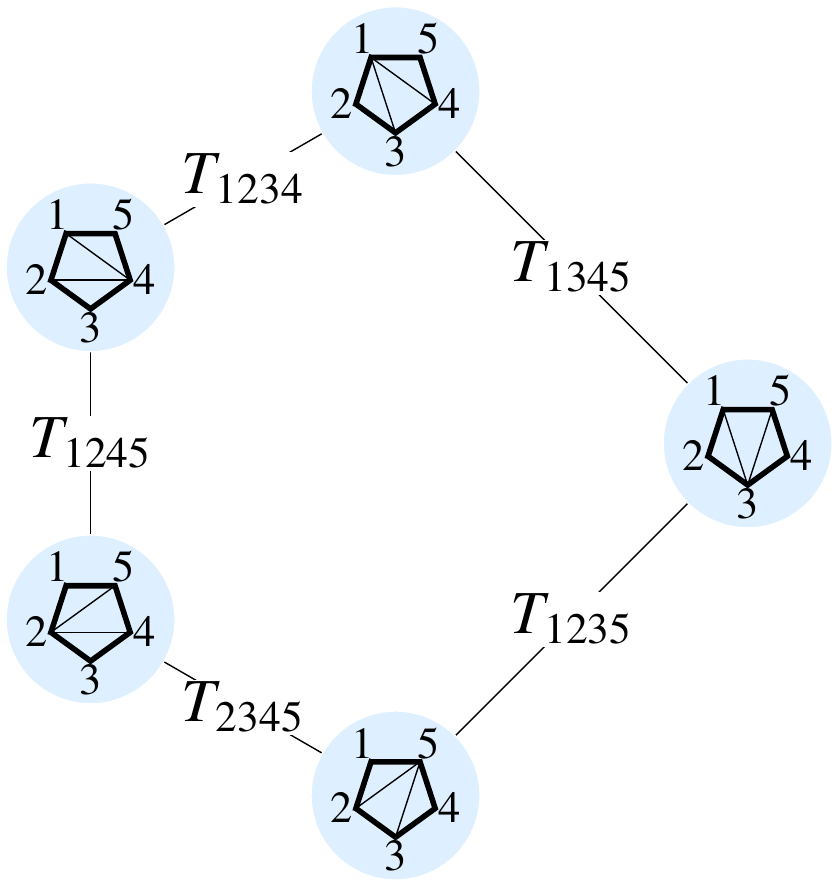} 
\hspace*{1.cm}
\caption{The two different ways of computing a product of three $Y$'s 
correspond to the maximal chains of the pentagonal lattice, which is 
the Tamari lattice\index{Tamari!lattice} $\mathbb{T}_3 = T(5,3)$. }
\label{DM_fig:triang_pentagon}  
\end{SCfigure} 

\vskip.1cm
A particularly interesting aspect is that we can construct any of the 
Tamari orders\index{order!higher Tamari} $T(N,n)$ 
by starting from $X_{12} X_{23} \cdots X_{N-1,N}$,
which corresponds to the minimal ele\-ment of the triangulations of the cyclic polytope 
$C(N,1)$ (see \cite{DM_Rambau+Reiner11}). Applying the trigonal relations, 
we obtain from it 
\bez
 (\cdots(X_{12} X_{23}) \cdots ) X_{N-2,N-1}) X_{N-1,N} 
 = Y_{123} Y_{134} Y_{145} \cdots Y_{1,N-1,N} \; X_{1,N} \; .
\eez
The chain of $Y$'s corresponds 
to the minimal element of the Tamari lattice $\mathbb{T}_{N-2} = T(N,3)$. Elaborating all 
possible proper bracketings of this chain, by applications of the tetragonal relations, one 
obtains the maximal chains of $T(N,3)$. For $N=6$, 
\bez
   (Y_{123} Y_{134}) Y_{145} Y_{156} = \ldots 
 = T_{1234} T_{1245} T_{2345} T_{1256} T_{2356} T_{3456} \; Y_{456} Y_{346} Y_{236} Y_{126}
     \, ,
\eez
produces the coefficient $T_{1234} T_{1245} T_{2345} T_{1256} T_{2356} T_{3456}$, 
which we recognize as one of the longest maximal chains of the 
Tamari lattice\index{Tamari!lattice} $\mathbb{T}_4$ in Figure~\ref{DM_fig:T4}. 
This computation moves step by step along that chain,  
from one tree to the next, since each product of four $Y$'s appearing in the computation 
determines the tree (which is dual to a triangulation of the $6$-gon) assigned to the respective vertex. 
In conclusion, we have an algebraic method to construct Tamari lattices and, more generally, 
higher Tamari orders\index{order!higher Tamari}. 
\vskip.1cm

Remark~\ref{DM_rem:YB} can be translated to the present setting. The significance of the 
equations obtained in this way has still to be explored, but an interesting observation is made 
in the following remark. 

\begin{remark}
Assuming an extension of the monoid $\mathcal{T}_{N,n}$ to a (unital) ring of formal 
power series in an indeterminate $\epsilon$, and an expansion
\bez
    T_I = \mathbf{1} + \epsilon \, \mathfrak{t}_I + \cdots \, ,
\eez
to first order in $\epsilon$, we obtain from (\ref{DM_Tamari_(n+1)-gonal}) the coboundary conditions
\bez
    \mathfrak{t}_K = (\delta \mathfrak{t})_K \quad \mbox{where} \quad
    (\delta \mathfrak{t})_K := \mathfrak{t}_{K\setminus \{k_{n+1}\}} - \mathfrak{t}_{K\setminus \{k_n\}} 
    + \mathfrak{t}_{K\setminus \{k_{n-1}\}} - \cdots + (-1)^n \, \mathfrak{t}_{K\setminus \{k_1\}} \; .
\eez
The consistency conditions are then a consequence of $\delta^2 =0$. In contrast to the `Bruhat case' 
considered in section~\ref{DM_ssec:Bruhat_eqs}, see Remark~\ref{DM_rem:classical_simplex_eqs}, 
in the present `Tamari case' all equations possess a (formal) `classical limit', which moreover 
turns out to be `cohomological'.
\end{remark}

\section{Further remarks}
\label{DM_sec:remarks}
The KP equation\index{KP!equation} (and its hierarchy)\index{KP!hierarchy} has connections 
with various areas of mathematics and (mathematical) physics. 
The relation with higher Bruhat\index{order!higher Bruhat} and higher 
Tamari orders\index{order!higher Tamari} established 
in \cite{DM_DMH11KPT} and in the present work provides another example.

We did not explore the precise relation of the higher Tamari orders\index{order!higher Tamari} 
introduced in this work and the `higher Stasheff-Tamari posets' of Kapranov and Voevodsky  \cite{DM_Kapr+Voev91} (also see \cite{DM_Edel+Rein96,DM_Rambau+Reiner11}). It may well be 
that these orders coincide, which is indeed so for low order examples. 

The higher Bruhat orders\index{order!higher Bruhat} and the induced Tamari 
orders\index{order!higher Tamari} are supplied with a higher-dimensional 
category structure via the evolution variables of the KP hierarchy\index{KP!hierarchy}. 
It has obvious relations to Ross Street's algebra of oriented simplexes \cite{DM_Street87} 
(also see \cite{DM_Kapr+Voev91}). 

\vskip.2cm
\noindent
\textbf{Acknowledgement}. 
We would like to thank Jim Stasheff for very valuable 
comments and questions that led to several improvements of this work, 
and for his efficient proofreading.

\providecommand{\bysame}{\leavevmode\hbox to3em{\hrulefill}\thinspace}
\providecommand{\MR}{\relax\ifhmode\unskip\space\fi MR }
\providecommand{\MRhref}[2]{%
  \href{http://www.ams.org/mathscinet-getitem?mr=#1}{#2}
}
\providecommand{\href}[2]{#2}

\end{document}